\documentclass{amsart}
\usepackage{amsmath}
\usepackage{amsthm}
\usepackage{amsfonts}
\usepackage[foot]{amsaddr}
\usepackage{amssymb}
\usepackage{bbm}
\usepackage{graphicx}
\usepackage{subcaption}
\usepackage{natbib}
\usepackage{enumitem}
\usepackage{tikz-cd}
\usepackage{comment}
\usepackage[text={440pt,575pt},headheight=9pt,centering]{geometry}
\usepackage[textwidth=2.8cm]{todonotes} 
\makeatletter
\newcommand{\DefineTodoColor}[2]{%
	\define@key{todonotes}{#1}[]{\gdef\@todonotes@currentbackgroundcolor{#2}}%
}
\makeatother

\usepackage{hyperref}
\hypersetup{
	colorlinks=true,
	linkcolor=blue,
	filecolor=magenta,
	urlcolor=cyan,
	citecolor=teal
}

\usepackage{tikz}
\usetikzlibrary{positioning}

\tikzset{
  node/.style = {circle, draw, thick, minimum size=8mm, inner sep=0pt},
  edge/.style = {-, line width=0.9pt}
}

\allowdisplaybreaks

\newcommand{\p}{\mathbf{p}}
\newcommand{\x}{\mathbf{x}}
\newcommand{\y}{\mathbf{y}}

\newcommand{\X}{\mathbf{X}}
\newcommand{\Y}{\mathbf{Y}}

\newcommand{\M}{\mathcal{M}}

\newcommand{\RR}{\mathbb{R}}
\renewcommand{\S}{\mathbb{S}}

\newcommand{\PP}{\mathbb{P}}

\newcommand{\CC}{\mathbb{C}}

\newcommand{\Var}{\operatorname{Var}}
\newcommand{\Det}{\operatorname{Det}}
\newcommand{\CM}{\operatorname{CM}}

\newcommand{\indep}{\perp \!\!\! \perp}
 
\newcommand{\bs}{\boldsymbol}

\newcommand{\HR}{H\"usler--Reiss }
\DeclareMathOperator*{\tr}{\operatorname{tr}}

\newcommand{\eci}{{\rm eCI}}

\newcommand{\emld}{{\rm eMLD}}
\newcommand{\mld}{{\rm MLD}}
\newcommand{\emlt}{{\rm eMLT}}

\newcommand{\ones}{\mathbf{1}}
\newcommand{\zeros}{\mathbf{0}}
\newcommand{\bfe}{\mathbf{e}}

\newcommand{\bfx}{\mathbf{x}}
\newcommand{\bfy}{\mathbf{y}}

\newcommand{\sgn}{\mathrm{sgn}}
\newcommand{\rank}{\mathrm{rank}}
\newcommand{\tw}{{\rm tw}}

\newtheorem{thm}{Theorem}[section]
\newtheorem{prop}[thm]{Proposition}
\newtheorem{cor}[thm]{Corollary}
\newtheorem{lemma}[thm]{Lemma}

\newtheorem{conj}[thm]{Conjecture}

\theoremstyle{definition}
\newtheorem{defi}[thm]{Definition}
\newtheorem{ex}[thm]{Example}

\begin{document}
	\title[Algebraic statistics of H\"usler--Reiss graphical models in multivariate extremes]{Algebraic statistics of H\"usler--Reiss graphical models \\ in multivariate extremes}
  \author{Carlos Am\'endola\textsuperscript{1}}
        \address{\textsuperscript{1}Institute of Mathematics, Technische Universität Berlin, Germany}
        \author{Jane Ivy Coons\textsuperscript{2}} 
        \address{\textsuperscript{2}Department of Mathematical Sciences, Worcester Polytechnic Institute, Worcester, MA, USA}
        \author{Alexandros Grosdos\textsuperscript{3}} 
        \address{\textsuperscript{3}Institute of Mathematics, University of Augsburg, Augsburg, Germany}
        \author{Frank R\"ottger\textsuperscript{4}}
        \address{\textsuperscript{4}Department of Applied Mathematics, University of Twente, Enschede, The Netherlands}

\begin{abstract}
    The field of extreme value statistics is concerned with modeling and predicting rare events.
    In a Hüsler--Reiss graphical model, a graph represents extremal conditional independence (CI) relations between random variables. These models are exponential families parameterized by a graph Laplacian and are considered the analogue of multivariate Gaussian models in the extremal setting. We study these models from the perspective of algebraic geometry. Translating the CI relations into polynomial constraints in the parameters, we define extremal CI ideals and find a determinantal representation of their generators. 
    In terms of parametric inference, we study the extremal maximum likelihood degree as the number of solutions to a conditionally negative definite matrix completion problem. We also define and analyze the extremal maximum likelihood threshold for Hüsler--Reiss graphical models, which provides a certificate for the existence of a surrogate MLE in terms of the dimensionality of the point configuration that realizes the underlying summary statistic as a Euclidean distance matrix. We highlight throughout many interesting similarities but also differences with respect to Gaussian graphical models.
\end{abstract}
        \maketitle

\section{Introduction}

Algebraic statistical methodology has proven remarkably suitable for tackling problems in 
Gaussian graphical modeling \citep{sullivant2018,handbook}. 
This is because of the combinatorial features of the models
and the natural translation of statistical statements into polynomial (in)equalities.
Indeed, graphical modeling has been established as one of the pillars of algebraic statistics and remains 
a very active field of research,
see for instance \citet{nested20, toric21,thirdmoments}, to name a few.
Exciting recent advances in multivariate extremes have managed to connect graphical models with the theory of extremes in statistics.
The breakthrough paper of \citet{EH2020} sparked a vibrant line of research that allows for both powerful methodology and interpretable modeling of extreme dependence structures.  
In this paper,
we propose a new direction of research that aims to bring in algebraic and geometric machinery to better understand extremal graphical models.  

A popular approach for multivariate extremes are threshold exceedances, 
where we consider a multivariate observation as extreme when at least one dimension exceeds a very high threshold.
Here an important parametric family is the \emph{Hüsler--Reiss distribution}. 
For $d$-dimensional random vectors, 
this exponential family is parameterized by weighted graph Laplacians on a $d$-vertex graph. 
The Hüsler--Reiss family is considered an extremal analogue of the multivariate Gaussian, 
where the weighted Laplacian is the analogue of the precision matrix of a Gaussian distribution.  
Via the Fiedler identity \citep{devriendt2020effective}, 
these Laplacian matrices are in bijection with $d \times d$ variogram matrices, 
see e.g.~\citet[Appendix A]{REZ2021}. 
A variogram matrix is a symmetric matrix $\Gamma$ whose entries correspond to squared Euclidean distances between the vertices of a $(d-1)$-simplex, and can thus also be interpreted as Euclidean distance matrices.

Crucial to developing an algebraic theory is the observation 
that the variogram matrix $\Gamma$ acts as an analogue of the covariance matrix in the Gaussian setting.
Recall that for a Gaussian random vector $\X$ with covariance matrix $\Sigma$, 
the conditional independence statement 
$\X_A\indep \X_B \mid \X_C$
for any disjoint subsets $A,B,C\subset [d]$
holds exactly when the submatrix $\Sigma_{A \cup C, B\cup C}$ has rank $\#C$, see \citet[Proposition 3.1.13]{DSS2009}.
This translates algebraically to polynomial equations in the entries $\sigma_{ij}$ of the covariance matrix, 
coming from all minors of size $\#C+1$.
Thus one defines the conditional independence ideal of a graphical model to be the ideal generated by the above matrix minor polynomials for all conditional independence statements coming from the graph.

The standard notion of conditional independence is not applicable for a H\"usler--Reiss vector $\Y$, as its support is not a product space.
\citet{EH2020} proposed an alternative notion, where for any disjoint $A,B,C\subset [d]$ we say that $\Y_A$ is \emph{extremal conditionally independent} of $\Y_B$ given $\Y_C$ ($\Y_A\perp_e \Y_B\mid \Y_C$) if the corresponding classical conditional independence statements hold for $\Y$ restricted to all positive halfspaces, see Definition~\ref{def:extremalCI} for details. 

Resembling the Gaussian, it has been shown that extremal conditional independence for H\"usler--Reiss distributions can be encoded parametrically \citep{HES2022,DER2026}. 
In particular, it was established by \citet[Proposition 3.3]{FJA2023}
that
\begin{equation*}
    Y_i\perp_e Y_j\vert \Y_C \; \Longleftrightarrow \; \det \begin{pmatrix}
        -\frac{1}{2}\Gamma_{\{i\}\cup C,\{j\}\cup C} & \mathbf{1}\\
        \mathbf{1}^\top  & 0\\
    \end{pmatrix}=0
\end{equation*}
for any singletons $i,j\in [d]$ and disjoint $C\subset [d]$.
We generalize this result for all extremal conditional independence statements of the general form $\Y_A \perp_e \Y_B \vert \Y_C$ in Theorem \ref{thm:CMMinors}.
In particular, we show that such conditional independence statements are equivalent to $\mathrm{rank}(\CM(\Gamma_{A \cup C, B \cup C})) = \#C + 1$, 
where the \emph{Cayley-Menger matrix} is defined as
$\CM(\Gamma):=\left(\begin{smallmatrix}
        -\Gamma/2 & \mathbf{1}\\
        \mathbf{1}^\top  & 0\\
    \end{smallmatrix}\right).$

The rank restrictions on the Cayley-Menger matrix readily translate to polynomial equations in the entries of $\Gamma$,
prompting the definition of the extremal conditional independence ideal of a graph~$G$
\[
\eci_G := \sum_{A \perp_e B \mid C} \langle (\#C + 2) \times (\#C + 2) \text{ minors of } \CM(\Gamma_{A \cup C, B \cup C}) \rangle.
\]
Interestingly, we show in Theorem \ref{thm:CMGenerators} that this ideal is already generated by  
\begin{equation*}
    \langle \det \CM(\Gamma_{A', B'}) \mid A' \subset A \cup C, \ B' \subset B \cup C, \ \#A' = \#B' = \#C + 1 \rangle.
    \end{equation*}

For Gaussian data,
the problem of maximum likelihood estimation in Gaussian graphical models 
can be reformulated as a positive-definite 
matrix completion problem \citep{dempster1972covariance}. 
A similar approach provides a surrogate maximum likelihood estimator in H\"usler--Reiss graphical models \citep{HES2022}.
Here, for data arising from a H\"usler--Reiss distribution (or, more realistically, in its domain of attraction),
one can obtain an empirical variogram matrix $\overline{\Gamma}$ \citep{EV2020}.
For a given undirected graph $G$,
the resulting matrix completion problem asks whether the partial matrix $\Gamma$ with entries $\Gamma_{ij} = \overline{\Gamma}_{ij}$ for all edges $(i,j)$ of $G$
admits a Euclidean distance matrix completion that is 
compatible with the model induced by $G$,
i.e., a completion such that the corresponding \HR precision matrix has zeros for all nonedges of $G$. 
This is a semialgebraic problem,
consisting of algebraic equations coming from the structural zeros
and inequalities arising from restricting to Euclidean distance matrices.

Another main contribution we make is studying the \emph{extremal maximum likelihood degree} of the Euclidean distance matrix completion problem, that is, the number of complex solutions of the underlying algebraic equations.
This model invariant is a direct analogue of the Gaussian maximum likelihood degree, first studied by \citet{sturmfels2010multivariate,GeometryML12}, and has led to a fruitful line of research in algebraic statistics (e.g.~\citet{coons2020maximum,michalek2021maximum,amendola2024maximum}). It turns out that models defined by chordal graphs have both ML degree and extremal ML degree one, and we provide a closed formula for the surrogate MLE in Theorem \ref{thm:chordalmld1}.
Furthermore, we show that the extremal ML degree factorizes for graphs that are formed by gluing together two graphs over a common clique. 
In examples, we see that the extremal ML degree can be smaller or equal to the Gaussian ML degree, and we conjecture (Conjecture \ref{conj:degree}) that this is always the case. 

Finally, we introduce an \emph{extremal maximum likelihood threshold}, 
giving rank conditions on the summary statistic such that a solution of the completion problem generically exists. This is an analogue of the classical maximum likelihood threshold for Gaussian graphical models \citep{gross2018maximum}.
This threshold is important as it describes conditions under which the maximum likelihood estimator exists almost surely. 
For a Gaussian distribution such rank conditions are with respect to the empirical covariance matrix, 
while for a H\"usler--Reiss distribution the extremal maximum likelihood threshold arises from constraints on an empirical variogram matrix.
The two cases seem to be closely related, 
since the bounds given by the maximum clique number and the treewidth in Theorem \ref{thm:ineqGaussian}, which are well known in the Gaussian case, 
hold analogously for the extremal case
by decreasing bounds by $1$, see Theorem~\ref{thm:ineqHuesler}.

The paper is structured as follows. In Section~\ref{sec:prelim} we recall the necessary background on variograms, extremal conditional independence, and Hüsler-Reiss distributions. Our first results characterizing the extremal conditional independence ideal, Theorems \ref{thm:CMMinors} and \ref{thm:CMGenerators}, are presented in Section~\ref{sec:CIideal}. In Section \ref{sec:MLdegree} we study the maximum likelihood estimation problem, giving an explicit description of the extremal MLE for chordal graphs in Theorem \ref{thm:chordalmld1} and proving the multiplicativity of the extremal ML degree in Theorem \ref{thm:multiplicative}. Extremal maximum likelihood thresholds are the topic of Section~\ref{sec:mlt}, where we give bounds in Theorem \ref{thm:ineqHuesler} and an algebraic criterion in Theorem \ref{thm:zeroidealcriterion}. In the Appendix we collect some useful properties of variogram matrices.

\section{Preliminaries}
\label{sec:prelim}

We begin our introduction to H\"usler-Reiss graphical models by reviewing fundamental facts and definitions about Laplacian matrices, variogram matrices (or Euclidean distance matrices), and the relationship between the two of them. In Section \ref{sec:ExtremalCI}, we introduce multivariate Pareto distributions and define a notion of \emph{extremal conditional independence} in this general setting. Finally in Section \ref{sec:HR} we define the central objects of study in this work, H\"usler-Reiss graphical model. These are examples of multivariate Pareto distributions that are exponential families parameterized by a graph Laplacian which makes them amenable to study through an algebraic lens.

\subsection{Variograms and Laplacians}

We denote by $\S^d$ the set of all real symmetric $d\times d$ matrices, $\S_0^d\subset\S^d$ the subset of matrices with zero diagonal. We use $\S^d_{\succeq}\subseteq\S^d$ to denote the subset of positive semidefinite matrices and $
\S^d_{\succ}$ for the positive definite matrices. 
Let $G=(V,E)$ be a simple undirected graph with vertex set $V$ and edge set $E\subset V\times V$, where typically $V=[d]=\{1,\ldots,d\}$.
We endow the graph $G$ with a weighted adjacency matrix $Q\in \S^{d}_0$, such that for each $(i,j)\not\in E$, the $ij$th entry $Q_{ij}=0$.
Let $D$ be the $d\times d$ diagonal matrix with $D_{ii}=\sum_{i\neq j}Q_{ij}$.
We call $\Theta=D-Q$ a \emph{signed Laplacian matrix} and denote by $\mathbb{U}^d=\{\Theta\in \S^d_\succeq:\Theta\ones=\mathbf{0}\}$
the set of all positive semidefinite signed Laplacians.
We denote by $\Sigma=\Theta^+$ the Moore--Penrose pseudoinverse of the signed Laplacian $\Theta$.
Let $\mathbf{p}=\frac{1}{2}\Theta d_\Sigma+\frac{1}{d}\mathbf{1}$ and $R^2=\frac{1}{4}d_\Sigma^\top\Theta d_\Sigma +\frac{1}{d}\mathbf{1}^\top d_\Sigma$, where $d_\Sigma$ denotes the diagonal of $\Sigma$ in vector notation.
The \emph{Fiedler--Bapat identity} \citep{DevriendtPhD} states that
\begin{equation}\label{eq:FiedlerBapat}
\begin{pmatrix}
    \Theta &\p\\
    \p^\top & R^2\\
\end{pmatrix}^{-1} = 
    \begin{pmatrix}
        -\Gamma/2 & \mathbf{1}\\
        \mathbf{1}^\top & 0\\ 
    \end{pmatrix} =:\CM(\Gamma)
\end{equation}
where $\Gamma$ is a Euclidean distance or variogram matrix.  Given a configuration of $d$ points in a Euclidean space, their  \emph{Euclidean distance matrix} is the symmetric $d \times d$ matrix whose $ij$ entry is the squared Euclidean distance between the $i$th and $j$th points. These are also known as \emph{variogram} matrices. In the present work, we use the terms interchangeably, typically opting for ``variogram matrix" but also using ``Euclidean distance matrix" when we want to emphasize that the matrix comes from such a point configuration. 
For an equivalent characterization of Euclidean distance matrices, we introduce the cone of conditionally negative definite matrices.
First, a matrix is \emph{strictly conditionally negative definite} if it belongs to the set
\[
\mathcal{C}^d:=\{\Gamma\in\S^d_0: v^\top \Gamma v <0  \text{ for all } v \neq \zeros \text{ such that } v \perp \ones \}.\]
Matrices in the closure of this cone, $\bar{\mathcal{C}}^d$, which satisfy $v^\top \Gamma v \leq 0$ for all such $v$, are called \emph{conditionally negative definite} (CND). The Schoenberg criterion 
\citep{schoenberg1935, young1938discussion} states that a matrix is conditionally negative definite if and only if it is a Euclidean distance matrix of a point configuration.

Let $\theta (\Gamma)=\Theta$ denote the map which obtains the signed Laplacian matrix $\Theta$ from $\Gamma$. 
The map $\theta$ gives a bijection between the set of strictly conditionally negative definite matrices $\mathcal{C}^d$ and the set of rank $d-1$ positive semidefinite signed Laplacian matrices of connected graphs.
A CND variogram matrix $\Gamma$ can also be obtained directly from $\Sigma$, the Moore--Penrose pseudoinverse of $\Theta$, via the map 
\begin{equation}\label{eq:gammaofsigma}
    \gamma(\Sigma)=d_\Sigma \ones^\top + \ones d_\Sigma^\top -2 \Sigma.
\end{equation}
Its inverse map is 
\begin{equation}
\label{eq:sigmaofgamma}
    \sigma(\Gamma) = (I-\frac{1}{d}\ones\ones^\top)\left(-\frac{\Gamma}{2}\right)(I-\frac{1}{d}\ones\ones^\top),
\end{equation}
see also \citet[Appendix~A]{REZ2021}. We note that since the vector $\ones$ belongs to the kernel of the projection matrix $I - \frac{1}{d} \ones \ones^\top$, we have that $\ones \in \ker(\sigma(\Gamma))$ for every variogram matrix $\Gamma$.

It is also occasionally useful to consider the matrix $\Theta^{(k)}$, which is the matrix obtained from $\Theta$ by deleting the $k$th row and column. Since $\Theta$ has rank $d-1$, it is invertible and we define $\Sigma^{(k)} = \big(\Theta^{(k)})^{-1}$. We can also recover $\Sigma^{(k)}$ from $\Gamma$ via the \emph{covariance mapping}
\begin{equation}\label{eq:Sigma^kFromGamma}
\Sigma^{(k)}_{ij} = \frac{1}{2} (\Gamma_{ik} + \Gamma_{jk} - \Gamma_{ij}),
\end{equation}
see \citet[Equation~(9)]{REZ2021}.

\begin{ex}\label{ex:path}
Let
    \begin{align*}
					\Gamma = \begin{pmatrix}
						0&\Gamma_{12}&\Gamma_{13}\\
						\Gamma_{12}&0&\Gamma_{23}\\
						\Gamma_{13}&\Gamma_{23}&0\\
					\end{pmatrix}=\begin{pmatrix}
					0&9&25\\
					9&0&16\\
					25&16&0\\
				\end{pmatrix}
			\end{align*}
be the Euclidean distance matrix of the 2-simplex in Figure~\ref{fig:path}.
The Fiedler--Bapat identity in Equation \eqref{eq:FiedlerBapat} takes the form
\begin{align*}
\renewcommand{\arraystretch}{1.5}
\CM(\Gamma)^{-1} = \left(
\begin{array}{ccc|c}
	0 & -\frac{9}{2} & -\frac{25}{2} & 1 \\
	-\frac{9}{2} & 0 & -8 & 1 \\
	-\frac{25}{2} & -8 & 0 & 1 \\
	\hline
	1 & 1 & 1 & 0 \\
\end{array}
\right)^{-1}
&=
\renewcommand{\arraystretch}{1.5}
\left(
\begin{array}{ccc|c}
\frac{1}{9} & -\frac{1}{9} & 0 & \frac{1}{2} \\
-\frac{1}{9} & \frac{25}{144} & -\frac{1}{16} & 0 \\
0 & -\frac{1}{16} & \frac{1}{16} & \frac{1}{2} \\
\hline
\frac{1}{2} & 0 & \frac{1}{2} & \frac{25}{4} \\
\end{array}
\right).
\end{align*}
Note that the upper left $3\times 3$-block $\Theta$ is the signed Laplacian matrix for the path graph in Figure~\ref{fig:path} with $Q_{12}=\frac19$ and $Q_{23}=\frac{1}{16}$. The Moore--Penrose pseudoinverse of $\Theta$ is
\[
\Sigma = \Theta^+ =  \frac{1}{9}\begin{pmatrix}
    52 & -2 & -50 \\
    -2 & 25 & -23 \\
    -50 & -23 & 73
\end{pmatrix}.
\]
We see that applying the map $\gamma$ to $\Sigma$ indeed recovers $\Gamma = \gamma(\Sigma)$. 
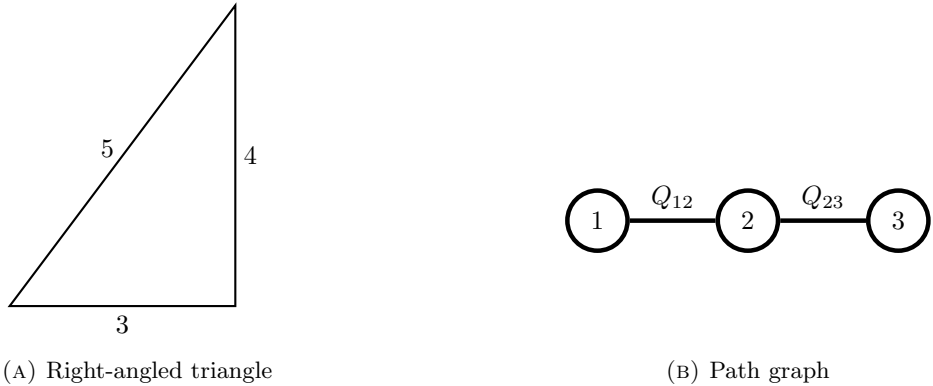
\begin{figure}[ht]
    \centering
    \begin{subfigure}[b]{0.5\textwidth}
        \centering
        \begin{tikzpicture}
            \coordinate (A) at (0,0);
            \coordinate (B) at (3,0);
            \coordinate (C) at (3,4);
            
            \draw[thick] (A) -- (B) -- (C) -- cycle;
            
            \node at (1.5, -0.25) {3}; 
            \node at (1.3, 2.1) {5};   
            \node at (3.2, 2) {4};     
        \end{tikzpicture}
        \caption{Right-angled triangle}
    \end{subfigure}
    \hfill
    \begin{subfigure}[b]{0.45\textwidth}
        \centering
        \begin{tikzpicture}[line width=.6mm]
            \node[minimum size=8mm,shape=circle,draw=black] (A) at (0, 1.5) {1};
            \node[minimum size=8mm,shape=circle,draw=black] (B) at (2, 1.5) {2};
            \node[minimum size=8mm,shape=circle,draw=black] (C) at (4, 1.5) {3};
            \node[] (D) at (0,0) {};
             \draw (A) -- (B) node[midway, above] {$Q_{12}$};
            \draw (B) -- (C) node[midway, above] {$Q_{23}$};
        \end{tikzpicture}
        \caption{Path graph}
    \end{subfigure}
    \caption{Right-angled triangle corresponding to $\Gamma$, and path graph corresponding to the adjacency matrix $Q$, see Example~\ref{ex:path}.}\label{fig:path}
\end{figure}
\end{ex}

Given a $d \times d$ Euclidean distance matrix $\Gamma$, suppose that $\sigma(\Gamma) = \Sigma$ has rank $m$. Then we can decompose $\Sigma = B^\top B$ where $B$ is an $m \times d$ matrix. 
As a consequence of Equation \eqref{eq:gammaofsigma}, the $d$ columns of $B$ yield a point configuration whose pairwise squared Euclidean distances are the entries of $\Gamma$; 
$\Sigma$ is called the \emph{Gram matrix} of this point configuration. 

\begin{defi}
    Let $\Gamma$ be a $d \times d$ Euclidean distance matrix. The \emph{dimensionality} of $\Gamma$ is the minimum $m$ such that $\Gamma$ is the Euclidean distance matrix of $d$ points in $m$ dimensions. In other words, it is the rank of $\sigma(\Gamma)$.
\end{defi}

Gower characterized the dimensionality of $\Gamma$ in terms of its rank and the geometry of the point configuration that realizes it \cite[Theorem~6]{gower1985properties}. In particular, if $\rank(\Gamma) = r$, then its dimensionality is either $r-2$ or $r-1$. In Proposition~\ref{prop:RankBoundOnGamma}, we show that when $r < d$, a generic $\Sigma$ of rank $r-2$ yields a variogram matrix $\gamma(\Sigma)$ of rank $r$.

\subsection{Extremal conditional independence}\label{sec:ExtremalCI}
In this section, we introduce multivariate Pareto distributions in the context of extreme value statistics. We summarize the statistical foundations for extremal conditional independence in this general setting before specializing to the H\"usler-Reiss setting in Section \ref{sec:HR}. 

Let $\X$ be a $d$-variate random vector, where each margin $X_i$ follows an absolutely continuous probability distribution with cumulative distribution function $ F_i$.
The bivariate extremal correlation coefficient
$\chi_{ij}:=\lim_{q\to 0}\PP(F_i(X_i)>1-q\mid F_j(X_j)>1-q),$
assuming that the limit exists, provides a characterization of extremal dependence.
When $\chi_{ij}>0$ for all pairs $i,j\in[d]$ we call $\X$ asymptotically dependent.

In extremal dependence modeling it is commonly assumed that $\X$ has standardized margins, i.e.~that all margins follow the same univariate distribution.
This simplifies dependence modeling, as the margins do not influence the underlying dependence structure.
Thus, in this paper we assume that all $X_i$ follow the standard exponential distribution.
A popular approach in multivariate extreme value theory is to consider values as extreme when they exceed some high threshold.
This can be formalized through a limiting distribution for threshold exceedances $\X-u\ones$, where $\ones^\top = (1,\ldots,1)$, under the condition that at least one component of $\X$ exceeds the threshold $u>0$.
Under the assumption of asymptotic dependence, the limit
\begin{align}
\PP(\Y\le \y):=\lim_{u\to \infty}\PP(\X-u\ones \le \y\mid \X\not\le u\ones )   \label{eq:multivariate_pareto}
\end{align}
of threshold exceedances gives rise to a probability distribution $\PP(\Y\le \y)$ where the support of $\Y$ is $\mathcal{L}:=\{\y\in\RR^d:\; \y\not\le \mathbf{0}\}$.
This distribution is typically called a (generalized) multivariate Pareto distribution \citep{roo2006}.
Any multivariate Pareto distribution is characterized by an exponent measure $\Lambda$  defined on $[-\infty,\infty)^d\setminus{-\boldsymbol{\infty}}$, such that
\[\PP(\Y\le \y)=\frac{\Lambda^c(\y\wedge\mathbf{0})-\Lambda^c(\y)}{\Lambda^c(\mathbf{0})},\]
where $\wedge$ denotes the componentwise minimum and $\Lambda^c(\y):=\Lambda([-\infty,\infty)^d\setminus [-\boldsymbol{\infty},\y])$.
If $\Lambda$ is assumed to be absolutely continuous, it allows a Lebesgue density $\lambda$, such that the density of $\Y$ is $f_Y(\y)=\lambda(\y)/\Lambda(\mathcal{L})$.
For any halfspace $\mathcal{L}^k:=\{\y\in\mathcal{L}: y_k\ge 0\}$ it is $\Lambda(\mathcal{L}^k)=1$, and $\Y^k:=\Y|\{ Y_k>0 \} $ has probability density $\lambda$ restricted to its support $\mathcal{L}^k$.

For some $C\subseteq[d]$ let $\lambda_C(\y_C):=\int_{\RR^{d-\#C}}\lambda(\y_{\setminus C})d\y_{\setminus C}. $ 
The marginal random vector $\Y^k_C$ has probability density
$\lambda_C(\y_C)$ on its support, for all $k\in C$.
Furthermore, $\lambda_C$ is the exponent measure density of the threshold exceedance limit of $\X_C$ in \eqref{eq:multivariate_pareto}. We denote by $\Y_{(C)}$ the resulting multivariate Pareto vector.
Usually, $\Y_{(C)}$ is not equal in distribution to the marginal $\Y_C$.

\citet{EH2020} introduce a notion of conditional independence for multivariate Pareto distributions.
Under the assumption that $\Lambda$ admits a density $\lambda$, or equivalently that $\X$ is asymptotically dependent, this is defined as follows:
\begin{defi}\label{def:extremalCI}
   Let $\Y$ be multivariate Pareto with exponent measure density $\lambda$ and $A,B,C$ disjoint subsets of $[d]$.
   We say that $\Y_A$ is \emph{extremal conditional independent} of $\Y_B$ given $\Y_C$ if 
\begin{align}
   \lambda_{ABC}(\y_{ABC})\lambda_C(\y_C)=\lambda_{AC}(\y_{AC})\lambda_{BC}(\y_{BC}) \label{eq:extremalCI} 
\end{align}   
for all $\y\in\mathcal{L}$. We abbreviate this statement as $\Y_A\perp_e \Y_B \mid \Y_C$.
\end{defi}
Since the restriction of $\lambda$ to a halfspace $\mathcal{L}^k$ is the probability density of $\Y^k$, this means that $\Y_A\perp_e \Y_B \mid \Y_C$ is equivalent to $$ \Y_A^k\indep \Y_B^k \mid \Y_C^k, \quad \forall k \in C.$$
Let $\lambda(\y_{A}|\y_C):=\frac{\lambda_{AC}(\y_{AC})}{\lambda_C(y_C)}$ be the conditional exponent measure density. Extremal conditional independence can also be rephrased as factorization of the conditional exponent measure density $\lambda(\y_{AB}|\y_C)$, namely Equation \eqref{eq:extremalCI} can be written as
\begin{equation}\label{eq:fact}  \lambda(\y_{AB}|\y_C)=\lambda(\y_{A}|\y_C)\lambda(\y_{B}|\y_C).
\end{equation}

Among the collection of all multivariate Pareto distributions, the parametric family of \HR distributions is particularly attractive for extremal dependence modeling, as it shares many properties of the multivariate Gaussian. In the next section, we describe how \HR models can be parameterized by a variogram or Laplacian matrix and outline an algebraic interpretation of pairwise conditional independence in this setting.

\subsection{H\"usler--Reiss distributions}\label{sec:HR}
H\"usler--Reiss distributions form a parametric subfamily of multivariate Pareto distributions.
In multivariate extremes they are considered as an analogue of the multivariate Gaussian distribution.
Under the assumption of asymptotic dependence, they are parameterized by conditionally negative definite variogram matrices $\Gamma \in \mathcal{C}^d  $ and have an exponent measure density
\[\lambda(\y)=c_1 \exp\left(-\frac{1}{2}\begin{pmatrix}
    \y^\top  & 1\\
\end{pmatrix} \CM(\Gamma)^{-1}\begin{pmatrix}
    \y\\
    1\\
\end{pmatrix} \right), \]
where $c_1=\frac{1}{\sqrt{-\det(\CM(2\pi\Gamma))}} = \left(\det\left(\begin{smallmatrix}
        -\pi \Gamma & \mathbf{1}\\
        -\mathbf{1}^\top  & 0\\
    \end{smallmatrix}\right)\right)^{-1/2}$ \citep{HES2022,EGR2025}.
A H\"usler--Reiss marginal $\Y_{(C)}$ is again H\"usler--Reiss with parameter $\Gamma_{C,C}$.
The conditional exponent measure densities are in fact Gaussian densities.
For any disjoint $A, C \subset V$ it is
    \begin{align*}
      \lambda(\y_A\mid \y_{C}) &=  \frac{1}{\sqrt{\det(2\pi\Sigma^*)}}\exp\left(-\frac{1}{2} (\y_A-\mu^*)^{\top}(\Sigma^*)^{-1}(\y_A-\mu^*) \right),
    \end{align*}
    where 
    \begin{align*}
      \Sigma^*&=-\frac{1}{2}\Gamma_{A,A} -\begin{pmatrix}
        -\frac{1}{2}\Gamma_{A,C}& \ones\\
      \end{pmatrix}
      \CM(\Gamma_{C,C})^{-1}
      \begin{pmatrix}
        -\frac{1}{2}\Gamma_{C,A}\\
        \ones^{\top}\\
      \end{pmatrix}
    \end{align*} 
    and
    \begin{align*}
      \mu^*=\begin{pmatrix}
        -\frac{1}{2}\Gamma_{A,C} & \ones\\    \end{pmatrix}\CM(\Gamma_{C,C})^{-1}\begin{pmatrix}
        \y_{C}\\
        1\\
      \end{pmatrix}.
    \end{align*}

The Gaussianity of $\lambda(\y_A\mid \y_{C})$ explains some of the appealing properties of the \HR{} distribution.
For example, it was shown in \citet{FJA2023} that
\[Y_i\perp_e Y_j\vert \Y_C \Longleftrightarrow \det \begin{pmatrix}
        -\frac{1}{2}\Gamma_{\{i\}\cup C,\{j\}\cup C} & \mathbf{1}\\
        \mathbf{1}^\top  & 0\\
    \end{pmatrix}=0, \]
i.e.~that extremal conditional independence between two singletons given a non-empty set is equivalent to a vanishing almost-principal minor of the Cayley--Menger matrix $\CM(\Gamma)$.
This in particular implies the equivalence
\[Y_i\perp_e Y_j\vert \Y_{V\setminus\{i,j\}} \Longleftrightarrow \Theta_{ij}=0,\]
which was first shown by \citet{HES2022}. This means that the pairwise Markov property can be characterized through vanishing entries in the \HR precision matrix $\Theta$, which has led to new parametric methodology for undirected extremal graphical models. We refer to the review article of \citet{EHLR2024} for further details.

We can therefore define any \HR graphical model as a geometric object as follows.
\begin{defi}
    Let $G=([d],E)$ be an undirected graph. The \emph{Hüsler--Reiss variogram model associated to $G$} is  
    \begin{equation}\label{eq:HRmodel}
        \mathcal{M}_G = \{ \Gamma\in\mathcal{C}^d: \theta(\Gamma)_{ij}=0  \text{ for all } (i,j) \notin E \} \subset \mathcal{C}^d,
    \end{equation}
    that is, the set of variogram matrices arising from the H\"usler--Reiss distributions Markov to $G$.
\end{defi}

In the context of regular conditional independence, random vectors with positive densities (such as the Gaussian), global and pairwise Markovianity are equivalent \citep[Theorem 13.1.4]{sullivant2018}. 
Because extremal conditional independence of a multivariate Pareto vector $\Y$ can be characterized via regular conditional independence of the random vectors $\Y^k$, $k\in [d]$, (extremal) pairwise and global Markovianity are also equivalent for a H\"usler-Reiss distribution with a positive exponent measure density. The extremal global Markov property states that if $A, B$ and $C$ are disjoint subsets of $[d]$ such that removing $C$ from $G$ separates the vertices in $A$ from the vertices in $B$, then the extremal conditional independence statement $\mathbf{Y}_A \perp_e \mathbf{Y}_B \mid \mathbf{Y}_C$ holds.

The variogram matrix of a \HR distribution can be obtained for any $k\in [d]$ from
\begin{equation}
    \Gamma_{ij}=\Var[Y^k_i-Y^k_j]. \label{eq:population_vario}
\end{equation}
This representation gives rise to an estimator for $\Gamma$.
For i.i.d.~observations $\y_1,\ldots,\y_n \in \mathcal{L}$, the empirical variogram \citep{EV2020} averages sample versions of \eqref{eq:population_vario} in all canonical halfspaces within the support of $\Y$.
For each $k\in[d]$, let $J_k:=\{\ell \in [n]:\y_\ell \in \mathcal{L}^k \}$.
The empirical variogram for a \HR distribution is given as
\begin{equation}
\overline{\Gamma}_{ij}=\frac{1}{d}\sum_{k=1}^d\widehat{\Var}\left(y_{\ell,i}-y_{\ell, j}: \ell \in J_k\right), \label{eq:emp_vario}
\end{equation}
where $\widehat{\Var}$ denotes the sample variance. 
This provides a consistent estimator of the variogram, and can be considered as a summary statistic in a \HR model.

In practice, we can not observe the limiting model $\Y$ directly, but only the underlying data-generating process $\X$.
Thus, to obtain an approximate sample for $\Y$, it is customary to threshold the data using a high quantile of the standardized margins of $\X$.
In this approach the approximate sample for $\Y$ is the collection of all $\x_i-u\ones$ where $\max \x_i >u$, where $u$ is some high quantile of the standard exponential distribution.
Here, the choice of the threshold can be seen as a hyperparameter which influences the tradeoff between sample size and accuracy of the approximate sample.

\section{The Extremal Conditional Independence Ideal}
\label{sec:CIideal}

The fact that $\Theta$ is a signed Laplacian matrix for $G$ enforces linear constraints on $\Theta$; in particular, $\Theta$ has a sparsity pattern corresponding to non-edges of $G$ and all of its row sums are $0$. Via the Fiedler-Bapat identity (Equation \eqref{eq:FiedlerBapat}), we see that this in turn enforces \emph{polynomial} constraints on the variogram matrix $\Gamma$. In this section, we consider these algebraic constraints in terms of graph separation.
Algebraically, one can associate a polynomial ideal in the variables $\gamma_{ij}$ to $\mathcal{M}_G$:
$$I_G := \mathcal{I}(\M_G) \subset \CC[\gamma_{ij} \mid 1 \leq i < j \leq n],$$
consisting of all polynomials that vanish on $\M_G$. We call this the \emph{extremal ideal} associated to $G$.

\begin{ex}
    Consider a trivariate H\"usler--Reiss model with variogram matrix $\Gamma$ that is Markov to the path graph from Figure~\ref{fig:path}(B).  
    From the Fiedler--Bapat identity (Equation \eqref{eq:FiedlerBapat}) we obtain 
    \[\theta(\Gamma)_{13}=\frac{1}{\det(\CM(\Gamma))}(\Gamma_{13}-\Gamma_{12} - \Gamma_{23} ).\]
    In fact, we compute that its extremal ideal is the principal linear ideal
    $$I_G = \langle \gamma_{12} + \gamma_{23} - \gamma_{13} \rangle \subset \CC[\gamma_{12},\gamma_{13},\gamma_{23}]. $$
    If we interpret the matrix $\Gamma$ as a Euclidean distance matrix, by the converse of the Pythagorean theorem, we conclude that the associated 2-simplex must be a right-angled triangle, compare Example~\ref{ex:path} and Figure~\ref{fig:path}. 
\end{ex}

By focusing on conditional independence, there is another natural ideal that we can associate to $G$.
A key fact in algebraic statistics is that for multivariate Gaussian distributions, conditional independence statements are equivalent to algebraic constraints on the covariance matrix. Namely, if $\X \sim \mathcal{N}(\mu,\Sigma)$, then 
\begin{align}
    \X_A \indep \X_B \, | \,\X_C \iff \mathrm{rank}(\Sigma_{A\cup C, B\cup C})=\#C. \label{eq:rankCI}
\end{align}

Previously, the only known parametric description of extremal conditional independence statements $\Y_A\perp_e \Y_B\mid \Y_C$ for H\"usler--Reiss distributions where $A,B$ are not singletons was via modularity of a multiinformation-inspired set function \citep{DER2026}.
We now show that there is an extremal analogue to \eqref{eq:rankCI} for Hüsler--Reiss distributions. 
\begin{thm}\label{thm:CMMinors}
    Let $\Y$ be a \HR random vector with variogram $\Gamma$ and let $A, B, C \subset [d]$ be disjoint. Then $\Y_A$ and $\Y_B$ are extremal conditionally independent given $\Y_C$ if and only if
    \[
    \mathrm{rank}(\CM(\Gamma_{A \cup C, B \cup C})) = \#C + 1.
    \]
\end{thm}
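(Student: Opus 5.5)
Since all conditional exponent measure densities are Gaussian, I will reduce the statement to the Gaussian rank criterion \eqref{eq:rankCI}, applied to the conditional covariance $\Sigma^*$ given in Section~\ref{sec:HR}, and then convert that rank condition into a rank condition on a Cayley--Menger matrix via Schur complements.

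\textbf{Step 1: reduce to a conditional covariance.} By \eqref{eq:fact}, $\Y_A\perp_e\Y_B\mid\Y_C$ holds exactly when $\lambda(\y_{AB}\mid\y_C)$ factorizes as $\lambda(\y_A\mid\y_C)\lambda(\y_B\mid\y_C)$. This conditional density is a Gaussian density in $\y_{AB}$, whose mean $\mu^*$ depends on $\y_C$ and whose covariance $\Sigma^*$ (computed with $A\cup B$ in place of $A$) does not. The $\y_{AB}$-marginals of this Gaussian are exactly $\lambda(\y_A\mid\y_C)$ and $\lambda(\y_B\mid\y_C)$, because $\lambda_{AC}$ is the marginal of $\lambda_{ABC}$. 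Hence the factorization holds for all $\y_C$ if and only if the off-diagonal block of the Gaussian covariance vanishes:
\[
\Sigma^*_{A,B} = -\tfrac12\Gamma_{A,B} - \begin{pmatrix}-\tfrac12\Gamma_{A,C} & \ones\end{pmatrix}\CM(\Gamma_{C,C})^{-1}\begin{pmatrix}-\tfrac12\Gamma_{C,B}\\ \ones^\top\end{pmatrix} = 0 .
\]
I will assume $C\neq\emptyset$. Then $\Gamma_{C,C}$ is strictly CND, since principal submatrices of a matrix in $\mathcal{C}^d$ stay in the cone, so $\CM(\Gamma_{C,C})$ is invertible by \eqref{eq:FiedlerBapat} applied to $\Gamma_{C,C}$.

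\textbf{Step 2: apply the Schur complement.} Reorder the rows and columns of $\CM(\Gamma_{A\cup C,B\cup C})$ into the block form
\[
M=\begin{pmatrix} \CM(\Gamma_{C,C}) & \begin{pmatrix}-\tfrac12\Gamma_{C,B}\\ \ones^\top\end{pmatrix}\\ \begin{pmatrix}-\tfrac12\Gamma_{A,C} & \ones\end{pmatrix} & -\tfrac12\Gamma_{A,B}\end{pmatrix}.
\]
Here the rows are indexed by $C$, the extra "$\ones$" row, and then $A$; the columns are indexed by $C$, the extra column, and then $B$. Since $\CM(\Gamma_{C,C})$ is invertible of size $\#C+1$, the Guttman rank additivity formula gives
\[
\rank M = \#C+1+\rank(\Sigma^*_{A,B}).
\]
Therefore $\rank M=\#C+1$ if and only if $\Sigma^*_{A,B}=0$, which by Step 1 is equivalent to extremal conditional independence. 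Permuting rows and columns does not change the rank, and the permuted matrix is exactly $\CM(\Gamma_{A\cup C,B\cup C})$, whose rows and columns both include the $\ones$ border.

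\textbf{Main obstacle.} I expect the main difficulty to be the rigor of Step 1, specifically checking that the formula for $\Sigma^*$ stated in Section~\ref{sec:HR} for a single set $A$ applies with $A\cup B$ in its place, and that its diagonal blocks give the separate conditionals. Both follow because the formula is uniform in the index set. One must also confirm that $\Sigma^*$ is positive definite, so that the Gaussian densities are nondegenerate and "independent" is equivalent to "zero cross-covariance."

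The case $C=\emptyset$ needs separate treatment if it is admitted. Extremal independence with empty $C$ is degenerate under asymptotic dependence: it never holds for a \HR vector with $\Gamma\in\mathcal{C}^d$. Correspondingly, $\CM(\Gamma_{A,B})$ has rank at least $2$, because its bordered row and column are nonzero, so the rank condition $\rank = 1$ also fails. I would check this small case by hand to confirm that both sides are false.
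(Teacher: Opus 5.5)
Your proposal is correct and follows the paper's proof essentially step for step. You reduce extremal conditional independence to the vanishing of the $(A,B)$ block of the Gaussian conditional covariance $\Sigma^*$, identify that block as the Schur complement of $\CM(\Gamma_{C,C})$ in $\CM(\Gamma_{A\cup C,B\cup C})$, and conclude by Guttman rank additivity. You also justify the invertibility of $\CM(\Gamma_{C,C})$ via Proposition~\ref{prop:CNDsubmatrix} and the Fiedler--Bapat identity, and you treat $C=\emptyset$ separately; the paper leaves both points implicit.
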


\begin{proof}
    By marginalizing, we may assume without loss of generality that $A\cup B \cup C =[d]$. Recall from Equation \eqref{eq:fact} that $\Y_A\perp_e \Y_B \mid \Y_C$ is equivalent to $\lambda(\y_{AB}|\y_C)=\lambda(\y_A|\y_C)\lambda(\y_{B}|\y_C).$
    Note also that $\Y^k_{D}\mid\{\Y^k_C=\y_C\} \sim \lambda(\y_{D}|\y_C)$ for any nonempty $D\subseteq [d]\setminus C$ and $k\in C$.
    Hence, these are Gaussian random vectors.
    Thus we are searching a factorization of a $\#(A\cup B)$-variate Gaussian random vector with covariance 
    \[-\frac{1}{2}\Gamma_{AB,AB}-\begin{pmatrix}
        -\frac{1}{2}\Gamma_{AB,C} & \bs 1\\
    \end{pmatrix}\CM(\Gamma_{C,C})^{-1}\begin{pmatrix}
        -\frac{1}{2}\Gamma_{C,AB} \\
        \bs 1^\top \\
    \end{pmatrix}.\]
    Therefore, $\Y_A\perp_e \Y_B \mid \Y_C$ if and only if
    \begin{align}
        -\frac{1}{2}\Gamma_{A,B}-\begin{pmatrix}
        -\frac{1}{2}\Gamma_{A,C} & \bs 1\\
    \end{pmatrix}\CM(\Gamma_{C,C})^{-1}\begin{pmatrix}
        -\frac{1}{2}\Gamma_{C,B} \\
        \bs 1^\top \\
    \end{pmatrix}=\mathbf{0}.\label{eq:AB-block}
    \end{align}
    Note that the left hand side of \eqref{eq:AB-block} is the Schur complement of the $-\frac{1}{2}\Gamma_{A,B}$ block in $\CM(\Gamma_{AC,BC})$. As $\CM(\Gamma_{C,C})$ is by assumption invertible, by the Guttman rank additivity formula we have that $$\operatorname{rank}(\CM(\Gamma_{AC,BC}))=\#C+1$$ if and only if \eqref{eq:AB-block} holds.    
\end{proof}
This characterization of extremal conditional independence allows us to define the \emph{extremal conditional independence ideal} of the model in terms of vanishing minors of $\CM(\Gamma)$.

\begin{defi}
    The \emph{extremal conditional independence ideal} of $G$ is the ideal
    \[
    \eci_G := \sum_{A \perp_e B \mid C} \langle (\#C + 2) \times (\#C + 2) \text{ minors of } \CM(\Gamma_{A \cup C, B \cup C}) \rangle\subset \CC[\gamma_{ij} \mid 1 \leq i < j \leq n].
    \]
    In words, this is the ideal of all minors of $\CM(\Gamma)$ that vanish according to separation statements in $G$, as guaranteed by Theorem \ref{thm:CMMinors} and the global Markov property. 
\end{defi}

In fact, not all of these minors are required to generate the extremal conditional independence ideal. In Theorem \ref{thm:CMGenerators}, we show that the determinants that are of Cayley-Menger type suffice as a generating set. We establish this using the following lemmas.

\begin{lemma}\label{lem:Dj}
    Let $M = (m_{ij})_{i,j=1}^n$ be an $n \times n$ matrix. For each $j$, let $D_j$ be the determinant of the matrix obtained from $M$ by replacing the $j$th column with the column of all ones. Then
    \[
    \det(M) = \sum_{j=1}^n m_{1j}D_j.
    \]
\end{lemma}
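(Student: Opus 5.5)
The plan is to prove the identity directly by cofactor expansion, with no genericity argument. Write $C_{ij}$ for the $(i,j)$ cofactor of $M$, that is, $(-1)^{i+j}$ times the minor of $M$ with row $i$ and column $j$ deleted.

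First, expand $D_j$ along its $j$th column. Replacing column $j$ of $M$ by $\ones$ does not change the cofactors attached to that column, since those cofactors only involve the other columns. The entries of the new column are all $1$, so
\[
D_j = \sum_{i=1}^n 1\cdot C_{ij} = \sum_{i=1}^n C_{ij}.
\]

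Next, substitute this into the right-hand side and swap the order of summation:
\[
\sum_{j=1}^n m_{1j} D_j = \sum_{i=1}^n \sum_{j=1}^n m_{1j} C_{ij}.
\]
The inner sum is an expansion along row $i$ using the entries of row $1$. This is the standard ``alien cofactor'' identity: it equals $\det(M)$ when $i=1$. When $i\neq 1$ it is the determinant of the matrix obtained from $M$ by replacing row $i$ with row $1$, which has two equal rows and so vanishes. Only the term $i=1$ survives, which gives $\det(M)$ and proves the lemma.

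Nothing here is a real obstacle; the only point that needs care is the claim that the column-$j$ cofactors of the modified matrix coincide with $C_{ij}$. As a cross-check, when $M$ is invertible Cramer's rule gives $D_j = \det(M)\,x_j$, where $Mx=\ones$. The right-hand side then equals $\det(M)\,(Mx)_1 = \det(M)$, consistent with the statement. Since both sides are polynomials in the entries of $M$, this alternative argument would also extend to all $M$ by density.
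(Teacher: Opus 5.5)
Your proof is correct, but it follows a different route from the paper. The paper works with the Leibniz expansion. It observes that the monomials of $m_{1j}D_j$ that also occur in $\det(M)$ are exactly those indexed by permutations $\tau$ with $\tau(j)=1$, so each monomial of $\det(M)$ arises for exactly one $j$. It then cancels all remaining monomials with the sign-reversing involution $\tau \mapsto (j\ k)\tau$, which pairs terms of $m_{1j}D_j$ with terms of $m_{1k}D_k$.

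You instead expand $D_j$ along the replaced column, which amounts to $D_j = (\operatorname{adj}(M)\ones)_j$. The lemma then collapses to $\sum_j m_{1j}D_j = (M\operatorname{adj}(M)\ones)_1 = \det(M)$. This is shorter and avoids all permutation bookkeeping. It also gives at once the version with row $1$ replaced by any row $k$.

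The paper's combinatorial argument has the merit of matching the proof of Lemma~\ref{lem:Dij}, which uses the same kind of involution. Your cofactor method handles that lemma too. Let $N$ be $M$ with column $j$ replaced by $\ones$. By linearity in row $i$, $D_{ij}$ equals the determinant of $N$ with row $i$ replaced by all ones, minus $C_{ij}$. The alien cofactor identity gives $\sum_i m_{ix}C_{ij}=0$ for $x\neq j$. This reduces that lemma to the transposed form of the present one, applied to $N$.
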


\begin{proof}
    First, recall that
    \[
    \det(M) = \sum_{\tau \in S_n} \sgn(\tau) \prod_{i=1}^n m_{\tau(i) i} \qquad \text{and} \qquad 
    D_j = \sum_{\tau \in S_n} \sgn(\tau) \prod_{\substack{i=1 \\ i \neq j}}^n m_{\tau(i) i}.
    \]
    The terms of $m_{1j} D_j$ that are also terms of $\det(M)$ correspond to the permutations $\tau \in S_n$ such that $\tau(j) = 1$. Hence every term of $\det(M)$ is a term in exactly one of the polynomials $m_{1j} D_j$. It remains to show that the other terms of these polynomials cancel with one another.

    Fix $j$ and a permutation $\tau \in S_n$ such that $\tau^{-1}(1) = k$ where $k \neq j$. The term of $m_{1j} D_j$ corresponding to $\tau$ is 
    \[
    \sgn(\tau) m_{1j} m_{1k} \prod_{\substack{i=1 \\ i \neq j,k}}^n m_{\tau(i) i}.
    \]
    Now consider the term of $m_{1k} D_k$ corresponding to the permutation $\tau' = (j \ k) \tau$.
    This permutation satisfies: $\sgn(\tau') = -\sgn(\tau)$, $\tau'(j) = 1$, $\tau'(k) = \tau(j)$ and $\tau'(i) = \tau(i)$ for all $i \neq j,k$. So the corresponding term of $m_{1k}D_k$ is
    \[
    -\sgn(\tau) m_{1j}m_{1k} \prod_{\substack{i =1 \\ i\neq j,k}}^n m_{\tau(i)i}.
    \]
    Finally, $(j \ k) \tau' = \tau$. Hence this establishes a sign-reversing involution between the terms of the polynomials $m_{1j} D_j$ that are not terms of $\det(M)$. So these terms cancel in the expression $\sum_{j=1}^n m_{1j} D_j$ and this sum is equal to $\det(M)$.  
\end{proof}

\begin{lemma}\label{lem:Dij}
    Let $M$ and $D_j$ be as in Lemma \ref{lem:Dj}. For each $i \in [n]$, let $D_{ij}$ be the determinant of the matrix whose $k\ell$ entry is
    \[
    \begin{cases}
        m_{k\ell} & \text{ if }  k \neq i \text{ and } \ell \neq j \\
        1 & \text{ if }  k = i \text{ and } \ell \neq j \\
        1 & \text{ if }  k \neq i \text{ and } \ell = j, \text{ and} \\
        0 & \text{ if }  k  = i \text{ and } \ell = j. \\
    \end{cases}
    \]
    Fix some $j \in [n]$ and some $x \in [n] \setminus \{j\}$.
    Then $D_j = \sum_{i=1}^n m_{ix} D_{ij}$.
\end{lemma}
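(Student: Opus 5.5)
The plan is to reduce this statement to Lemma \ref{lem:Dj}, applied to a transpose and expanded along row $x$ instead of row $1$. The only discrepancy is the $(i,j)$ entry, which we remove with a cofactor correction that sums to zero.

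\emph{Step 1 (expand $D_j$).} Let $N$ be the matrix obtained from $M$ by replacing its $j$th column with the all-ones column, so $D_j=\det(N)=\det(N^\top)$. Lemma \ref{lem:Dj} expands along row $1$, but the statement holds for any row $x$: conjugating by the transposition swapping $1$ and $x$ changes the sign of neither side. So we apply it to $N^\top$ along row $x$:
\[
D_j=\sum_{i=1}^n (N^\top)_{xi}\,E_i=\sum_{i=1}^n n_{ix}\,E_i ,
\]
where $E_i$ is the determinant of $N^\top$ with its $i$th column replaced by ones. Equivalently, $E_i$ is the determinant of $N$ with its $i$th row replaced by ones. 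Since $x\neq j$, column $x$ of $N$ agrees with column $x$ of $M$, so $n_{ix}=m_{ix}$.

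\emph{Step 2 (compare $E_i$ with $D_{ij}$).} The matrix defining $E_i$ agrees with the matrix defining $D_{ij}$ everywhere except at position $(i,j)$, where it has $1$ instead of $0$. The determinant is affine in a single entry, with slope equal to that entry's cofactor. Deleting row $i$ and column $j$ from either matrix leaves $M$ with row $i$ and column $j$ deleted. Hence
\[
E_i=D_{ij}+(-1)^{i+j}\det M_{\hat i,\hat j},
\]
where $M_{\hat i,\hat j}$ is $M$ with row $i$ and column $j$ deleted.

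\emph{Step 3 (the correction vanishes).} Substituting gives
\[
D_j=\sum_i m_{ix}D_{ij}+\sum_i m_{ix}(-1)^{i+j}\det M_{\hat i,\hat j}.
\]
The second sum is the Laplace expansion along column $j$ of the matrix obtained from $M$ by replacing column $j$ with column $x$. That matrix has two equal columns, since $x\neq j$, so the sum is $0$ and the claim follows.

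The main obstacle is the index bookkeeping in Step 1: checking that the transposed, row-$x$ version of Lemma \ref{lem:Dj} produces exactly the matrices with row $i$ and column $j$ equal to ones. After that, the cofactor cancellation in Step 3 is a standard alien-cofactor argument.
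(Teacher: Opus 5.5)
Your proof is correct, but it takes a different route from the paper.

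\textbf{The paper's route.} The paper proves this lemma from scratch, using the same permutation-expansion technique as in Lemma~\ref{lem:Dj}. It takes $x=1$ and matches the term of $D_j$ indexed by $\tau$ with the corresponding term of $m_{i1}D_{ij}$, where $i=\tau^{-1}(1)$. It then cancels all remaining terms of the polynomials $m_{i1}D_{ij}$ by a second sign-reversing involution, which exchanges the roles of $i$ and $s=\tau^{-1}(1)$.

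\textbf{Your route.} You reuse Lemma~\ref{lem:Dj} itself, moved to row $x$ and applied to $N^\top$. This produces the matrices with row $i$ and column $j$ equal to ones automatically. The only discrepancy, at entry $(i,j)$, costs one cofactor for each $i$. These corrections sum to an alien-cofactor expansion that vanishes.

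\textbf{Comparison.} Your argument is shorter and avoids a second round of permutation bookkeeping. It also shows exactly where $x\neq j$ is used: once to get $n_{ix}=m_{ix}$, and once for the repeated column in Step~3. The paper's argument is self-contained at the level of individual monomials and parallels the proof of Lemma~\ref{lem:Dj}.

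Lemma~\ref{lem:Dj} is itself an alien-cofactor identity. Writing $C_{kj}$ for the cofactors of $M$, we have $D_j=\sum_k C_{kj}$, and hence $\sum_j m_{xj}D_j=\sum_k\sum_j m_{xj}C_{kj}=\det M$. So your approach could be phrased entirely in terms of cofactor expansions, with no reference to permutations at all.
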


\begin{proof}
    Without loss of generality, suppose $j > 1$ and let $x = 1$.
    We have that
    \[
    D_j = \sum_{\tau \in S_n} \sgn(\tau) \prod_{\substack{k=1 \\ k \neq \tau^{-1}(j)}}^n m_{k\tau(k)} \qquad \text{and} \qquad D_{ij} = \sum_{\substack{\tau \in S_n \\ \tau^{-1}(j) \neq i}} \sgn(\tau) \prod_{\substack{k=1 \\ k \neq i, \tau^{-1}(j)}}^n m_{k\tau(k)}. \]
    Let $\tau \in S_n$ and let $i = \tau^{-1}(1)$. Since $j \neq 1$, we have that $i \neq \tau^{-1}(j)$; so $\tau$ has a corresponding term in both $D_j$ and $m_{i1}D_{ij}$. This term is
    \[
    \sgn(\tau) m_{i1} \prod_{\substack{k=1 \\ k \neq i, \tau^{-1}(j)}}^n m_{k\tau(k)}.
    \]
    Note that this is not a term of any other $m_{s1}D_{sj}$ for $s \neq i$ since $m_{s1}$ does not divide it. So it remains to show that all other terms of the polynomials $m_{i1}D_{ij}$ cancel with each other.

    Fix some $i \in [n]$ and let $\tau$ be a permutation such that $i \neq \tau^{-1}(1)$. Suppose instead that $\tau^{-1}(1) = s$ for some $s \neq i$. Assume further that $\tau^{-1}(j) \neq i$ so that $\tau$ corresponds to a term of $D_{ij}$. Note that since $\tau^{-1}(1) = s$ and $j \neq 1$, we also have that $\tau^{-1}(j) \neq s$. So the corresponding term of $m_{i1}D_{ij}$ is
    \[
    \sgn(\tau) m_{i1}m_{s1} \prod_{\substack{k=1 \\ k \neq i,s,\tau^{-1}(j)}}^n m_{k\tau(k)}.
    \]

    Now let $\tau' = (i \ s) \tau$. Then $\tau'$ satisfies: $\tau'(i) = 1$, $\tau'(s) = \tau(i) \neq j$ and $\tau'(k) = k$ for all other $k \neq i,s$. Since $\tau'(s) \neq j$, $\tau'$ has a corresponding term in $m_{s1}D_{sj}$, which is
    \[
    \sgn(\tau') m_{s1} m_{i1} \prod_{\substack{k =1 \\ k \neq i, s, (\tau')^{-1}(j)}}^n m_{k\tau(k)}.
    \]
    Since $\tau^{-1}(j) = t$ for some $t \neq i,s$, by the above observations we have that $(\tau')^{-1}(j) = t$ as well. We also have $\sgn(\tau') = -\sgn(\tau)$. So this term is equal to
    \[
    -\sgn(\tau) m_{i1}m_{s1} \prod_{\substack{k=1 \\ k \neq i,s,\tau^{-1}(j)}}^n 
    m_{k\tau(k)}.
    \]

    Finally, $(s \ i) \tau' = \tau$. Hence this establishes a sign-reversing involution between the terms of the polynomials $m_{i1} D_{ij}$ that are not terms of $D_j$. So these terms cancel in the expression $\sum_{i=1}^n m_{i1} D_{ij}$ and this sum is equal to $D_j$.  
\end{proof}

We can now prove our main theorem regarding the generators of the extremal conditional independence ideal.

\begin{thm}\label{thm:CMGenerators}
    Let $A, B, C \subset [d]$ be disjoint. Then the ideal generated by all $(\#C + 2) \times (\#C +2)$ minors of $\CM(\Gamma_{A \cup C, B \cup C})$ is equal to
    \begin{equation}\label{eqn:CMGenerators}
    \langle \det \CM(\Gamma_{A', B'}) \mid A' \subset A \cup C, \ B' \subset B \cup C, \ \#A' = \#B' = \#C + 1 \rangle.
    \end{equation}
    Thus the extremal conditional independence ideal equals
    \[
    \eci_G = \sum_{ A \perp_e B \mid C} \langle \det \CM(\Gamma_{A', B'}) \mid A' \subset A \cup C, \ B' \subset B \cup C, \ \#A' = \#B' = \#C + 1 \rangle.
    \]
\end{thm}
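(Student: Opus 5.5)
The plan is to prove the two inclusions between the ideal of all $(\#C+2)\times(\#C+2)$ minors of $\CM(\Gamma_{A\cup C, B\cup C})$ and the ideal in \eqref{eqn:CMGenerators}. Write $n=\#C+2$.

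The inclusion "$\supseteq$" is immediate. For $A'\subset A\cup C$ and $B'\subset B\cup C$ with $\#A'=\#B'=\#C+1$, the matrix $\CM(\Gamma_{A',B'})$ is the $n\times n$ submatrix of $\CM(\Gamma_{A\cup C,B\cup C})$ on rows $A'$ together with the border row of ones, and on columns $B'$ together with the border column of ones. So its determinant is one of the minors.

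For "$\subseteq$" I will sort an arbitrary $n\times n$ minor by whether it uses the border row and the border column.

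\emph{Both used.} The minor is exactly some $\det\CM(\Gamma_{A',B'})$, so it is already a generator.

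\emph{Neither used.} The minor is $\det M$ for an $n\times n$ submatrix $M$ of $-\frac12\Gamma_{A\cup C,B\cup C}$.
\begin{enumerate}
\item By Lemma~\ref{lem:Dj}, $\det M=\sum_j m_{1j}D_j$.
\item Each $D_j$ is, up to sign, the minor of $\CM(\Gamma_{A\cup C,B\cup C})$ that uses the border column but not the border row. So it suffices to handle the next case.
\end{enumerate}

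\emph{Only the border column used.} Up to reordering columns, the minor is some $D_j$.
\begin{enumerate}
\item Since $n\ge 2$, I can choose a column index $x\neq j$. Lemma~\ref{lem:Dij} then gives $D_j=\sum_i m_{ix}D_{ij}$.
\item Each $D_{ij}$ is the determinant of the submatrix in which row $i$ is replaced by ones and column $j$ by ones, with a $0$ at their intersection.
\item Moving row $i$ and column $j$ to the last positions changes the determinant only by a sign. The result is exactly $\det\CM(\Gamma_{A',B'})$, where $A'$ is the original row set minus row $i$ and $B'$ is the original column set minus column $j$.
\item These sets satisfy $A'\subset A\cup C$, $B'\subset B\cup C$ and $\#A'=\#B'=\#C+1$, so $D_{ij}$ is a generator up to sign.
\end{enumerate}

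\emph{Only the border row used.} Apply the previous case to the transpose. Transposition preserves the structure because $\Gamma$ is symmetric and $\CM(\Gamma_{A',B'})^\top=\CM(\Gamma_{B',A'})$.

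In every case the minor is a polynomial combination of the generators, with coefficients given by entries $-\frac12\gamma_{k\ell}$. This proves "$\subseteq$" and hence the first equality.

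The statement about $\eci_G$ then follows by summing this equality over all separation statements $A\perp_e B\mid C$ in the definition of $\eci_G$.

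The only delicate point is the sign and position bookkeeping when identifying $D_{ij}$ with a Cayley--Menger determinant. The signs are irrelevant for ideal membership, so I expect no real obstacle. The substantive combinatorics is already contained in Lemmas~\ref{lem:Dj} and~\ref{lem:Dij}.
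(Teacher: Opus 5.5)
Your proposal is correct and follows the paper's proof: Lemma~\ref{lem:Dj} reduces a minor inside the $\Gamma$-block to the $D_j$, Lemma~\ref{lem:Dij} reduces each $D_j$ to the $D_{ij}$, and these are, up to sign, the Cayley--Menger determinants $\det\CM(\Gamma_{A',B'})$. The differences are cosmetic. The paper first passes to $\widetilde{\CM}(\Gamma)=\CM(-2\Gamma)$ so it can work with the entries of $\Gamma$ directly, whereas you apply the lemmas to the $-\frac12\Gamma$ entries; and your explicit case split by border row/column (including the transpose case) spells out minors that the paper's proof handles only implicitly.
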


\begin{proof}
To simplify the following proof, we introduce a variant of the Cayley-Menger matrix,
\begin{align}
    \widetilde{\CM}(\Gamma) := \CM(-2\Gamma) = \begin{pmatrix}
    \Gamma & \ones \\
    \ones^\top & 0
\end{pmatrix}.\label{eq:tildeCM}
\end{align}
Note that by scaling each row of $\CM(\Gamma)$ by $-2$ and the final column by $-1/2$, we have that
\[
\det(\widetilde{\CM}(\Gamma))  = (-2)^{d-1} \det(\CM(\Gamma)).
\]
Thus the ideal of all $(\#C+2) \times (\#C+2)$ minors of $\CM(\Gamma_{A\cup C, B \cup C})$ is equal to that of $\widetilde{\CM}(\Gamma_{A\cup C, B \cup C})$ and the ideal in Equation \eqref{eqn:CMGenerators} is the same if each $\CM(\Gamma_{A',B'})$ is replaced with $\widetilde{\CM}(\Gamma_{A',B'})$.

Each of the generators in Equation \eqref{eqn:CMGenerators} is a scalar multiple of a $(\#C + 2) \times (\#C +2)$ minor of $\CM(\Gamma_{A \cup C, B \cup C})$. So it suffices to show that all other such minors can be written as an ideal combination of the polynomials in Equation \eqref{eqn:CMGenerators}.

    First consider a matrix of variables $M = (m_{ij})_{i,j=1}^n$ and let $M_{\backslash i,\backslash j}$ denote $M$ with its $i$th row and $j$th column removed. Let $D_j$ and $D_{ij}$ be as in Lemmas \ref{lem:Dj} and \ref{lem:Dij}. Note that by moving the $i$th row to become the last row and the $j$th column to become the last column, we have that 
    \[D_{ij} = (-1)^{n-i}(-1)^{n-j} \det(\widetilde{\CM}(M_{\backslash i, \backslash j})) = (-1)^{i+j}\det(\widetilde{\CM}(M_{\backslash i, \backslash j})).\]
    Combining this fact with Lemmas \ref{lem:Dj} and \ref{lem:Dij}, we see that
    \begin{equation}\label{eq:CMIdealContainment}
    \det(M) \in \langle D_j \mid j \in [n] \rangle \subset \langle D_{ij} \mid i,j \in [n] \rangle = \langle \det(\widetilde{\CM}(M_{\backslash i, \backslash j})) \mid i,j \in [n] \rangle.
    \end{equation}
    
    Now we specialize to the case where $M$ is a $(\#C + 2) \times (\#C +2)$ minor of $\widetilde{\CM}(\Gamma_{A \cup C, B \cup C})$.
    Let $J$ denote the ideal in Equation \eqref{eqn:CMGenerators}. Then in particular, $\det(\widetilde{\CM}(M_{\backslash i, \backslash j})) \in J$ for all $i,j$. This fact and Equation \eqref{eq:CMIdealContainment} together imply that $\det(M) \in J$, as needed.

    The second statement follows immediately from the definition of the extremal conditional independence ideal.
\end{proof}

The minors corresponding to saturated statements $i \perp_e j \mid [d] \setminus \{i,j\}$ for each nonedge $(i,j)$ are all included in $\eci_G$. Therefore if $\Gamma$ is a matrix in the intersection of the variety $\mathcal{V}(\eci_G)$ with the strictly conditionally negative definite cone, $\mathcal{C}^d$, then $\CM(\Gamma)^{-1}$ has $ij$ entry equal to $0$ if $(i,j)$ is a nonedge of $G$. So $\Gamma$ is a variogram matrix in the model specified by $G$. In other words, the set of variogram matrices in this model is equal to $\mathcal{V}(\eci_G)$ intersected with $\mathcal{C}^d$.

However, the same may not be true on the level of vanishing ideals since $\eci_G$ is not necessarily prime. It may have associated primes whose varieties do not intersect the open cone $\mathcal{C}^d$ and thus are not statistically relevant. 
We now describe how to obtain the vanishing ideal of the model from $\eci_G$.

\begin{prop}
    Let $G$ be a connected graph. Then
    \[
    I_G = \sqrt{\eci_G :(\det\CM(\Gamma))^\infty}.
    \]
\end{prop}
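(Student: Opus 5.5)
The plan is to prove that $\mathcal{V}(\eci_G : (\det\CM(\Gamma))^\infty)$ coincides with the Zariski closure of $\M_G$ as an irreducible variety. The Nullstellensatz then gives $I_G = \mathcal{I}(\overline{\M_G}) = \sqrt{\eci_G :(\det\CM(\Gamma))^\infty}$. Geometrically, the saturation $\eci_G :(\det\CM(\Gamma))^\infty$ cuts out the Zariski closure of $\mathcal{V}(\eci_G)\setminus \mathcal{V}(\det\CM(\Gamma))$. So it suffices to show
\[
\overline{\mathcal{V}(\eci_G)\setminus\mathcal{V}(\det\CM(\Gamma))} = \overline{\M_G},
\]
with closures taken in the Zariski topology over $\CC$.

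\textbf{Inclusion ``$\supseteq$''.} Every $\Gamma\in\M_G$ lies in $\mathcal{C}^d$, so $\CM(\Gamma)$ is invertible by the Fiedler--Bapat identity \eqref{eq:FiedlerBapat}. By the global Markov property and Theorem~\ref{thm:CMMinors}, all generators of $\eci_G$ vanish at $\Gamma$. Hence $\M_G\subseteq \mathcal{V}(\eci_G)\setminus\mathcal{V}(\det\CM)$.

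\textbf{Inclusion ``$\subseteq$''.} Let $U=\mathcal{V}(\eci_G)\setminus\mathcal{V}(\det\CM)$. On $U$ the inverse $\CM(\Gamma)^{-1}$ is defined, and its upper-left block $\Theta=\theta(\Gamma)$ is a rational function of $\Gamma$. For each nonedge $(i,j)$, the saturated statement $i\perp_e j\mid[d]\setminus\{i,j\}$ holds by separation. Its generator is the almost-principal minor $\det\CM(\Gamma_{[d]\setminus j,[d]\setminus i})$, which is $\pm$ the cofactor giving $\Theta_{ij}\det\CM(\Gamma)$. So $\Theta_{ij}=0$ on $U$. Conversely, consider the linear space $L_G$ of symmetric matrices $\Theta$ with $\Theta\ones=\zeros$ and $\Theta_{ij}=0$ for nonedges. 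Together with the extra row/column $(\p,R^2)$, the inversion map $\Theta\mapsto\Gamma$ is a birational map from an open dense subset of this linear space onto $U$. Its inverse on $U$ is $\Gamma\mapsto\CM(\Gamma)^{-1}$, whose image has exactly the Laplacian block structure.

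The point is that every block matrix $\left(\begin{smallmatrix}\Theta&\p\\ \p^\top&r\end{smallmatrix}\right)$ with $\Theta\in L_G$ whose inverse has bottom-right entry $0$, last column $\ones$ and zero diagonal is in the image. These conditions cut out an irreducible variety $W$: it is the graph of rational functions $\p=\p(\Theta)$, $r=r(\Theta)$ over an open subset of $L_G$. This holds because $\p$ and $R^2$ are determined by $\Theta$ through the formulas $\p=\frac12\Theta d_\Sigma+\frac1d\ones$ and $R^2=\frac14 d_\Sigma^\top\Theta d_\Sigma+\frac1d\ones^\top d_\Sigma$, with $\Sigma=\Theta^+$ rational on the rank-$(d-1)$ locus. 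Hence $U$ is the image of an irreducible set, and is irreducible.

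\textbf{Main obstacle: density of $\M_G$ in $U$.} Since $U$ is irreducible of dimension $\dim L_G=\#E$, it remains to see that $\M_G$ is Zariski dense in it. $\M_G$ is the image of the nonempty Euclidean-open subset of $L_G$ consisting of PSD rank-$(d-1)$ Laplacians; these exist for connected $G$, for example positive edge weights give a connected Laplacian. A nonempty Euclidean-open subset of the real points of the irreducible linear space $L_G$ is Zariski dense in $L_G\otimes\CC$. Pushing forward under the dominant rational map gives $\overline{\M_G}=\overline U$.

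The delicate point is justifying that every point of $U$ really has the form above, i.e.\ that $\CM(\Gamma)^{-1}$ has top-left block with $\Theta\ones=0$ and $\Theta\in L_G$. The first condition I would check directly from $\CM(\Gamma)\CM(\Gamma)^{-1}=I$, reading off the last row $(\ones^\top,0)$. Connectedness of $G$ is used only to guarantee nonemptiness of the real positive-definite locus, and hence density.
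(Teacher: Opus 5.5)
Your proposal is correct and follows essentially the paper's argument: the nonedge almost-principal minors in $\eci_G$ force $\theta(\Gamma)_{ij}=0$ off $\mathcal{V}(\det\CM(\Gamma))$, and the saturation/closure theorem plus the Nullstellensatz then finish. You skip the irreducible decomposition, and your rational parametrization by $L_G$, together with Zariski density of the real PSD locus, justifies the step the paper only asserts (that such complex points lie in $\overline{\M_G}$). Two small fixes: over $\CC$ use the rational inverse $(\Theta+\frac{1}{d}\ones\ones^\top)^{-1}-\frac{1}{d}\ones\ones^\top$ rather than $\Theta^+$, and note that for $\overline{U}\subseteq\overline{\M_G}$ you only need every point of $U$ to come from some rank-$(d-1)$ matrix $\Theta\in L_G$, not that the map is onto $U$.
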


\begin{proof}
    Consider an irreducible decomposition of the variety $\mathcal{V}(\eci_G) = W_1 \cup \dots \cup W_m$. Since $I_G$ is prime and $\eci_G \subset I_G$, we have that $V(I_G)$ is contained in one of these irreducible components. Since $V(I_G)$ is the Zariski closure of $\M_G$, denoted $\overline{\M_G}$, we may assume without loss of generality that $\overline{\M_G} \subset W_1$. 
    
    First we claim that for all $l >1$, $W_l \subset \mathcal{V}(\det\CM(\Gamma)).$ Indeed, let $X \in \mathcal{V}(\eci_G)$ be a matrix such that $\CM(X)$ is invertible. Then consider $\CM(X)^{-1}$. Note that if $(i,j)$ is not an edge in $G$, then the extremal conditional independence statement $\{i\} \perp_e \{j\} \mid [d]\setminus \{i,j\}$ holds in the model. So the minor of $\CM(X)$ obtained by deleting the $i$ row and $j$ column belongs to $\eci_G$. Hence this minor of $X$ vanishes and the $ij$ entry of $\CM(X)^{-1}$ is $0$. So $X$ arises as the variogram matrix corresponding to a weighted signed Laplacian of $G$ and therefore belongs to $\overline{\M_G}$. So $X$ is in $W_1$ and all other irreducible components of $\mathcal{V}(\eci_G)$ are contained in the vanishing locus of $\det(\CM(\Gamma))$. 
    
    This argument also shows that $W_1 = \overline{\M_G}$. Indeed, it shows that every variogram in $W_1$ is obtained from a weighted Laplacian of $G$. Hence the variety of the saturation $\eci_G:(\det\CM(\Gamma))^\infty$ is equal to $W_1$ by \cite[Chapter~4, Section~4, Theorem~10(iii)]{CLO}. By Hilbert's Nullstellensatz, we conclude that
    \[
    \mathcal{I}(\M_G) = I_G = \sqrt{\eci_G:(\det\CM(\Gamma))^\infty}.\qedhere
    \]
\end{proof}

\begin{ex}
It is not easy to directly find an example where $I_G \neq \eci_G$ since equality is satisfied for the graphs where standard Gr\"obner basis methods terminate quickly. However, we can adapt an example from Gaussian graphical models to obtain an extremal one where this is the case. Consider the directed graph with two parent nodes $\{6,7\}$ pointing to all nodes $\{1,2,3,4,5\}$. The CI statements coming from d-separation are the same as those of its \emph{moralization}, namely the undirected graph corresponding to a complete bipartite graph with an extra edge between the parents $6$ and $7$: $H:= K_{5,2} \cup \{(6,7)\}$. For this model, the \emph{pentad}, $f_\text{pentad}$, which can be found in \cite[Example~14.2.6]{sullivant2018}, belongs to the vanishing ideal of the undirected Gaussian graphical model given by $H$ but it is not in the Gaussian conditional independence ideal $\mathrm{CI}_H$.

We now consider $G:=H * \{8\}$ to be the suspension graph where the node $8$ is connected to every other node. Consider a weighted Laplacian $\Theta$ in the \HR graphical model specified by $G$. Deleting the $8$th row and column yields a $7 \times 7$ concentration matrix in the Gaussian graphical model specified by $H$ since each diagonal entry is independent of all other entries. Thus $\Sigma^{(8)}$ is a covariance matrix in this Gaussian model and the pentad, $f_\text{pentad}$, vanishes on $\Sigma^{(8)}$. Indeed, the ideal of the \HR model $I_G$ is equal to the vanishing ideal of the Gaussian model specified by $H$ transformed according to the covariance mapping in Equation \eqref{eq:Sigma^kFromGamma}.   If we consider the transformed pentad, 
\begin{align*}
  \tilde{f} =   &-(-g_{23} + g_{28} + g_{38})  (-g_{14} + g_{18} + g_{48})  (-g_{34} + g_{38} + g_{48})  (-g_{15} + g_{18} + g_{58})  (-g_{25} + g_{28} + g_{58}) \\
&+ (-g_{13} + g_{18} + g_{38})  (-g_{24} + g_{28} + g_{48})  (-g_{34} + g_{38} + g_{48})  (-g_{15} + g_{18} + g_{58})  (-g_{25} + g_{28} + g_{58}) \\
&+ (-g_{23} + g_{28} + g_{38})  (-g_{14} + g_{18} + g_{48})  (-g_{24} + g_{28} + g_{48})  (-g_{15} + g_{18} + g_{58})  (-g_{35} + g_{38} + g_{58}) \\
&- (-g_{12} + g_{18} + g_{28})  (-g_{24} + g_{28} + g_{48})  (-g_{34} + g_{38} + g_{48})  (-g_{15} + g_{18} + g_{58})  (-g_{35} + g_{38} + g_{58}) \\
&- (-g_{13} + g_{18} + g_{38})  (-g_{14} + g_{18} + g_{48})  (-g_{24} + g_{28} + g_{48})  (-g_{25} + g_{28} + g_{58})  (-g_{35} + g_{38} + g_{58}) \\
&+ (-g_{12} + g_{18} + g_{28})  (-g_{14} + g_{18} + g_{48})  (-g_{34} + g_{38} + g_{48})  (-g_{25} + g_{28} + g_{58})  (-g_{35} + g_{38} + g_{58})\\
&- (-g_{13} + g_{18} + g_{38})  (-g_{23} + g_{28} + g_{38})  (-g_{24} + g_{28} + g_{48})  (-g_{15} + g_{18} + g_{58})  (-g_{45} + g_{48} + g_{58})\\
&+ (-g_{12} + g_{18} + g_{28})  (-g_{23} + g_{28} + g_{38})  (-g_{34} + g_{38} + g_{48})  (-g_{15} + g_{18} + g_{58})  (-g_{45} + g_{48} + g_{58})\\
&+ (-g_{13} + g_{18} + g_{38})  (-g_{23} + g_{28} + g_{38})  (-g_{14} + g_{18} + g_{48})  (-g_{25} + g_{28} + g_{58})  (-g_{45} + g_{48} + g_{58})\\
&- (-g_{12} + g_{18} + g_{28})  (-g_{13} + g_{18} + g_{38})  (-g_{34} + g_{38} + g_{48})  (-g_{25} + g_{28} + g_{58})  (-g_{45} + g_{48} + g_{58})\\
&- (-g_{12} + g_{18} + g_{28})  (-g_{23} + g_{28} + g_{38})  (-g_{14} + g_{18} + g_{48})  (-g_{35} + g_{38} + g_{58})  (-g_{45} + g_{48} + g_{58})\\
&+ (-g_{12} + g_{18} + g_{28})  (-g_{13} + g_{18} + g_{38})  (-g_{24} + g_{28} + g_{48})  (-g_{35} + g_{38} + g_{58})  (-g_{45} + g_{48} + g_{58})
\end{align*}
we see that $\tilde{f} \in I_G \setminus \eci_G$. Indeed, the vertices $6$ and $7$ do not appear in any subscript of a variable in $\tilde{f}$. But these vertices are in every separation statement in $G$, so every term of every polynomial in $\eci_G$ must have a variable with $6$ or $7$ as an index. So $\tilde{f}$ does not belong to $\eci_G$.
\end{ex}

\section{Matrix Completion and the Extremal Maximum Likelihood Degree}\label{sec:MLdegree}

Let $G=(V,E)$ be an undirected graph. We call a symmetric matrix $\mathring{\Gamma}$ a \emph{partially-specified conditionally negative definite variogram} with respect to $G$, if its diagonal is zero, and all principal submatrices corresponding to cliques of $G$ are conditionally negative definite. We call the matrix partially-specified because in the following discussion, we only consider the entries of $\mathring{\Gamma}$ corresponding to entries of $G$ and we ``forget'' those corresponding to non-edges. The ``forgotten" entries are represented by a placeholder ``?". For $\mathring{\RR}=\RR\cup\{?\}$, we then write $\mathring{\Gamma}\in \mathring{\RR}^{\#V\times \#V}$.
Given a partially-specified CND variogram, \citet[Definition 4.1]{HES2022}\label{def:MCP} define the following matrix completion problem.
\begin{defi}\label{def:matrixcompl}
Let $G=(V,E)$ be an undirected graph and let $\mathring{\Gamma}$ be a partially-specified conditionally negative definite matrix with respect to $G$. 
We call a conditionally negative matrix $\Gamma$, with corresponding signed Laplacian $\Theta$, that satisfies
\begin{align*}
    \Gamma_{ij}&=\mathring{\Gamma}_{ij}, \qquad\forall (i,j)\in E,\\
    \Theta_{ij}&=0, \;\;\; \qquad\forall (i,j)\not\in E,
\end{align*}
a solution to the matrix completion given by $G$ and $\mathring{\Gamma}$.
\end{defi}
\citet{HES2022} show that the matrix completion problem in Definition~\ref{def:matrixcompl} links to maximum likelihood estimation for degenerate Gaussian distributions where the covariance is constrained to be a signed Laplacian matrix.
In fact, for a connected undirected graph $G=(V,E)$ and some sample variogram matrix $\overline{\Gamma}$ that gives rise to a partially-specified CND variogram, the solution to the matrix completion problem is the variogram corresponding to the unique solution of
\begin{align}
    \max_{\Theta \in \mathbb{U}^d}\left\{ \log\Det(\Theta) +\frac{1}{2}\tr\left(\Theta \overline{\Gamma}\right)\right\},\qquad \text{s.t. } \Theta_{ij}=0\;\; \forall (i,j)\not\in E, \label{eq:LCGGM-MLE}
\end{align}
where $\Det$ denotes the pseudo-determinant.

The similarity of the degenerate Gaussian log-likelihood with the \HR exponent measure density $\lambda$ provides an attractive option for parameter estimation, as the H\"usler--Reiss log-likelihood is not easily tractable due to its normalization constant  \citep{HES2022,REZ2021}.
Here, the sample variogram is chosen as the empirical variogram (see \eqref{eq:emp_vario}), which is a consistent estimator of the variogram parameter of the \HR distribution.
A key observation is that $\frac{\partial}{\partial \Theta_{ij}}\left\{ \log\Det(\Theta) +\frac{1}{2}\tr\left(\Theta \overline{\Gamma}\right)\right\}=-\Gamma_{ij}$ for all $i\neq j$ \citep[Proposition~A.5]{REZ2021}. Therefore, the critical points of \eqref{eq:LCGGM-MLE} are given by the matrix completion problem with partially-specified conditionally negative definite matrix $\overline{\Gamma}$.
This means that the solution of the matrix completion problem
in Definition \ref{def:matrixcompl},
where now $\overline{\Gamma}$ is the empirical variogram of \citet{EV2020}, is a surrogate maximum likelihood estimator for the \HR graphical model with respect to $G$.

The matrix completion problem in Definition~\ref{def:matrixcompl} is a semialgebraic problem, since we are searching for solutions of a polynomial equation system under the constraint that the solution is conditionally negative definite.
Without the definiteness constraints, this is a purely algebraic problem, such that techniques from commutative algebra can help analyze the solution space of the problem.
Thus it is natural to make the following definition. 
\begin{defi}
  Let $\mathring{\Gamma}$ be a \emph{generic} partially-specified conditionally negative definite matrix with respect to a graph $G=(V,E)$. We call the number of complex solutions of the matrix completion problem in Definition~\ref{def:matrixcompl}, or equivalently the number of complex critical points of the optimization problem~\eqref{eq:LCGGM-MLE}, without the restriction of conditional negative definiteness, the \emph{extremal (surrogate) maximum likelihood degree}. We denote this as $\emld(G).$
\end{defi}

\begin{ex}\label{ex:complete}
    Let $G=K_d$ be the complete graph on $d$ vertices. Then the solution $\Gamma$ to the matrix completion problem in Definition \ref{def:matrixcompl} is completely specified by the conditionally negative definite variogram $\overline{\Gamma}$, as there are no constraints on the corresponding $\Theta$. Thus, $\emld(K_d)=1$ for any $d$.
\end{ex}

We find a multiplicative formula for the extremal maximum likelihood degrees of graphs that can be decomposed into two components. 
\begin{thm}\label{thm:multiplicative}
Let $G=(V,E)$ be a connected graph that is decomposed by sets $A,B$ and a separating clique $C$, such that $A\cup B\cup C=V$. Then 
\begin{equation}
    \emld(G) = \emld (G[A\cup C]) \cdot \emld(G[B\cup C]).
\end{equation}
\end{thm}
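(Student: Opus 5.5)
The plan is to exhibit a bijection between complex solutions of the completion problem for $G$ and pairs of complex solutions for $G_A:=G[A\cup C]$ and $G_B:=G[B\cup C]$, mirroring the Gaussian argument. Given generic partial data $\mathring\Gamma$ on $G$, its restrictions $\mathring\Gamma_{AC}$ and $\mathring\Gamma_{BC}$ are generic partial data for $G_A$ and $G_B$. Because $C$ is a clique, all entries of $\Gamma_{C,C}$ are specified and agree in both subproblems. Genericity lets us assume that $\CM(\Gamma_{C,C})$ is invertible and that all relevant Schur complements are nonzero.

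\emph{Restriction map.} Let $\Gamma$ be a complex solution for $G$, so $\theta(\Gamma)_{ij}=0$ for all nonedges. Since $C$ separates $A$ from $B$, all entries $\Theta_{ab}$ with $a\in A$, $b\in B$ vanish. By the algebraic argument of Theorem~\ref{thm:CMMinors}, which only uses Schur complements and Guttman rank additivity and so extends to complex matrices, this is equivalent to the block identity \eqref{eq:AB-block}. In particular $\Gamma_{A,B}$ is a rational function of $\Gamma_{AC,AC}$ and $\Gamma_{BC,BC}$. The key claim is that $\theta(\Gamma_{AC,AC})$ equals the Schur complement of $\CM(\Gamma)^{-1}$ obtained by marginalizing out $B$, i.e.\ inverting $\CM(\Gamma_{AC,AC})$ is the same as taking the Schur complement of the $B$-block in $\CM(\Gamma)^{-1}$. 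Because $\Theta_{A,B}=0$, this Schur complement changes only the entries indexed by $C$ and by the extra row $\mathbf{p},R^2$, and leaves the entries $\Theta_{ij}$ with $i\in A$ or $j\in A$ unchanged. Hence every nonedge of $G_A$ that involves $A$ still has zero entry. For nonedges inside $C$ there is nothing to check, since $C$ is a clique. Thus $\Gamma_{AC,AC}$ solves the $G_A$ problem, and symmetrically $\Gamma_{BC,BC}$ solves the $G_B$ problem.

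\emph{Gluing map.} Conversely, take solutions $\Gamma^A$ and $\Gamma^B$ that agree on $C\times C$. Define $\Gamma$ by keeping both blocks and setting $\Gamma_{A,B}$ by \eqref{eq:AB-block}. Then $\Gamma$ matches $\mathring\Gamma$ on all edges, since no edge joins $A$ to $B$. Moreover $\CM(\Gamma_{AC,BC})$ has rank $\#C+1$, so $\Theta_{A,B}=0$. To obtain the remaining zeros I would use the block inverse formula for the glued matrix, which is the standard decomposable construction: $\CM(\Gamma)^{-1}$ equals the zero-padded $\CM(\Gamma^A)^{-1}$ plus the zero-padded $\CM(\Gamma^B)^{-1}$ minus the zero-padded $\CM(\Gamma_{C,C})^{-1}$. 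This identity is the Cayley--Menger analogue of the Gaussian formula $K=[K_{AC}]^0+[K_{BC}]^0-[K_C]^0$. It shows that every nonedge inside $A\cup C$ or inside $B\cup C$ has a zero entry in the glued inverse. The two constructions are mutually inverse, which yields the bijection and hence the product formula.

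\emph{Main obstacle.} The hardest step is verifying this padded-inverse identity and the marginalization claim for the bordered matrix $\CM$, where the extra row and column $(\mathbf{p},R^2)$ sit alongside $C$. I would first prove it directly: treat the index set $C\cup\{\text{border}\}$ as the separator, and apply the Gaussian decomposable-inverse lemma to the invertible symmetric matrix $\CM(\Gamma)$. That lemma is purely linear-algebraic over $\CC$ and needs only that $\CM(\Gamma)_{A,B}$ is given by the Schur-complement formula \eqref{eq:AB-block}, which is exactly the hypothesis here. Genericity is then needed to ensure that no spurious solutions make $\CM(\Gamma_{C,C})$ or $\CM(\Gamma)$ singular. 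This holds because all solutions are counted for data in a Zariski-open set.
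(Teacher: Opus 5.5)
Your proposal is correct and follows essentially the paper's route. In both, solutions for $G$ restrict to solutions of the two marginal problems because the marginal precision matrix is a Schur complement that, since $\Theta_{A,B}=\mathbf{0}$, leaves the $A$-rows (resp.\ $B$-rows) of $\Theta$ unchanged; the paper quotes this from \citet[Theorem~2]{ET2023}, while you derive it by block inversion of $\CM(\Gamma)$. Conversely, in both, pairs of marginal solutions glue back to a unique solution for $G$. Your padded-inverse identity, with the border index absorbed into the separator as in Lemma~\ref{lem:MLEformula}, makes this gluing explicit. In particular it pins down the $\Theta_{C,C}$ block, which \eqref{eq:Theta_sub_separated} leaves as $*$, so your converse step is more complete than the paper's.
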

\begin{proof}
From \citet[Theorem~2]{ET2023} it follows that the \HR precision matrix of the marginals $AC$ and $BC$ are
\begin{align*}
    \theta(\Gamma_{A\cup C,A\cup C})&=\Theta_{AC,AC}-\Theta_{AC,B}\Theta_{B,B}^{-1}\Theta_{B,AC}, \\
    \theta(\Gamma_{B\cup C,B\cup C})&=\Theta_{BC,BC}-\Theta_{BC,A}\Theta_{A,A}^{-1}\Theta_{A,BC}.
\end{align*}
As $C$ is a separating set, we observe that $\Theta_{A,B}=\boldsymbol{0}$, and we can conclude that 
\begin{align}
    \theta(\Gamma_{A\cup C,A\cup C})=\begin{pmatrix}
        \Theta_{A,A}&\Theta_{A,C} \\
        \Theta_{C,A}& *\\
    \end{pmatrix}, \qquad \theta(\Gamma_{B\cup C,B\cup C})=\begin{pmatrix}
        *&\Theta_{C,B} \\
        \Theta_{B,C}& \Theta_{B,B}\\    \end{pmatrix}.\label{eq:Theta_sub_separated}
\end{align}

Assume a generic summary statistic $\mathring{\Gamma}$.
The equations for the completion problem of $G$ are
$\Gamma_{ij}=\mathring{\Gamma}_{ij}$ for all $(i,j)\in E$ and $\Theta_{ij}=0$ for all $(i,j)\not\in E$.
From the characterizations~\eqref{eq:Theta_sub_separated} we find that the completion problem of the marginal with variogram $\Gamma_{A\cup C,A\cup C}$ is thus
$\Gamma_{ij}=\mathring{\Gamma}_{ij}$ for all $(i,j)\in E[A\cup C]$ and $\Theta_{ij}=0$ for all $(i,j)\not\in E[A\cup C]$.
Let now $\widehat\Gamma$ denote the solution of the matrix completion problem with respect to $G$ for some generic summary statistic $\mathring{\Gamma}$.
Then $\widehat{\Gamma}_{A\cup C,A\cup C}$ satisfies the marginal completion problem with respect to $G[A\cup C]$, and $\widehat{\Gamma}_{B\cup C,B\cup C}$ satisfies the marginal completion problem with respect to $G[B\cup C]$. Clearly, the marginal completion solutions agree on the block $\Gamma_{C,C}$, as they both equal $\mathring{\Gamma}_{C,C}$ there.
Thus, given solutions for both marginal problems, we can obtain a solution for the original completion problem by solving the problem
\[\Gamma=\begin{pmatrix}
    \widehat{\Gamma}_{A,A}&\widehat{\Gamma}_{A,C}&?\\
    \widehat{\Gamma}_{C,A}&\widehat{\Gamma}_{C,C}&\widehat{\Gamma}_{C,B}\\
    ?&\widehat{\Gamma}_{B,C}&\widehat{\Gamma}_{B,B}\\
\end{pmatrix}\qquad \Theta=\begin{pmatrix}
    *&*&\bs 0\\
    *&*&*\\
    \bs 0 &*&*\\
\end{pmatrix}.\]
This means that every pair of solutions to the marginal completion problems gives rise to a solution of the original completion problem, in the sense that $\Theta$ is completely specified by the marginal solutions in \eqref{eq:Theta_sub_separated}. As the function $\Gamma=\gamma(\Theta^+)$ is a bijection, the corresponding variogram $\Gamma$ is also totally specified. Hence, the total number of solutions can be expressed as a product.
\end{proof}

As a corollary of Theorem~\ref{thm:multiplicative}, we conclude that chordal graphs have extremal ML degree one.
\begin{cor}\label{cor:one}
    Let $G$ be a chordal graph. Then $\emld(G)=1$.
\end{cor}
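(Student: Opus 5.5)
We argue by induction on the number of vertices of $G$, using the standard structure theory of chordal graphs together with Theorem~\ref{thm:multiplicative} and Example~\ref{ex:complete}.

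\emph{Reduction to connected graphs.} Theorem~\ref{thm:multiplicative} and the completion problem in Definition~\ref{def:matrixcompl} are stated for connected graphs, so we first restrict to connected chordal $G$. If one wants to treat disconnected chordal graphs as well, the empty clique $C=\emptyset$ separates components. In that case the same block argument as in the proof of Theorem~\ref{thm:multiplicative} would apply, but this needs a separate check, since $\Theta$ then no longer has rank $d-1$ on a connected graph. We note this caveat and otherwise assume $G$ is connected, as the model $\mathcal{M}_G\subset\mathcal{C}^d$ implicitly requires.

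\emph{Base case.} If $G$ is complete, then $\emld(G)=1$ by Example~\ref{ex:complete}. This covers in particular every graph on at most two vertices.

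\emph{Inductive step.} Suppose $G$ is connected, chordal and not complete. By Dirac's characterization of chordal graphs, every minimal vertex separator of $G$ is a clique. Since $G$ is not complete, it has two nonadjacent vertices $u,v$, and so there is a minimal $u$--$v$ separator $C$, which is a clique.
\begin{enumerate}[label=(\roman*)]
\item Let $A$ be the vertex set of the component of $G\setminus C$ containing $u$, and let $B=V\setminus(A\cup C)$, which contains $v$. Then $A$ and $B$ are nonempty, $C$ separates $A$ from $B$, and $A\cup B\cup C=V$. This is exactly the decomposition required in Theorem~\ref{thm:multiplicative}.
\item The induced subgraphs $G[A\cup C]$ and $G[B\cup C]$ are chordal, because induced subgraphs of chordal graphs are chordal.
\item Both induced subgraphs are connected. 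Minimality of $C$ means every vertex of $C$ has a neighbour in the component $A$, and likewise a neighbour in the component of $G\setminus C$ containing $v$. Any other component of $G\setminus C$ lying in $B$ attaches to $C$ because $G$ is connected, and $C$ is itself a connected clique.
\item Both induced subgraphs have strictly fewer vertices than $G$, since $B$ and $A$ are nonempty.
\end{enumerate}
By the induction hypothesis both factors have extremal ML degree one. Theorem~\ref{thm:multiplicative} then gives $\emld(G)=1\cdot 1=1$.

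\emph{Main obstacle.} The only nontrivial ingredient is the existence of a clique separator yielding a proper decomposition with connected pieces. We take this from the classical theory of chordal graphs, via Dirac's theorem or, equivalently, perfect elimination orderings. Using a simplicial vertex $s$ would also work: take $C=N(s)$, $A=\{s\}$, and $B$ the rest, provided $B\neq\emptyset$, which holds when $G$ is not complete with a suitable choice of $s$. The rest follows directly from the multiplicativity theorem.
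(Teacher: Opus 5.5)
Your proof is correct and follows the same route as the paper: split $G$ along a separating clique, apply Theorem~\ref{thm:multiplicative}, and induct with complete graphs (Example~\ref{ex:complete}) as the base case. The only differences are minor: the paper inducts on the number of cliques in a gluing sequence rather than on the number of vertices via Dirac's separator theorem, and it leaves implicit the connectivity of the two pieces (and of $G$ itself), which you check explicitly.
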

\begin{proof}
    Every chordal graph can be obtained successively by gluing cliques. Each clique is a complete graph and has extremal ML degree one by Example \ref{ex:complete}. By applying Theorem~\ref{thm:multiplicative} and induction on the number of cliques, the result follows. 
\end{proof}

In the following, we will derive a closed-form solution for the conditionally negative definite matrix completion problem for chordal graphs.
For some $d\times d$-matrix $M$ and some index set $A\subset V$ with $\#V=d$, let $\big[M_{A,A}\big]^d$ be the $d\times d$-matrix that equals $M_{i,j}$ for $i,j\in A$, and zero elsewhere.
We remind the reader that for some CND variogram matrix $\Gamma$, the function $\theta(\Gamma)$ returns the corresponding signed Laplacian matrix. Furthermore, note that $\theta([\Gamma_{ii}])=[0]$ for all $i\in V$.
First, we study the setting where a connected graph is composed of only two cliques.
\begin{lemma}\label{lem:MLEformula}
    Let $G=(V,E)$ be a connected graph with $\#V=d$ that is composed of two cliques $A,B$ with $A\cup B=V$.
    Let $\mathring{\Gamma}$ be a partially-specified conditionally negative definite variogram with respect to $G$.
    Then, the solution to the matrix completion problem is given by
    \[\Theta=\big[\theta(\mathring{\Gamma}_{A,A})\big]^d+\big[\theta(\mathring{\Gamma}_{B,B})\big]^d-\big[\theta(\mathring{\Gamma}_{A\cap B, A\cap B})\big]^d\]
\end{lemma}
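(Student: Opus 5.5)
Set $C:=A\cap B$, $A_0:=A\setminus C$, $B_0:=B\setminus C$, and let $\Theta^\star$ denote the right-hand side of the claimed formula. The strategy is to show that $\Theta^\star$ is a valid signed Laplacian with the sparsity pattern of $G$. We then check that its variogram $\gamma((\Theta^\star)^+)$ agrees with $\mathring{\Gamma}$ on the blocks $A\times A$ and $B\times B$. Since $\emld(G)=1$ by Corollary~\ref{cor:one}, and the solution is unique (the strictly concave problem \eqref{eq:LCGGM-MLE}), this identifies $\Theta^\star$ as the solution. The sparsity and Laplacian structure are immediate. Each summand $[\theta(\mathring{\Gamma}_{S,S})]^d$ has zero row sums, so $\Theta^\star\ones=\zeros$. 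No summand touches an entry in $A_0\times B_0$, so $\Theta^\star_{ij}=0$ for all non-edges. If $C=\emptyset$, the graph is disconnected, which is excluded; if $\#C=1$, the last term vanishes by the remark $\theta([\Gamma_{ii}])=[0]$.

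For the completion property, I would build the candidate variogram directly and avoid computing pseudoinverses. Let $\widehat\Gamma$ be the unique completion of $\mathring{\Gamma}$ whose Gaussian-like structure is the ``conditional independence given $C$'' gluing. Concretely, realize $\mathring{\Gamma}_{A,A}$ and $\mathring{\Gamma}_{B,B}$ as point configurations sharing the simplex of $C$. Place them so that the components orthogonal to the affine hull of $C$ are mutually orthogonal: choose $k\in C$ and use the covariance mapping \eqref{eq:Sigma^kFromGamma} to pass to $\Sigma^{(k)}$, which is a genuine positive definite covariance. In the covariance picture, define $\Sigma^{(k)}_{A_0,B_0}:=\Sigma^{(k)}_{A_0,C'}(\Sigma^{(k)}_{C',C'})^{-1}\Sigma^{(k)}_{C',B_0}$ with $C'=C\setminus\{k\}$. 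This is the standard Gaussian chordal completion, and it gives a positive definite $\Sigma^{(k)}$ whose inverse $\Theta^{(k)}$ has a zero $A_0\times B_0$ block. The classical Gaussian formula (Lauritzen) then gives
\[
\Theta^{(k)}=\big[(\Sigma^{(k)}_{A',A'})^{-1}\big]+\big[(\Sigma^{(k)}_{B',B'})^{-1}\big]-\big[(\Sigma^{(k)}_{C',C'})^{-1}\big],
\]
with $A'=A\setminus\{k\}$ and $B'=B\setminus\{k\}$. Returning via $\Theta^{(k)}$ being $\Theta$ with row and column $k$ removed, the resulting $\Theta$ is a CND-compatible Laplacian of $G$. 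Its variogram extends $\mathring{\Gamma}$, because the covariance mapping is entrywise and bijective.

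It remains to match this $\Theta$ to $\Theta^\star$. Apply the same dictionary to each marginal: for $S\in\{A,B,C\}$ containing $k$, the matrix $\theta(\mathring{\Gamma}_{S,S})$ with row/column $k$ deleted equals $(\Sigma^{(k)}_{S\setminus k,S\setminus k})^{-1}$. This holds because the covariance mapping for the marginal $\Gamma_{S,S}$ is just the restriction of that for $\Gamma$. Hence $\Theta$ and $\Theta^\star$ agree after deleting row and column $k$. Both have zero row sums, so the deleted row and column are determined, and $\Theta=\Theta^\star$.

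The main obstacle I expect is the bookkeeping of the $k$-th row and column. The paper's construction $\Theta^{(k)}=(\Sigma^{(k)})^{-1}$ must be compatible with restricting to marginals for $S\ni k$, and $C$ must be nonempty so that a common $k$ exists (guaranteed by connectivity). A second point to check is that the positive definiteness of $\Sigma^{(k)}_{S,S}$ for $S=A,B$ follows from $\mathring{\Gamma}$ being CND on cliques. The glued $\Sigma^{(k)}$ being positive definite is the standard Schur-complement argument. Strict definiteness is needed to invert, so I would assume genericity (strict CND on the cliques), as implicitly in the statement.
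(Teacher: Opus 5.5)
Your proposal is correct and follows essentially the same route as the paper. You fix $k\in A\cap B$, pass to $\Sigma^{(k)}$ via the covariance mapping, and apply Lauritzen's decomposable formula. You then identify $(\Sigma^{(k)}_{S\setminus\{k\},S\setminus\{k\}})^{-1}$ with $\theta(\mathring{\Gamma}_{S,S})$ with row and column $k$ deleted, and recover the $k$th row and column from the zero row sums. The differences are cosmetic: you build the Gaussian completion explicitly where the paper cites Lauritzen and \citet[Lemma~4.3]{HES2022}, and your appeal to $\emld(G)=1$ is superfluous, since uniqueness already follows from strict concavity.
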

\begin{proof}
Let $k\in A\cap B$, and let $G_k=(V\setminus\{k\},E_k)$ be the subgraph of $G$ where the node $k$ was removed. Define the (bijective) map $\varphi_k(\Gamma)=\frac{1}{2}(\Gamma_{ik}+\Gamma_{jk}-\Gamma_{ij})_{i,j\in V\setminus\{k\}}$, and let $S^{(k)}=\varphi_k(\mathring{\Gamma})$.
The positive definite matrix completion problem
\begin{align*}
    \Sigma^{(k)}_{ij}&=S^{(k)}_{ij}, \qquad\forall (i,j)\in E_k,\\
    \Theta^{(k)}_{ij}&=0, \;\;\;\qquad\forall (i,j)\not\in E_k,
\end{align*}
has a unique solution $\Sigma^{(k)}$ if $S^{(k)}_{A,A}=\varphi_k(\mathring{\Gamma}_{A,A})$ and $S^{(k)}_{B,B}=\varphi_k(\mathring{\Gamma}_{B,B})$ are positive definite.
This condition is satisfied because $\varphi_k$ maps strictly CND matrices to positive definite matrices \citep{HES2022}.
Let $K_{[D]}:=(S^{(k)}_{D,D})^{-1}$ for any $D\subset V$ except for $D=\{k\}$, where we set $K_{[k]}=0$.
By \citet[Proposition~5.6]{Lauritzen96}, the solution satisfies
\[\Theta^{(k)}=(\Sigma^{(k)})^{-1}=\big[K_{[A]}\big]^{d-1}+\big[K_{[B]}\big]^{d-1}-\big[K_{[A\cap B]}\big]^{d-1}.\]
By \citet[Lemma 4.3]{HES2022}, $\Gamma = \varphi_k^{-1}(\Sigma^{(k)})$ is the solution to the conditionally negative matrix completion problem.
Furthermore, observe that for $k\in D\subseteq V$, it is $\theta^{(k)}(\mathring{\Gamma}_{D,D})=K_{[D]}$. 
Thus,
\[\Theta^{(k)}=\big[\theta^{(k)}(\mathring{\Gamma}_{A,A})\big]^{d-1}+\big[\theta^{(k)}(\mathring{\Gamma}_{B,B})\big]^{d-1}-\big[\theta^{(k)}(\mathring{\Gamma}_{A\cap B,A\cap B})\big]^{d-1},\]
Now, as 
\[\Theta_{ij}=\begin{cases}
    \Theta^{(k)}_{ij}, &i\neq k ,j\neq k,\\
    -\sum_{u\neq k}\Theta^{(k)}_{iu}, & i\neq k, j=k,\\
    \sum_{u\neq k,v\neq k}\Theta^{(k)}_{uv}, & i= k, j=k,\\
\end{cases}\]
is a linear map, the result follows.
\end{proof}

This lemma gives rise to the following theorem.
\begin{thm}\label{thm:chordalmld1}
Let $G=(V,E)$ be a chordal connected graph with $\#V=d$,  set of maximal cliques $\mathcal{C}$ and set of separating cliques $\mathcal{S}$. Let $\mathring{\Gamma}$ be a partially-specified conditionally negative definite variogram with respect to $G$.
Then, the corresponding matrix completion problem is solved by
\[\Theta = \sum_{C\in \mathcal{C}}\big[\theta(\mathring{\Gamma}_{C,C})\big]^d-\sum_{S\in \mathcal{S}}\nu(S)\big[\theta(\mathring{\Gamma}_{S,S})\big]^d, \]
where $\nu(S)$ is the number of times the separating clique $S$ appears in a perfect sequence.
\end{thm}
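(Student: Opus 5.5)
We argue by induction on the number of maximal cliques of $G$, using a perfect sequence $(C_1,\dots,C_m)$ of the maximal cliques, with separators $S_j = C_j \cap (C_1\cup\dots\cup C_{j-1})$ for $j\ge 2$. Each $S_j$ is a clique and separates $C_j\setminus S_j$ from $H_{j-1}:=C_1\cup\dots\cup C_{j-1}$ in the induced subgraph $G[H_j]$. Since $G$ is connected, each $S_j$ is nonempty. The multiset $\{S_2,\dots,S_m\}$ contains each separating clique $S\in\mathcal{S}$ exactly $\nu(S)$ times, so the claimed formula reads
\[
\Theta=\sum_{j=1}^m \big[\theta(\mathring{\Gamma}_{C_j,C_j})\big]^d-\sum_{j=2}^m\big[\theta(\mathring{\Gamma}_{S_j,S_j})\big]^d .
\]
The base case $m=1$ is Example~\ref{ex:complete}: $G$ is complete and $\Theta=\theta(\mathring{\Gamma})$. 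For $m=2$ the claim is exactly Lemma~\ref{lem:MLEformula}.

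For the inductive step, let $H=H_{m-1}$, $C=C_m$, $S=S_m$. Then $G[H]$ is chordal and connected with perfect sequence $C_1,\dots,C_{m-1}$. By induction its completion $\widehat\Gamma_H$ has Laplacian given by the formula, restricted to $H$. I would reproduce the proof of Lemma~\ref{lem:MLEformula} with the first clique $A$ replaced by $H$. Fix $k\in S$ and pass to $S^{(k)}=\varphi_k(\cdot)$ on $V\setminus\{k\}$. The Gaussian completion problem for the decomposable graph $G_k$, decomposed by the clique $S\setminus\{k\}$, satisfies the standard decomposition identity for decomposable graphs (Lauritzen, Propositions~5.6 and 5.9):
\[
\Theta^{(k)}=\big[(\widehat\Sigma^{(k)}_{H\setminus k})^{-1}\big]^{d-1}+\big[K_{[C]}\big]^{d-1}-\big[K_{[S]}\big]^{d-1},
\]
where $\widehat\Sigma^{(k)}_{H\setminus k}$ is the Gaussian completion on $G_k[H\setminus k]$. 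By \citet[Lemma~4.3]{HES2022}, this completion corresponds under $\varphi_k$ to $\widehat\Gamma_H$, so its inverse is $\theta^{(k)}(\widehat\Gamma_H)$.

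The main obstacle is the choice of $k$. The vertex $k$ lies in $S_m$ but need not lie in the earlier separators, so we cannot simply apply the inductive formula at level $k$ for all terms. I would avoid this by working only at the full Laplacian level. Translate the identity above to $\Theta$ via the linear row-sum extension map at the end of the proof of Lemma~\ref{lem:MLEformula}. Every term there is indexed by a set containing $k$ (namely $H$, $C$, $S$), so each lifts to $[\theta(\cdot)]^d$. This yields
\[
\Theta=\big[\theta(\widehat\Gamma_H)\big]^d+\big[\theta(\mathring{\Gamma}_{C,C})\big]^d-\big[\theta(\mathring{\Gamma}_{S,S})\big]^d .
\]
The fact that $\widehat\Gamma_H$ is itself a completed variogram, rather than partially specified, causes no problem: Lemma~\ref{lem:MLEformula}'s argument only uses the marginals on the two pieces and the clique intersection, and $\widehat\Gamma_{S,S}=\mathring{\Gamma}_{S,S}$ because $S$ is a clique contained in $H$. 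Also, $\theta$ of a marginal is not a submatrix of $\Theta$ in general, but here it is exactly the needed term, by \eqref{eq:Theta_sub_separated}.

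Finally, substitute the induction hypothesis, written as a $|H|\times|H|$ matrix. Embedding $[\cdot]^{|H|}$ into $[\cdot]^d$ commutes with the padding by zeros, so the result is the displayed sum over $j\le m$. Regrouping the separator terms by multiplicity gives the $\nu(S)$ coefficients. One should also remark that the result is independent of the chosen perfect sequence, because the completion is unique by Corollary~\ref{cor:one}.
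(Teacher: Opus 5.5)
Your skeleton is the paper's: peel off the last clique of a perfect sequence, recurse on $H=C_1\cup\dots\cup C_{m-1}$, and lift from $\Theta^{(k)}$ to $\Theta$ by the row-sum map. However, the key step does not go through as written, and the obstacle you flag (that $k$ need not lie in earlier separators) is not the one that breaks the argument.

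In Lemma~\ref{lem:MLEformula} the passage through $\varphi_k$ is legitimate only because $A$ and $B$ are cliques covering $V$. Hence $k\in A\cap B$ is adjacent to \emph{every} other vertex. Then all $\mathring\Gamma_{ik}$ are specified, and $S^{(k)}=\varphi_k(\mathring\Gamma)$ is known on all of $E_k$, including the diagonal $S^{(k)}_{ii}=\mathring\Gamma_{ik}$. The variogram problem on $G$ then becomes the Gaussian problem on $G\setminus k$. This is the setting in which the paper invokes \citet[Lemma~4.3]{HES2022}, and the same mechanism as in the proposition on suspension graphs.

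Once $A$ is replaced by the non-clique $H$, a vertex $k\in S_m$ is in general not adjacent to all of $H$. Take the path with edges $12,23,34$. The last step has $H=\{1,2,3\}$, $C=\{3,4\}$, $S=\{3\}$, forcing $k=3$. Then $S^{(3)}_{11}=\mathring\Gamma_{13}$ and $S^{(3)}_{12}$ involve the unspecified entry $\mathring\Gamma_{13}$, so the Gaussian completion on $G_k[H\setminus k]$ has no data. Even with data it would be the wrong problem. $G_3[\{1,2\}]$ is complete and imposes no zeros, whereas the variogram completion on $G[H]$ is governed by $\Theta_{13}=0$. In $\Theta^{(3)}$ that is the row-sum condition $\Theta^{(3)}_{11}+\Theta^{(3)}_{12}=0$, not a zero entry.

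Consequences:
\begin{itemize}
\item The claimed correspondence with $\widehat\Gamma_H$ fails.
\item Your displayed identity for $\Theta^{(k)}$ is not derived.
\item Nothing in the argument shows that a strictly CND completion on $G$ exists, which the statement asserts. Corollary~\ref{cor:one} is a generic count of complex solutions, not an existence result.
\end{itemize}

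The missing idea, which is exactly the paper's device, is to complete $H$ \emph{before} invoking the lemma. Replace $\mathring\Gamma_{H,H}$ by the completion $\widehat\Gamma_H$ supplied by induction, and enlarge the graph to $(V,(H\times H)\cup E)$. This graph is made of the two cliques $H$ and $C_m$, so every $k\in S_m$ is now adjacent to everything. Lemma~\ref{lem:MLEformula} then applies verbatim; since both blocks are strictly CND, it also gives existence. It yields $[\theta(\widehat\Gamma_H)]^d+[\theta(\mathring\Gamma_{C,C})]^d-[\theta(\mathring\Gamma_{S,S})]^d$.

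You must then check that this solves the original problem on $G$. The paper cites \citet[Proposition~4.4]{HES2022}, but it can also be seen directly:
\begin{itemize}
\item Every edge of $G$ lies in $H$ or in $C_m$, so the prescribed entries agree.
\item Non-edges between $H\setminus S$ and $C_m\setminus S$ are already non-edges of the enlarged graph.
\item A non-edge $(i,j)$ of $G$ inside $H$ cannot have both endpoints in the clique $C_m$, so the formula gives $\Theta_{ij}=\theta(\widehat\Gamma_H)_{ij}=0$.
\end{itemize}

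Alternatively, you could take existence and uniqueness of the $G$-completion $\widehat\Gamma$ from \citet{HES2022}. Your identity then does follow from a purely algebraic fact: a positive definite $\Sigma$ whose inverse vanishes on the block $(H\setminus S)\times(C_m\setminus S)$ satisfies $\Sigma^{-1}=[(\Sigma_{H\setminus\{k\}})^{-1}]^{d-1}+[(\Sigma_{C_m\setminus\{k\}})^{-1}]^{d-1}-[(\Sigma_{S\setminus\{k\}})^{-1}]^{d-1}$. Apply this to $\Sigma=\varphi_k(\widehat\Gamma)$, and use \eqref{eq:Theta_sub_separated} together with uniqueness to identify $\widehat\Gamma_{H,H}$ with the $G[H]$-completion. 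That is not the justification you gave, however.
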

 \begin{proof}
 By \citet[Proposition~2.17]{Lauritzen96}, the cliques of any chordal graph satisfy the running intersection property, that is, the cliques allow an ordering $C_1,\ldots,C_m$ with $m=\#\mathcal{C}$ such that there exists for every $j=2,\ldots m$ an $i < j$ with
\[C_j \cap \bigcup_{\ell=1}^{j-1} C_\ell= C_j\cap C_i. \]
This means that the intersection with all previously seen cliques in the sequence lies in only one clique.
The nonempty intersections form the set of separating cliques $\mathcal{S}$.
Following \citet{HES2022}, let $\Gamma_1=\mathring{\Gamma}_{C_1,C_1}$, $V_n=C_1\cup \ldots\cup C_n$ and $K_n=V_n\times V_n$. 
For $n=2,\ldots,m$, let $\mathring{\Gamma}_n \in \mathring{\RR}^{\#V_n\times \#V_n}$ with
zeros on the diagonal and otherwise given by 
\[(\mathring{\Gamma}_n)_{ij}=\begin{cases}
    (\Gamma_{n-1})_{ij}, & (i,j)\in K_{n-1},\\
    \mathring{\Gamma}_{ij}, & (i,j) \in E|_{V_n} \setminus K_{n-1},\\ 
    ?,& \text{otherwise.}
\end{cases}\]
Let $\Gamma_n$ denote the solution to the matrix completion problem with partially-specified CND matrix $\mathring{\Gamma}_n$ and graph $G_n=(V_n, K_{n-1}\cup E_{|V_n} )$.
By Lemma~\ref{lem:MLEformula}, the H\"usler--Reiss precision matrix corresponding to $\Gamma_n$ is
\begin{align*}
    \theta(\Gamma_n)&=\big[\theta((\mathring{\Gamma}_{n})_{V_{n-1},V_{n-1}})\big]^{\# V_n}+\big[\theta((\mathring{\Gamma}_{n})_{C_n,C_n})\big]^{\#V_n}-\big[\theta((\mathring{\Gamma}_{n})_{V_{n-1}\cap C_n,V_{n-1}\cap C_n})\big]^{\#V_n}\\
    &=\big[\theta(\Gamma_{n-1})\big]^{\#V_n}+\big[\theta(\mathring{\Gamma}_{C_n,C_n})\big]^{\#V_n}-\big[\theta(\mathring{\Gamma}_{V_n\cap C_n, V_n\cap C_n})\big]^{\#V_n}.
\end{align*}
Now, by \citet[Proposition~4.4]{HES2022}, $\Gamma_m$ exists and is the unique solution of the completion problem with partially-specified conditionally negative definite matrix $\mathring{\Gamma}$ and graph $G$. 
The theorem follows by recursively applying this solution.
 \end{proof}

\begin{figure}
\begin{tikzpicture}[node distance=20mm]

\node[node] (2) at (0,2) {2};
\node[node] (5) at (4,2) {5};

\node[node] (1) at (-2,0) {1};
\node[node] (4) at (2,0) {4};

\node[node] (3) at (0,-2) {3};
\node[node] (6) at (4,-2) {6};

\draw[edge] (1) -- (2);
\draw[edge] (1) -- (3);
\draw[edge] (4) -- (5);
\draw[edge] (4) -- (6);
\draw[edge] (1) -- (4);
\draw[edge] (2) -- (4);
\draw[edge] (3) -- (4);

\end{tikzpicture}\caption{A decomposable graph}\label{fig:fish}
\end{figure}
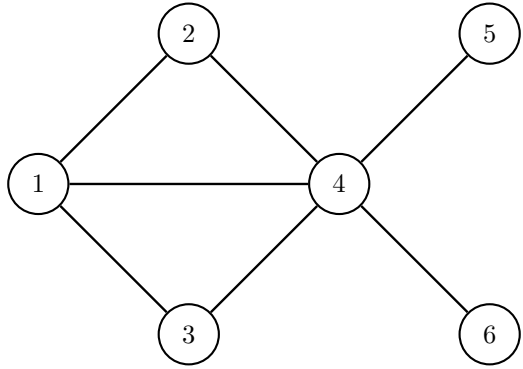
We provide the following example to illustrate Theorem~\ref{thm:chordalmld1}.
\begin{ex}
    Let $G$ be the graph in Figure~\ref{fig:fish} and assume that we are given a partially-specified CND variogram matrix $\mathring{\Gamma}$ with respect to $G$.
The parameter matrix of the graphical model is
\[
\Theta =
\begin{pmatrix}
\Theta_{11} & \Theta_{12} & \Theta_{13} & \Theta_{14} & 0 & 0 \\
\Theta_{21} & \Theta_{22} & 0 & \Theta_{24} & 0 & 0 \\
\Theta_{31} & 0 & \Theta_{33} & \Theta_{34} & 0 & 0 \\
\Theta_{41} & \Theta_{42} & \Theta_{43} & \Theta_{44} & \Theta_{45} & \Theta_{46} \\
0 & 0 & 0 & \Theta_{54} & \Theta_{55} & 0 \\
0 & 0 & 0 & \Theta_{64} & 0 & \Theta_{66}
\end{pmatrix},
\]
with zero entries specified by missing edges in $G$.
The graph $G$ has cliques
\[C_1=\{1,2,4\},\qquad C_2=\{1,3,4\},\qquad C_3=\{4,5\},\qquad C_4=\{4,6\}, \]
and separating cliques
\[S_1=\{1,4\},\qquad S_2=\{4\}\]
with multiplicities
\[\nu(S_1)=1,\qquad \nu(S_2)=2.\]
Using Theorem~\ref{thm:chordalmld1}, we find the following formula for the matrix completion problem.
For convenience we write $\mathring{\Gamma}_{C}$
instead of $\mathring{\Gamma}_{C,C}$.
\begin{align*}
    \widehat{\Theta}&=
\begin{pmatrix}
\theta_{11}(\mathring{\Gamma}_{C_1}) & \theta_{12}(\mathring{\Gamma}_{C_1}) & 0 & \theta_{14}(\mathring{\Gamma}_{C_1}) & 0 & 0 \\
\theta_{12}(\mathring{\Gamma}_{C_1}) & \theta_{22}(\mathring{\Gamma}_{C_1}) & 0 & \theta_{24}(\mathring{\Gamma}_{C_1}) & 0 & 0 \\
0 & 0 & 0 & 0 & 0 & 0 \\
\theta_{14}(\mathring{\Gamma}_{C_1}) & \theta_{24}(\mathring{\Gamma}_{C_1}) & 0 & \theta_{44}(\mathring{\Gamma}_{C_1}) & 0 & 0 \\
0 & 0 & 0 & 0 & 0 & 0 \\
0 & 0 & 0 & 0 & 0 & 0
\end{pmatrix}
\;+\;
\begin{pmatrix}
\theta_{11}(\mathring{\Gamma}_{C_2}) & 0 & \theta_{13}(\mathring{\Gamma}_{C_2}) & \theta_{14}(\mathring{\Gamma}_{C_2}) & 0 & 0 \\
0 & 0 & 0 & 0 & 0 & 0 \\
\theta_{13}(\mathring{\Gamma}_{C_2}) & 0 & \theta_{33}(\mathring{\Gamma}_{C_2}) & \theta_{34}(\mathring{\Gamma}_{C_2}) & 0 & 0 \\
\theta_{14}(\mathring{\Gamma}_{C_2}) & 0 & \theta_{34}(\mathring{\Gamma}_{C_2}) & \theta_{44}(\mathring{\Gamma}_{C_2}) & 0 & 0 \\
0 & 0 & 0 & 0 & 0 & 0 \\
0 & 0 & 0 & 0 & 0 & 0
\end{pmatrix}\\
&\;+\; \begin{pmatrix}
0 & 0 & 0 & 0 & 0 & 0 \\
0 & 0 & 0 & 0 & 0 & 0 \\
0 & 0 & 0 & 0 & 0 & 0 \\
0 & 0 & 0 & \theta_{44}(\mathring{\Gamma}_{C_3}) & \theta_{45}(\mathring{\Gamma}_{C_3}) & 0 \\
0 & 0 & 0 & \theta_{45}(\mathring{\Gamma}_{C_3}) & \theta_{55}(\mathring{\Gamma}_{C_3}) & 0 \\
0 & 0 & 0 & 0 & 0 & 0
\end{pmatrix}
\;+\; \begin{pmatrix}
0 & 0 & 0 & 0 & 0 & 0 \\
0 & 0 & 0 & 0 & 0 & 0 \\
0 & 0 & 0 & 0 & 0 & 0 \\
0 & 0 & 0 & \theta_{44}(\mathring{\Gamma}_{C_4}) & 0 & \theta_{46}(\mathring{\Gamma}_{C_4}) \\
0 & 0 & 0 & 0 & 0 & 0 \\
0 & 0 & 0 & \theta_{46}(\mathring{\Gamma}_{C_4}) & 0 & \theta_{66}(\mathring{\Gamma}_{C_4})
\end{pmatrix}\\
&\;-1\;\begin{pmatrix}
\theta_{11}(\mathring{\Gamma}_{S_1}) & 0 & 0 & \theta_{14}(\mathring{\Gamma}_{S_1}) & 0 & 0 \\
0 & 0 & 0 & 0 & 0 & 0 \\
0 & 0 & 0 & 0 & 0 & 0 \\
\theta_{14}(\mathring{\Gamma}_{S_1}) & 0 & 0 & \theta_{44}(\mathring{\Gamma}_{S_1}) & 0 & 0 \\
0 & 0 & 0 & 0 & 0 & 0 \\
0 & 0 & 0 & 0 & 0 & 0
\end{pmatrix} \;-2\;\begin{pmatrix}
0 & 0 & 0 & 0 & 0 & 0 \\
0 & 0 & 0 & 0 & 0 & 0 \\
0 & 0 & 0 & 0 & 0 & 0 \\
0 & 0 & 0 & 0 & 0 & 0 \\
0 & 0 & 0 & 0 & 0 & 0 \\
0 & 0 & 0 & 0 & 0 & 0
\end{pmatrix}.
\end{align*}
\end{ex}

\begin{prop}\label{prop:bipartite}
The extremal ML degree of the complete bipartite graph $K_{2,n}$ is $2n$.
\end{prop}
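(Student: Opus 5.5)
The plan is to reduce the completion problem for $K_{2,n}$ to a single univariate polynomial equation in the one unknown variogram entry that is not pinned down by a separate sub-problem, namely $t=\Gamma_{ab}$. Label the two vertices of the small side by $a,b$ and the other side by $1,\dots,n$. The edges are exactly $(a,i)$ and $(b,i)$ for $i\in[n]$. The completion problem therefore prescribes $g_i=\mathring\Gamma_{ai}$ and $h_i=\mathring\Gamma_{bi}$, and it requires $\Theta_{ab}=0$ and $\Theta_{ij}=0$ for all $i\neq j$ in $[n]$.

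\textbf{Step 1: pass to the covariance picture.} I will apply the covariance mapping \eqref{eq:Sigma^kFromGamma} with $k=a$. This yields a symmetric matrix $\Sigma^{(a)}$ on $\{b,1,\dots,n\}$ with inverse $\Theta^{(a)}$, the matrix $\Theta$ with row and column $a$ deleted. The map $\Gamma\mapsto\Sigma^{(a)}$ is a linear bijection, so no solutions are gained or lost. In these coordinates the data read as follows:
\begin{itemize}[label={}]
\item $\Sigma^{(a)}_{ii}=g_i$;
\item $\Sigma^{(a)}_{bb}=t$;
\item $\Sigma^{(a)}_{bi}=s_i(t):=\tfrac12(t+g_i-h_i)$.
\end{itemize}
The zero constraints become two conditions. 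First, $\Theta^{(a)}$ has the sparsity pattern of the star with centre $b$. Second, since $\Theta\ones=\zeros$, the condition $\Theta_{ab}=0$ becomes the linear condition $\Theta^{(a)}_{bb}+\sum_i\Theta^{(a)}_{bi}=0$.

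\textbf{Step 2: solve the star completion for fixed $t$.} For fixed $t$ this is a Gaussian completion for a star, which is a chordal graph. Hence it has a unique solution, given explicitly either by \citet[Proposition~5.6]{Lauritzen96} or by direct inversion. Writing $D_i(t)=g_it-s_i(t)^2$, the solution is
\[
\Theta^{(a)}_{bi}=-\frac{s_i}{D_i},\qquad \Theta^{(a)}_{bb}=\frac1t+\sum_{i}\frac{s_i^2}{tD_i}.
\]
Substituting into the row-sum condition and multiplying by $t\prod_jD_j$ gives the polynomial equation
\[
P(t):=\prod_{j}D_j+\sum_{i} s_i(s_i-t)\prod_{j\neq i}D_j=0 .
\]
Each $D_j$ and each $s_i(s_i-t)$ is quadratic in $t$, with leading coefficient $-\tfrac14$ in both cases. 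The leading coefficient of $P$ is therefore $(n+1)(-\tfrac14)^n\neq0$, so $\deg P=2n$.

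\textbf{Step 3: genericity, which I expect to be the main obstacle.} Three facts are needed for generic $(g,h)$:
\begin{enumerate}
\item $P$ has $2n$ distinct roots.
\item No root satisfies $t=0$ or $D_i(t)=0$. Otherwise $\Sigma^{(a)}$, or a clique block of it, would be singular, and the clearing of denominators would have introduced spurious roots.
\item Every such root gives a genuine solution. This holds by the bijectivity in Steps 1 and 2.
\end{enumerate}
For (2), I will check that $P(0)$ and the resultants $\mathrm{Res}_t(P,D_i)$ are nonzero polynomials in $(g,h)$. For instance, at a root of $D_i$ the polynomial $P$ reduces to $s_i(s_i-t)\prod_{j\ne i}D_j$, which generically does not vanish there.

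For (1), the cleanest route is to show that the discriminant of $P$ is not identically zero by exhibiting one explicit parameter choice, possibly complex, with $2n$ simple roots. A natural choice is $h_i=g_i$, so that $s_i=t/2$. Then $D_i=t(g_i-t/4)$, $P$ factors substantially, and the remaining factor can be analysed for distinct $g_i$. If that specialization is too degenerate, because it creates forced roots at $t=0$, I will instead perturb it, or argue by induction on $n$ through a limit in which one $g_n\to\infty$ splits off two simple roots.

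Once (1) and (2) hold, the number of complex solutions for generic data is exactly $2n$, which proves the proposition.
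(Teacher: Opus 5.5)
Your route is the paper's, in covariance coordinates instead of $\widetilde{\CM}$. The paper also keeps $\Gamma_{ab}$ (its $y$) as the only free unknown. It eliminates the leaf--leaf entries using the diagonality of $\Theta_{[n],[n]}$: your $\Sigma^{(a)}_{ij}=s_is_j/t$ is its formula $z_{ij}=\tfrac12\bigl(-t+p_i+p_j-c_ic_j/t\bigr)$ with $p_i=g_i+h_i$, $c_i=g_i-h_i$ (the paper's printed numerator should read $(g_{i,n+1}-g_{i,n+2})(g_{j,n+1}-g_{j,n+2})$). It then turns $\Theta_{ab}=0$ into a degree-$2n$ equation. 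Your Steps 1--2 are correct, and your leading coefficient $(n+1)(-\tfrac14)^n$ is more than the paper verifies.

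The gap is Step 3. The paper's proof stops at the degree count, and you leave this step as a plan whose one concrete ingredient fails. With $h_i=g_i$ you get $D_i=t(g_i-t/4)$ and $s_i(s_i-t)=-t^2/4$, hence $t^n\mid P$ and the discriminant vanishes at that point. Two more things are missing. First, Step~2 only treats $t\neq 0$, so solutions with $\Gamma_{ab}=0$ must still be excluded. Second, item (2) is false for $n=1$. One computes $P(0)=(1-n)\prod_j(-c_j^2/4)$, which vanishes identically when $n=1$. This is consistent with $K_{2,1}$ being a chordal path of extremal ML degree $1$, so the statement needs $n\ge 2$.

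Here is one way to close Step 3. Since $D_i+s_i(s_i-t)=t(g_i-s_i)$ and $D_i=-\tfrac14(t-\alpha_i)(t-\beta_i)$ with $\alpha_i,\beta_i=(\sqrt{g_i}\pm\sqrt{h_i})^2$, your polynomial factors as
\[
P(t)=\Bigl(-\tfrac14\Bigr)^{n}\,t\prod_{j}(t-\alpha_j)(t-\beta_j)\Bigl[\sum_{i}\Bigl(\frac{1}{t-\alpha_i}+\frac{1}{t-\beta_i}\Bigr)-\frac{n-1}{t}\Bigr].
\]
Take real data with $g_i,h_i>0$ and the $2n+1$ numbers $0,\alpha_i,\beta_i$ pairwise distinct. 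On each of the $2n-1$ gaps between consecutive values among the $\alpha_i,\beta_i$ (all positive), the bracket runs from $+\infty$ to $-\infty$. On $(-\infty,0)$ it runs from $0^-$ to $+\infty$, which uses $n\ge 2$. This gives $2n$ simple real roots, none equal to $0$ or to a root of any $D_i$. Hence $\operatorname{disc}_t(P)\cdot P(0)\cdot\prod_i\operatorname{Res}_t(P,D_i)$ is a nonzero polynomial in $(g,h)$, which settles your items (1) and (2) generically.

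For the converse direction, let $L=[n]$.
\begin{itemize}
\item If $t=0$, then $\det\Theta^{(a)}_{L,L}=\Sigma^{(a)}_{bb}\det\Theta^{(a)}=0$, so some $\Theta^{(a)}_{kk}=0$. The $k$th column of $\Theta^{(a)}$ is then a nonzero multiple of $e_b$, so $\Sigma^{(a)}e_b$ is a multiple of $e_k$. This forces $c_j=2\Sigma^{(a)}_{jb}=0$ for all $j\neq k$, which is non-generic.
\item If $t\neq 0$, then $(\Theta^{(a)}_{L,L})^{-1}$ is the Schur complement $\Sigma^{(a)}_{L,L}-\Sigma^{(a)}_{L,b}\Sigma^{(a)}_{b,L}/t$. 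This must therefore be diagonal with entries $D_i/t\neq 0$.
\end{itemize}
So every genuine solution is one of your roots, and the count is exactly $2n$.
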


The proof follows the argument used in the proof of \citet[Theorem 4.3]{GeometryML12}, where the Gaussian ML degree of $K_{2,n}$ is shown to be $2n+1$.
The difference in the number of solutions is due to the fact that
the matrix $\CM(\Gamma)$ used in H\"usler--Reiss models
is augmented by a row and column of $1$-vectors and a $0$ on the new diagonal position.
Thus the degree of the minors of interest are reduced by one from the Gaussian case.

\begin{proof}
Instead of working with the standard form of the matrix $\CM (\Gamma)$,
again we use the form $\widetilde{\CM}(\Gamma)$ as in \eqref{eq:tildeCM}.
The analogous formula to \eqref{eq:FiedlerBapat} is given by \citet[Equation 11]{FJA2023}, namely
\begin{equation*}
   \widetilde{\CM} (\Gamma)\;=\; -2 \begin{pmatrix}
	\Theta&* \\
     *& *\\
\end{pmatrix}^{-1},
\end{equation*}
where stars are rational in the entries of $\Gamma$.

We write the matrix $\widetilde{\CM} (\Gamma)$ in block form, setting $A=\{1,\dots, n\}$
for the entries corresponding to the first $n$ nodes of the bipartite graph (without any edge between them),
and $B = \{n+1, n+2\}$ for the remaining two variables, plus the last row/column in the matrix.
We use $g_{ij}$ for the (known) entries of $\widetilde{\CM} (\Gamma)$ corresponding to edges $(i,j)$ and fill the diagonal with zeros,
while we use $y$ for the entry $(n+1,n+2)$ and $z_{ij}$ for all remaining nonedges.
Thus $\widetilde{\CM} (\Gamma)$ has the following block form
\begin{equation*}
\widetilde{\CM} (\Gamma) = 
\left(\begin{array}{cccc|ccc}
0 & z_{12} & \cdots & z_{1n} & g_{1n+1} & g_{1n+2} & 1\\
z_{12} & 0 & \cdots & z_{2n} & g_{2n+1} & g_{2n+2} & 1\\
\vdots & \vdots & \ddots & \vdots & \vdots & \vdots & \vdots\\
z_{13} & z_{23} & \cdots & 0 & g_{34} & g_{35} & 1\\
\hline
g_{14} & g_{24} & \cdots& g_{3n} & 0 & y & 1\\
g_{15} & g_{25} & \cdots& g_{3n} & y & 0 & 1\\
1 & 1 & \cdots & 1 & 1 & 1 & 0\\
\end{array}\right).
\end{equation*}
The goal is to find the number of matrix completions in the unknowns $y$ and $z_{ij}$.
The Schur complement
\[
 \Gamma_{A,A} -  \begin{pmatrix}
     \Gamma_{A,B}&\ones
 \end{pmatrix}\widetilde{\CM} (\Gamma_{B,B})^{-1}\begin{pmatrix}
     \Gamma_{B,A}\\
     \ones ^\top
 \end{pmatrix}
\]
has the same zeros as the $(A,A)$ part of the  inverse matrix $\widetilde{\CM} (\Gamma)^{-1}$.
Since $ \Theta_{AA}$ is a diagonal matrix,
all nondiagonal entries of the Schur complement above are zero.
This gives rise to equations
\begin{equation}
\label{eq:zterms}
    z_{ij} =  \tfrac{1}{2}(-y + g_{i,n+1} + g_{i,n+2} + g_{j,n+1} +g_{j,n+2}- \frac{g_{i,n+1}g_{j,n+1}-g_{i,n+1}g_{i,n+2}-g_{i,n+2}g_{j,n+1}+g_{i,n+2}g_{j,n+2}}{y})
\end{equation}
for all nonedges $(i,j)$ except $(n+1,n+2)$.
Furthermore, $\Theta_{n+1,n+2} = 0$,
therefore the minor $\det(\widetilde{\CM} (\Gamma_{\backslash (n+1),\backslash (n+2)}) )$ vanishes,
giving rise to a homogeneous polynomial in $z_{ij}$ of degree $n$.
Multiplying this polynomial by $y^n$
and getting rid of $z_{ij}$ by means of Equations \eqref{eq:zterms}
(substituting terms $yz_{ij}$ with degree $2$ polynomials in $y$ only)
gives an equation in $y$ of degree $2n$.
The remaining terms $z_{ij}$ are then linearly recoverable.
\end{proof}

\begin{ex}\label{ex:4cycle}
    Let $G=C_4$ be the undirected 4-cycle. This is the smallest non-chordal graph, and can be interpreted as the complete bipartite graph $K_{2,2}$ on partite sets $\{1,3\}$ and $\{2,4\}$. Thus, as a special case of Proposition \ref{prop:bipartite} when $n=2$, we obtain that the extremal ML degree of the $4$-cycle is now $2\cdot2 = 4$, as opposed to the Gaussian ML degree of $5$. This means that the extremal ML estimation problem for the 4-cycle is solvable by radicals whereas the Gaussian ML estimation problem is not.
\end{ex}

The matrix completion problem for variogram matrices (see Definition~\ref{def:matrixcompl}) can always be rephrased in terms of polynomial equations and inequalities in $\Sigma^{(k)}$ and $\Theta^{(k)}$ for any $k\in V$. We additionally impose linear constraints on the precision matrix in the form of vanishing row sums. However, if $k$ is connected to all other nodes, this is equivalent to a positive definite completion problem with respect to $G\setminus k$.
 Indeed, in the case of suspension graphs, we can relate the extremal ML-degree to the ML-degree of a Gaussian problem in the following way. We denote by $\mathrm{MLD}(G)$ the maximum likelihood degree of the Gaussian graphical model associated to $G$; this is the degree of the corresponding \emph{positive definite} matrix completion problem, as described in \citet{sturmfels2010multivariate}.

\begin{prop}
For any graph $G$, the extremal ML degree of its suspension equals its Gaussian ML degree:
    $$\emld(G \cup \{0\}) = \mld(G).$$
\end{prop}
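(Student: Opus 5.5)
The plan is to reduce the extremal completion problem for the suspension $G\cup\{0\}$ (vertex $0$ joined to every vertex of $G$) to the Gaussian positive definite completion problem for $G$, using the covariance mapping \eqref{eq:Sigma^kFromGamma} with $k=0$. Write $G=([d],E)$, so the suspension has vertex set $\{0\}\cup[d]$ and edge set $E\cup\{(0,i): i\in[d]\}$. Generic data consist of $\mathring\Gamma_{0i}$ for all $i$ and $\mathring\Gamma_{ij}$ for $(i,j)\in E$. I will construct a bijection between complex solutions of the two polynomial systems. Generic partial data on one side should correspond to generic partial data on the other, so the two solution counts agree.

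\textbf{Step 1 (data).} Define $S_{ij}=\frac12(\mathring\Gamma_{i0}+\mathring\Gamma_{j0}-\mathring\Gamma_{ij})$ for $(i,j)\in E$ and $S_{ii}=\mathring\Gamma_{i0}$. This is exactly the set of entries of $\varphi_0(\mathring\Gamma)$ indexed by the diagonal and the edges of $G$. The map $(\mathring\Gamma_{0i},\mathring\Gamma_{ij})\mapsto(S_{ii},S_{ij})$ is an invertible linear change of coordinates. Hence generic extremal data correspond to generic Gaussian partial covariance data $S$ on $G$.

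\textbf{Step 2 (unknowns and constraints).} The map $\varphi_0$ is a linear bijection from symmetric zero-diagonal $(d+1)\times(d+1)$ matrices $\Gamma$ to symmetric $d\times d$ matrices $\Sigma^{(0)}$. Under it:
\begin{itemize}
\item the equations $\Gamma_{ij}=\mathring\Gamma_{ij}$ on the edges of the suspension become exactly $\Sigma^{(0)}_{ii}=S_{ii}$ and $\Sigma^{(0)}_{ij}=S_{ij}$ for $(i,j)\in E$;
\item the remaining unknowns $\Gamma_{ij}$, $(i,j)\notin E$, correspond bijectively to $\Sigma^{(0)}_{ij}$, $(i,j)\notin E$.
\end{itemize}
For the zero constraints, I use that $\Theta^{(0)}=(\Sigma^{(0)})^{-1}$. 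This holds on the open set where $\CM(\Gamma)$ is invertible, equivalently where $\Sigma^{(0)}$ is invertible, by \citet[Equation~(9)]{REZ2021} or Lemma 4.3 of \citet{HES2022}, extended algebraically. Every nonedge of the suspension has the form $(i,j)$ with $i,j\in[d]$, $(i,j)\notin E$, because $0$ is adjacent to everything. So the constraints $\Theta_{ij}=0$ are exactly $((\Sigma^{(0)})^{-1})_{ij}=0$ for nonedges of $G$. The entries $\Theta_{0j}$ and $\Theta_{00}$ are determined by the zero row-sum condition and carry no constraint. The system is therefore literally the Gaussian ML system of \citet{sturmfels2010multivariate} for $G$ with data $S$.

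\textbf{Step 3 (counting).} Both ML degrees count complex solutions on which the relevant matrix is invertible. I must check that invertibility of $\Sigma^{(0)}$ matches the nondegeneracy implicit in the extremal definition, namely $\Theta$ of rank $d$, equivalently $\det\CM(\Gamma)\neq0$. This follows from $\det\CM(\Gamma)=c\cdot\det\varphi_0(\Gamma)$ for a nonzero constant $c$. I will verify this identity by row and column operations: subtract row and column $0$ from the others, then use the all-ones border. With it, solutions correspond bijectively and $\emld(G\cup\{0\})=\mld(G)$.

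The main obstacle is this last bookkeeping step: confirming the determinant identity, and confirming that the extremal definition (via $\Theta$ as a pseudoinverse, or via $\CM(\Gamma)^{-1}$) really has the same nondegenerate solution set over $\mathbb{C}$ as the Gaussian one. I would handle this by working throughout with $\CM(\Gamma)^{-1}$ and the Fiedler--Bapat identity \eqref{eq:FiedlerBapat}, whose upper-left block is $\Theta$, rather than with pseudoinverses.
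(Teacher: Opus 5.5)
Your proposal is correct and follows the same route as the paper. Because vertex $0$ is adjacent to every other vertex, all zero constraints on $\Theta$ lie in $\Theta^{(0)}=(\Sigma^{(0)})^{-1}$, so the covariance mapping $\varphi_0$ turns the extremal completion problem for the suspension into the positive definite completion problem for $G$ with data $\varphi_0(\mathring\Gamma)$. Your extra bookkeeping fills in details the paper's short proof leaves implicit: the invertible linear change of data coordinates that preserves genericity, and the identity $\det\CM(\Gamma)=-\det\varphi_0(\Gamma)$ from your row and column operations, which shows the nondegenerate complex solution sets match.
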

\begin{proof}
    In the graph $G\cup\{0\}=(V_0,E_0)$, the node $0$ is connected to any other node. Thus, $\Gamma_{i0}=\overline{\Gamma}_{i0}$ for all $i\in V$, i.e.~there are no zero constraints imposed in the $0$-th row and column of $\Theta$. It follows that the matrix completion problem
    \begin{align*}
    \Gamma_{ij}&=\overline{\Gamma}_{ij}, \qquad\forall (i,j)\in E_0,\\
    \Theta_{ij}&=0, \qquad\forall (i,j)\not\in E_0,
\end{align*}
is equivalent to the completion problem
\begin{align*}
    \Sigma_{ij}^{(0)}&=S_{ij}^{(0)}, \qquad\forall (i,j)\in E,\\
    \Theta_{ij}^{(0)}&=0, \qquad\forall (i,j)\not\in E,
\end{align*}
where $\Sigma^{(0)}=\varphi_0(\Gamma)$ and $S^{(0)}=\varphi_0(\overline{\Gamma})$. 
The latter is the positive definite completion problem with respect to $G$.
Hence, the \emld{} of the suspension graph $G\cup \{0\}$ is equal to the \mld{} of $G$.
\end{proof}

The examples above suggest the following conjecture.
\begin{conj}\label{conj:degree}
    For any graph $G$, the extremal ML degree is at most its Gaussian ML degree: 
    \begin{equation}\label{eq:conj-ineq}
        \emld(G) \leq \mld(G).
    \end{equation}
\end{conj}

Note that in \eqref{eq:conj-ineq}, equality holds for chordal graphs by Corollary \ref{cor:one} (both sides are 1), while the inequality can also be strict as is the case for $K_{2,n}$ from Proposition \ref{prop:bipartite} (both sides differ by 1). Moreover, we observe that the difference between the two ML degrees can grow exponentially with $n$:

\begin{prop}\label{prop:mldrelations}
    The following results hold for the Gaussian ML degree and the extremal ML degree of the $n$-cycle $C_n$, for all $n\geq 3$:
\begin{enumerate}
    \item[(i)] $\mld(C_n)-\emld(C_n) = 2^{n-2}(n-5)+n+1$
    \item[(ii)] $\mld(C_n) = \emld(C_n)^2 - \emld(C_{n-1})\cdot \emld(C_{n+1}).$
\end{enumerate}    
\end{prop}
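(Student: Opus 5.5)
The plan is to reduce both identities to one closed formula, $\emld(C_n) = 2^{n-1}-n$, and to combine it with the known Gaussian value $\mld(C_n) = (n-3)2^{n-2}+1$, which was conjectured by Drton--Sturmfels--Sullivant and has since been proved in the literature. I will assume this Gaussian formula as an external input.

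Granting both formulas, the statement is pure arithmetic. For (i),
\[
(n-3)2^{n-2}+1-(2^{n-1}-n) = 2^{n-2}(n-5)+n+1 .
\]
For (ii), put $e_n = 2^{n-1}-n$ and expand:
\[
e_n^2 - e_{n-1}e_{n+1} = \bigl(4^{n-1}-n2^n+n^2\bigr) - \bigl(4^{n-1}-(n+1)2^{n-2}-(n-1)2^n+n^2-1\bigr).
\]
This simplifies to $(n-3)2^{n-2}+1 = \mld(C_n)$. As sanity checks, $e_3 = 1$ matches Corollary~\ref{cor:one} ($C_3 = K_3$ is chordal), and $e_4 = 4$ matches Example~\ref{ex:4cycle}.

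The real work is proving $\emld(C_n) = 2^{n-1}-n$, which is equivalent to $e_3 = 1$ together with the recurrence $e_{n+1} = 2e_n + n - 1$. I would proceed by induction on $n$, following the structure of the Gaussian cycle computation. First, fix a vertex $k$ of $C_{n+1}$ and pass to $\Sigma^{(k)} = \varphi_k(\Gamma)$ via the covariance mapping~\eqref{eq:Sigma^kFromGamma}. The completion problem then becomes a Gaussian-type completion for the path $C_{n+1}\setminus k$, with the extra rational constraint that $\Theta^{(k)}$ has row sums equal to zero outside the two neighbours of $k$. This is the same mechanism by which the degree dropped from $2n+1$ to $2n$ in Proposition~\ref{prop:bipartite}.

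Next, I would parametrize the completion of the path part explicitly. By Theorem~\ref{thm:chordalmld1}, any path has a unique rational completion, so after adding one unknown entry the path completion can be written in closed form. The remaining nonedge equations then form a small system: a single unknown variogram entry $y = \Gamma_{ij}$ across a chord. Contracting the cycle along that chord splits it into a triangle glued to $C_n$. Theorem~\ref{thm:multiplicative} gives the solutions of the glued problem as $e_n$ completions, each rational in $y$, and imposing the last zero $\Theta_{ij}=0$ yields a univariate equation in $y$. I expect this equation to have degree $2e_n + (n-1)$ after clearing denominators, once the spurious roots are removed. The spurious roots are those where $\det\CM$ vanishes, or where the parametrization degenerates at $y=0$ as in~\eqref{eq:zterms}.

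The main obstacle is exactly this degree and excess count: showing that precisely $n-1$ extra solutions beyond $2e_n$ survive, and that generically all roots are simple and nondegenerate. If the direct elimination becomes unmanageable, my fallback is Huh's theorem: the extremal ML degree equals the signed Euler characteristic of the very affine variety obtained by removing the pseudo-determinant hypersurface $\{\Det\Theta = 0\}$ and the coordinate hyperplanes from the $n$-dimensional space of Laplacians of $C_n$. For the cycle, $\Det\Theta$ is $n$ times the spanning-tree polynomial $\sum_{i} \prod_{j\neq i} w_j$. Its complement's Euler characteristic can be computed by fibering over one edge weight, which should again produce the recurrence $e_{n+1} = 2e_n + n - 1$.
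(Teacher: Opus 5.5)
Your reduction and the arithmetic for (i) and (ii) are correct, and that part is exactly the paper's proof. The paper takes $\mld(C_n)=(n-3)2^{n-2}+1$ from \citet{dinu2025proof} and $\emld(C_n)=2^{n-1}-n$ from \cite[Theorem~2.13]{cox2024homaloidal}, then does the same computation.

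The gap is the second formula, which you call ``the real work'' but do not prove. Your decisive claim is that imposing $\Theta_{ij}=0$ gives an equation of degree $2e_n+(n-1)$ after discarding spurious roots. That is just the recurrence you need, restated, and you yourself leave the excess count open. The setup behind it also does not work as stated. Theorem~\ref{thm:multiplicative} only counts the $e_n$ completions for each fixed chord value $y$; it gives no rational parametrization in $y$, since these completions are branches of an algebraic function of $y$. Moreover, the nonedges of the $C_n$ piece are still unknowns, so no univariate equation in $y$ of evident degree appears.

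The fallback via Huh's theorem is misformulated. The objective is $\log\tau(w)-\sum_i\overline{\Gamma}_i w_i$ with $\tau(w)=\sum_i\prod_{j\neq i}w_j$: a logarithm plus a generic linear form, not a master function with generic exponents. The variety you describe is stable under scaling, because both $\Det\Theta$ and the coordinate hyperplanes are homogeneous. So it is a $\mathbb{C}^*$-bundle with Euler characteristic $0$. Even after projectivizing, deleting coordinate hyperplanes gives the wrong number. For $C_4$, the map $w\mapsto w^{-1}$ identifies the torus complement of $\{\tau=0\}$ with the torus complement of a hyperplane, whose signed Euler characteristic is $1$, not $\emld(C_4)=4$. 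The correct topological formula (Dimca--Papadima) removes a generic hyperplane instead.

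To close the gap, cite \cite[Theorem~2.13]{cox2024homaloidal} as the paper does, or compute directly as follows. The critical equations say that the effective resistance across each edge $i$ equals $\overline{\Gamma}_i$. With $v_i=1/w_i$ and $s=\sum_j v_j$ this reads $v_i(s-v_i)=s\,\overline{\Gamma}_i$. Hence $v_i=\tfrac{s}{2}\bigl(1+\epsilon_i\sqrt{1-4\overline{\Gamma}_i t}\bigr)$ with $t=1/s$, and $\sum_i v_i=s$ becomes
\[
\sum_{i=1}^n\epsilon_i\sqrt{1-4\overline{\Gamma}_i t}=2-n,\qquad \epsilon\in\{\pm1\}^n .
\]
For generic data, the product over all $\epsilon$ of $2-n-\sum_i\epsilon_i\sqrt{1-4\overline{\Gamma}_i t}$ is a polynomial in $t$ of degree $2^{n-1}$. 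It has a spurious root of multiplicity exactly $n$ at $t=0$ (i.e.\ $s=\infty$), coming from the $n$ sign vectors with a single $+1$. For generic data every critical point has all $w_i\neq 0$ and the remaining roots are simple, so $\emld(C_n)=2^{n-1}-n$.

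Finally, at $n=3$ identity (ii) involves $\emld(C_2)$. This only makes sense with the convention $\emld(C_2):=2^{1}-2=0$, and you should say so.
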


\begin{proof}
    Both statements follow from straightforward computation by using the exact values for both ML degrees, namely:
    \begin{align}
        \mld(C_n)&= (n-3)2^{n-2}+1, \label{eq:mldcycle} \\
        \emld(C_n) &= 2^{n-1}-n. \label{eq:emldcycle}
    \end{align}
    Equation \eqref{eq:mldcycle} was conjectured by \citet{DSS2009} and proved recently by \citet{dinu2025proof}.
     Equation \eqref{eq:emldcycle} was proven in a different language in the context of homaloidal polynomials by \cite[Theorem 2.13]{cox2024homaloidal}.
\end{proof}

Proposition \ref{prop:mldrelations}(ii) suggests a deep connection between the two cyclic ML degrees, with some type of determinantal transformation between both sequences. We also note that our Conjecture \ref{conj:degree} would imply \cite[Conjecture 2.17]{cox2024homaloidal}, which is equivalent to the statement that a graph is chordal if and only if its extremal maximum likelihood degree is one. The fact that only chordal graphs have Gaussian ML degree one appears in \cite[Theorem 3]{sturmfels2010multivariate} and a full proof can be found in \citet{amendola2024maximum}. 

\section{Extremal Maximum Likelihood Thresholds} \label{sec:mlt}

In the Gaussian setting, the \emph{maximum likelihood threshold} of a graph is the number of data points that guarantees generic existence of the maximum likelihood estimator. In other words, it is the minimal $r$ such that the MLE exists for generic sample covariance matrices of rank $r$. In the setting of H\"usler-Reiss graphical models, the sample statistic for surrogate maximum likelihood estimation is the empirical variogram, 
see Equation~\eqref{eq:emp_vario}.
The empirical variogram is the average of sample variograms calculated for the subsamples on each positive canonical halfspace in the support of the \HR{} distribution.
But, as observations are typically included in many canonical half-spaces, there is no clear formula that determines the relationship between the dimensionality of the empirical variogram and the overall number of data points. However, for generic data points, the dimensionality of the empirical variogram does not decrease as the number of data points grows. That is, the dimensionality of the empirical variogram is at least weakly monotonic in the number of generic data points. Thus, we define the extremal maximum likelihood threshold as the minimal dimensionality of an empirical variogram matrix such that the surrogate MLE, as the solution of the matrix completion problem in Definition~\ref{def:matrixcompl}, exists with probability~1. 

The dimensionality of the variogram can be readily computed via the rank of the empirical covariance matrix. If one calculates the dimensionality of the empirical variogram matrix and finds that it is greater than or equal to the extremal maximum likelihood threshold, this provides a certificate the surrogate MLE exists almost surely. On the other hand, if the dimensionality is less than the extremal maximum likelihood threshold, then one may need more data points to ensure that the surrogate MLE exists. This is especially relevant in setting of multivariate extremes since extreme data points are harder to come by.

\begin{defi}
    The \emph{extremal maximum likelihood threshold} of $G$, denoted $\emlt(G)$, is the minimum $m$ such that a generic empirical variogram matrix of dimensionality $m$ admits a strictly CND completion. The \emph{weak} maximum likelihood threshold is the minimum $m$ such that there exists an empirical variogram matrix $\Gamma_0$ of dimensionality $m$ that admits a strictly CND completion. 
\end{defi}

The extremal maximum likelihood threshold can be interpreted as the minimum $m$ such that the surrogate \HR maximum likelihood estimate exists for a generic empirical variogram matrix of dimensionality $m$. Analogously, if the weak maximum likelihood threshold is less than or equal to $m$, then the surrogate MLE exists for a Euclidean open set of empirical variogram matrices of dimensionality $m$. 

Now we establish some bounds on the extremal maximum likelihood threshold of a graph. We require the following graph-theoretic definitions. For a graph $G$, the number of vertices in the largest complete subgraph of $G$ is called the \emph{clique number} of $G$ and is denoted $q(G)$. A \emph{chordal cover} of $G = (V,E)$ is a graph $G' = (V,E')$ on the same vertex set as $G$ such that $E \subset E'$ and $G'$ is chordal. Let $G'$ be the chordal cover of $G$ with the smallest clique number. Then the \emph{treewidth} of $G$ is $\tw(G) = q(G') -1$. These values bound the maximum likelihood threshold in the Gaussian setting as follows.

\begin{thm}[\citet{buhl1993existence}, Corollary~3.3]
\label{thm:ineqGaussian}
    In the Gaussian graphical model specified by $G$, the maximum likelihood threshold is bounded by:
    \[
    q(G) \leq \mathrm{MLT}(G) \leq \tw(G)+1.
    \]
\end{thm}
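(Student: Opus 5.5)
The two inequalities will be proved separately. Throughout, I use the standard identification of the Gaussian ML problem with positive definite matrix completion, as in the discussion preceding Definition~\ref{def:matrixcompl}. The MLE for $G$ exists for a sample covariance matrix $S$ if and only if the partial matrix $S_G$, consisting of the diagonal and the entries indexed by edges of $G$, admits a positive definite completion. A sample covariance of $r$ generic data points is a generic positive semidefinite matrix of rank $r$, i.e.\ $S = X^\top X$ with $X$ a generic $r\times d$ matrix.

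\emph{Lower bound.} Let $K$ be a clique of $G$ with $\#K = q(G)$, and suppose $r < q(G)$. Every completion $\Sigma$ of $S_G$ agrees with $S$ on the principal block $K\times K$, since all these entries are specified. That block is $S_{K,K} = X_{\cdot,K}^\top X_{\cdot,K}$, which has rank at most $r < \#K$, so it is singular. A positive definite matrix cannot have a singular principal submatrix, so no positive definite completion exists, and the MLE fails to exist for every $S$ of rank $r$. Hence $\mathrm{MLT}(G)\ge q(G)$.

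\emph{Upper bound.} Let $G'$ be a chordal cover of $G$ with $q(G') = \tw(G)+1$, and let $r \ge q(G')$.
\begin{enumerate}
\item For generic $X$ of size $r\times d$, every set of at most $r$ columns is linearly independent. In particular, for each clique $C$ of $G'$ we have $\#C\le r$, so $S_{C,C}$ is positive definite.
\item The restriction $S_{G'}$ is a partial matrix on a chordal graph whose fully specified principal submatrices (the cliques) are positive definite. By the classical chordal completion theorem (Grone--Johnson--S\'a--Wolkowicz; equivalently the existence of the MLE in decomposable models, \citet[Proposition~5.6]{Lauritzen96}), it admits a positive definite completion. For $r\ge q(G')$ this can also be seen constructively via the clique-gluing formula used in Lemma~\ref{lem:MLEformula}.
\item Since $E\subset E'$, this completion agrees with $S$ on $E$ and the diagonal, so it is also a positive definite completion of $S_G$.
\end{enumerate}
Hence the MLE exists generically for every $r\ge\tw(G)+1$, giving $\mathrm{MLT}(G)\le \tw(G)+1$.

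The step needing most care is the genericity claim in step 1: generic rank-$r$ data must make all clique blocks of size at most $r$ nonsingular. This holds because the set of $X$ where some such minor vanishes is a proper Zariski-closed subset. After that, the chordal completion theorem finishes the argument.
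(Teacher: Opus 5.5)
Your proof is correct. The paper gives no proof of this statement; it is quoted from Buhl. The natural comparison is therefore with the paper's proof of the extremal analogue, Theorem~\ref{thm:ineqHuesler}.

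Your lower bound is exactly the argument used there. A maximum clique block is forced to be singular, and positive definiteness passes to principal submatrices, just as strict conditional negative definiteness does in Proposition~\ref{prop:CNDsubmatrix}.

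For the upper bound, the paper does not invoke a completion theorem. It inducts on a clique-separator decomposition $A\cup C$, $B\cup C$ of the chordal cover and glues the two realizing simplices along their common face $C$, placing the $A$- and $B$-parts in complementary coordinates. This is in effect a hands-on geometric proof of the chordal completion theorem in the Euclidean-distance setting. You instead hand that step to Grone--Johnson--S\'a--Wolkowicz (equivalently, Lauritzen's existence result for decomposable models). This is shorter and makes the genericity requirement transparent: all that is needed is that every clique block of $G'$ be nonsingular, which holds off a proper Zariski-closed set once $r\ge q(G')$.

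Your formulation has a further advantage. You complete the whole pattern $E'$ and only then discard the entries outside $E$, so the separator block $C\times C$ is automatically fully prescribed. In the inductive gluing argument, the agreement of the two pieces on $C\times C$ has to be arranged separately, for instance by inducting on $G'[A\cup C]$ and $G'[B\cup C]$. This matters because $C$ is guaranteed to be a clique of $G'$ but not necessarily of $G$.
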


In this section, we prove analogous bounds in the extremal setting. The proof of the lower bound is exactly analogous. However, the interpretation of $\Gamma$ as a Euclidean distance matrix allows for a more streamlined proof of the upper bound in the extremal setting.

\begin{thm}
\label{thm:ineqHuesler}
    Let $G$ be a connected graph on $d$ vertices that is not a complete graph.
    In the H\"usler-Reiss graphical model specified by $G$, the extremal maximum likelihood threshold is bounded by:
    \[
    q(G)-1 \leq \emlt(G) \leq \tw(G).
    \]
\end{thm}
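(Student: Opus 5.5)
We prove the two inequalities separately. Both come down to one fact: a completion $\Gamma$ that is strictly CND is the Euclidean distance matrix of $d$ affinely independent points, so it has dimensionality $d-1$. Any principal submatrix $\Gamma_{D,D}$ of such a $\Gamma$ is also strictly CND, so the $\#D$ points it describes are affinely independent and span dimension $\#D-1$.

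\emph{Lower bound.} Let $K$ be a clique of $G$ with $\#K=q(G)$. Every completion agrees with the sample statistic on $K\times K$, since all those entries are specified. So if the completion is strictly CND, then $\overline{\Gamma}_{K,K}$ must itself be strictly CND. That means the $q(G)$ points indexed by $K$ are affinely independent, and so they span an affine space of dimension $q(G)-1$. On the other hand, $\overline{\Gamma}$ has dimensionality $m$, so all $d$ of its points lie in an $m$-dimensional affine space. Hence $m\ge q(G)-1$ whenever a strictly CND completion exists. The argument applies to every $\overline{\Gamma}$, not just generic ones, so it bounds even the weak threshold, and in particular $\emlt(G)\ge q(G)-1$. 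This mirrors the Gaussian argument behind Theorem~\ref{thm:ineqGaussian}.

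\emph{Upper bound.} Choose a chordal cover $G'$ of $G$ with $q(G')=\tw(G)+1$, and let $\overline{\Gamma}$ be a generic EDM of dimensionality $m=\tw(G)$. Generic $d$ points in $\RR^m$ are in general position, so any $m+1$ of them are affinely independent. Each maximal clique of $G'$ has at most $m+1$ vertices, so every clique submatrix $\overline{\Gamma}_{C,C}$ is strictly CND. Thus $\overline{\Gamma}$, restricted to the edges of $G'$, is a partially-specified CND variogram with respect to $G'$. By Theorem~\ref{thm:chordalmld1}, or \citet[Proposition~4.4]{HES2022}, it has a strictly CND completion $\Gamma'$ whose Laplacian vanishes off $E'$. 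It remains to pass from $G'$ back to $G$.

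\emph{Passing from $G'$ to $G$.} This is the step I expect to be the main obstacle. The completion problem for $G$ is the critical-point equation of the concave problem \eqref{eq:LCGGM-MLE}. By standard exponential-family and convexity arguments, as in \citet{HES2022}, a strictly CND solution for $G$ exists if and only if the objective is bounded above on the feasible set $\{\Theta\in\mathbb{U}^d:\Theta_{ij}=0\ \forall (i,j)\notin E\}$. The key observation is that the objective depends on $\overline{\Gamma}$ only through the entries $\overline{\Gamma}_{ij}$ with $(i,j)\in E$, because $\Theta_{ij}=0$ off $E$ and $\Theta\ones=\zeros$ lets us rewrite $\tr(\Theta\overline{\Gamma})=-\sum_{i<j}2\Theta_{ij}\overline{\Gamma}_{ij}$ up to sign. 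We may therefore replace $\overline{\Gamma}$ by $\Gamma'$, which agrees with $\overline{\Gamma}$ on $E\subset E'$, without changing the objective. With $\Gamma'$ strictly CND, the objective $\log\Det(\Theta)+\frac12\tr(\Theta\Gamma')$ is bounded above on all of $\mathbb{U}^d$: the linear term equals $-\frac12\tr(\Theta\,\sigma(\Gamma')^{\dagger})$-type coercive behaviour, because $\sigma(\Gamma')$ is positive definite on $\ones^\perp$. Restricting to the smaller feasible set for $G$ keeps it bounded, so the MLE for $G$ exists and gives a strictly CND completion. Hence $\emlt(G)\le\tw(G)$. The point requiring care is the coercivity claim together with the bounded-iff-exists equivalence; I would verify these by working in the coordinates $\Theta^{(k)}$, where the problem becomes a Gaussian likelihood with positive definite $\varphi_k(\Gamma')$.
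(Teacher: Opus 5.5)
Your proof is correct, and its skeleton matches the paper's. The lower bound is the same clique argument: the paper phrases it through Propositions~\ref{prop:LowerRankSigma} and~\ref{prop:CNDsubmatrix}, you through affine independence. For the upper bound, both of you produce a strictly CND matrix agreeing with $\overline{\Gamma}$ on the edges of a minimal chordal cover $G'$, and then deduce that the MLE for $G$ exists.

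The difference is in how those two upper-bound steps are carried out.
\begin{itemize}
\item The paper builds the matrix on $E'$ by induction on $d$ along a clique-separator decomposition $A\cup C$, $B\cup C$ of $G'$. It realizes the two sub-solutions as simplices and glues them along the face indexed by $C$. It then cites \citet[Proposition~4.6]{HES2022} to pass from ``a strictly CND matrix agreeing on $E$ exists'' to ``the MLE for $G$ exists''.
\item You get the matrix on $E'$ in one stroke. General position makes every clique block of $G'$ (at most $\tw(G)+1$ vertices) strictly CND, and the chordal completion theorem does the rest. You then re-derive the Proposition~4.6 step from the variational problem \eqref{eq:LCGGM-MLE}.
\end{itemize}

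The paper's gluing is a self-contained geometric construction that does not need the chordal completion theorem. Your version is shorter and avoids two delicate spots in that induction:
\begin{itemize}
\item The shortcut for $\tw(G)=d-2$ asserts that $\overline{\Gamma}$ itself is strictly CND, which is impossible at dimensionality $d-2$.
\item The gluing needs $\hat\Gamma^A_{C,C}=\overline\Gamma_{C,C}$. This is guaranteed only if $C$ is a clique of the graph being completed, i.e.\ if one works with $G'[A\cup C]$ rather than $G[A\cup C]$.
\end{itemize}

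Your last step needs two small repairs.
\begin{itemize}
\item The linear term is $\tfrac12\tr(\Theta\Gamma')=-\tr(\Theta\,\sigma(\Gamma'))$, with no pseudo-inverse.
\item Boundedness above alone does not give attainment. What works is the bound $\tr(\Theta\,\sigma(\Gamma'))\ge\lambda\tr(\Theta)$, where $\lambda>0$ is the smallest eigenvalue of $\sigma(\Gamma')$ on $\ones^\perp$. This makes the superlevel sets compact on the feasible set for $G$, which is nonempty because $G$ is connected. So the maximizer exists, and its stationarity equations are exactly the completion equations.
\end{itemize}
Alternatively, you can simply cite \citet[Proposition~4.6]{HES2022}, as the paper does.
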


\begin{proof}
    Since $G$ is not complete, we have that $q(G) < d$.
    Let $\overline{\Sigma}$ be a generic sample covariance matrix of rank $r$ and let $\overline{\Gamma} = \gamma(\overline{\Sigma})$ so that $\overline{\Gamma}$ is a generic sample variogram of dimensionality $r$. 
    Suppose that $r < q(G) -1$. Let $C$ be a clique in $G$ of size $q(G)$. Consider the $C \times C$ submatrix of $\Sigma$, which has rank at most $q(G) - 2$. So $\overline{\Gamma}_{C,C}$ is not strictly conditionally negative definite by Proposition \ref{prop:LowerRankSigma}. So by Proposition \ref{prop:CNDsubmatrix}, there does not exist a strictly CND matrix $\hat{\Gamma}$ with $\hat{\Gamma}_{C,C} = \overline{\Gamma}_{C,C}$. In other words, this matrix completion problem has no strictly CND solution and the maximum likelihood estimate for $\overline{\Gamma}_{C,C}$ does not exist. Hence $\emlt(G) \geq q(G) -1.$

    Now we turn our attention to the upper bound and assert that $\emlt(G) \leq \tw(G)$.  Let $G'=(V,E')$ be a chordal cover of $G$ of minimal clique number so that $\tw(G) = q(G') -1$. Since $G$ is not complete, $G'$ is also not complete since for any non-edge $f$ of $G$, the graph $([d], \binom{[d]}{2} - \{f\})$ is a chordal cover of $G$. So $\tw(G) < d-1$. Let $\overline{\Gamma}$ be a generic (not necessarily strictly) CND matrix of dimensionality $\tw(G)$. If $\tw(G)+2 \geq d$, then $\overline{\Gamma}$ is strictly CND and we know that its MLE exists. So we assume that $\tw(G) < d-2$. We have that $\overline{\Gamma} = \gamma(\overline{\Sigma})$ for some positive semidefinite $\overline{\Sigma}$ of rank $\tw(G)$.

    We induct on $d$. Since $G'$ is chordal, it is decomposable into $A \cup C$ and $B\cup C$ such that $C$ is a clique that separates $A$ and $B$. Since  $\overline{\Sigma}$ is generic of rank $\tw(G)$, we have that
    \[
    \rank(\overline\Sigma_{A\cup C, A\cup C}) = \min\{\#(A\cup C), \tw(G) \}.
    \]
    We claim that this is at least the treewidth of the induced subgraph $G[A\cup C]$. Indeed, $\#(A\cup C)$ is the number of vertices in this induced subgraph, which is greater than its treewidth. For the second bound, note that for any $S \subset [d]$, $G'[S]$ is a chordal cover of $G[S]$. Let $H$ be a chordal cover of $G[S]$ with minimal clique number. Then $q(H) \leq q(G'[S])$. So 
    \[\tw(G[S]) \leq q(G'[S]) -1 \leq q(G') -1 = \tw(G).
    \]
    Therefore $\rank(\overline\Sigma_{A\cup C, A\cup C}) \geq \tw (G[A \cup C])$ and similarly for $B \cup C$.

    Consider the restrictions of the sample variogram matrices $\overline\Gamma_{A\cup C,A\cup C}$ and $\overline\Gamma_{B\cup C,B\cup C}$.
    By induction, there exist maximum likelihood  estimates $\hat{\Gamma}^A$ and $\hat\Gamma^B$ for these, respectively, that are strictly CND.
    These are both Euclidean distance matrices for  the vertices of simplices $\Delta_{A\cup C} \subset \RR^{A \cup C}$ and $\Delta_{B \cup C} \subset \RR^{B \cup C}$ of dimensions $\#(A \cup C)$ and $\#(B\cup C)$, respectively. Since $C$ is a clique, $\hat\Gamma^A_{C,C} = \overline\Gamma_{C,C} = \hat\Gamma^B_{C,C}.$ So the faces of $\Delta_{A\cup C}$ and $\Delta_{B \cup C}$ corresponding to elements of $C$, denoted $\Delta_{A\cup C}^C$ and $\Delta_{B\cup C}^C$ are isomorphic. We may apply rigid transformations so that 
    \[
    \Delta_{A\cup C}^C \subset \{\bfx \in \RR^{A\cup C} \mid x_i =0 \text{ for all  } i \in A \}, \text{ and}
    \]
    \[
    \Delta_{B\cup C}^C \subset \{\bfy \in \RR^{B\cup C} \mid y_i =0 \text{ for all  } i \in B \},
    \]
    and so that the coordinates of $\Delta_{A\cup C}^C$ and $\Delta_{B\cup C}^C$ indexed by $C$ are equal. 

    By extending the vertices of $\Delta_{A\cup C}$ and $\Delta_{B\cup C}$ by zeros in the appropriate coordinates, we may embed them in $\RR^{A\cup C \cup B}$ so that they agree on the vertices corresponding to elements of $C$. Taking their convex hull results in a simplex $\Delta$ in $\RR^{A\cup C\cup B}$ of dimension $\#(A\cup C\cup B)-1$. The Euclidean distance matrix of $\Delta$, $\Gamma^\Delta$, is therefore strictly CND and has $\Gamma^\Delta_{ij} = \overline\Gamma_{ij}$ for each $(i,j) \in E'$. In particular, it agrees with $\overline\Gamma$ for all edges in $G$. So $\Gamma^\Delta$ is a strictly CND completion of $\overline\Gamma$. By \citet[Proposition~4.6]{HES2022}, since such a completion exists, the MLE exists as well.
\end{proof}

In \citet{uhler2011}, the \emph{elimination criterion} gives an algebraic condition under which the MLE exists with probability 1. We derive an elimination criterion in the extreme setting which involves eliminating variables from a different determinantal ideal. We define this ideal 
\[J_{G,r} \subset \CC[\gamma_{ij} \mid 1 \leq i < j \leq d]\]
as follows. First we define
\[
I_{r} \subset \CC[\sigma_{ij} \mid 1 \leq i \leq j \leq d]
\]
to be the ideal of all $(r+1) \times (r+1)$ minors of a generic symmetric matrix $\Sigma$. Its variety consists of all symmetric matrices of rank at most $r$. Then let $J_{r}$ be obtained from $I_{r}$ via the linear change of coordinates which takes $\Gamma$ to $\sigma(\Gamma)$, see Equation~\ref{eq:sigmaofgamma}. Finally we obtain $J_{G,r}$ from $J_r$ be eliminating all variables $\gamma_{ij}$ such that $(i,j)$ is not an edge of the graph.

\begin{thm}\label{thm:zeroidealcriterion}
Let $r < d$.
    If $J_{G,r}$ is the zero ideal, then the MLE exists with probability 1 for a generic sample variogram matrix of dimensionality $r$.
\end{thm}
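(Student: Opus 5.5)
Our plan follows the Gaussian elimination criterion of \citet{uhler2011}. Consider the coordinate projection $\pi_G$ that sends a symmetric zero-diagonal matrix $\Gamma$ to its edge entries $(\Gamma_{ij})_{(i,j)\in E}$. By the closure theorem, $\mathcal{V}(J_{G,r})$ is the Zariski closure of $\pi_G(\mathcal{V}(J_r))$. Here $\mathcal{V}(J_r)$ is the set of zero-diagonal symmetric $\Gamma$ with $\rank\,\sigma(\Gamma)\le r$. The hypothesis $J_{G,r}=\langle 0\rangle$ therefore says that the edge entries of a matrix of dimensionality at most $r$ fill a Zariski dense subset of $\CC^{E}$. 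We will upgrade this to real, CND matrices, and then feed the resulting edge data into the existence criterion for completions, \citet[Proposition~4.6]{HES2022}.

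The steps are as follows.

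\emph{Step 1: parametrize the real sample variograms.} Set $\mathcal{P}_r=\{\gamma(\Sigma): \Sigma\succeq 0,\ \Sigma\ones=\zeros,\ \rank\Sigma=r\}$, using Equation \eqref{eq:gammaofsigma}. This is the set of sample variograms of dimensionality $r$. It is the image of the irreducible real-parametrized family $\Sigma=B^\top B$, with $B\in\RR^{r\times d}$ and $B\ones=\zeros$. The real points with $\Sigma\succeq0$ are Zariski dense in the complex variety $\{\rank\le r\}\cap\{\Sigma\ones=\zeros\}$, which the linear isomorphism $\gamma$ identifies with $\mathcal{V}(J_r)$.

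\emph{Step 2: the projection has full-dimensional image.} By Step 1, $\pi_G(\mathcal{P}_r)$ is Zariski dense in $\mathcal{V}(J_{G,r})=\CC^E$. Since $\pi_G$ restricted to the parametrization is a polynomial map of irreducible real varieties, its Jacobian has rank $\#E$ on a nonempty Zariski open subset of the parameter space. So for generic $B$ the image of a small neighborhood contains a Euclidean open ball in $\RR^E$. Moreover, this happens off a proper algebraic subset of parameters, so it holds for a generic sample.

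\emph{Step 3: produce a strictly CND completion.} Fix a generic $\overline\Gamma\in\mathcal{P}_r$ and keep its edge entries $\mathring\Gamma=\pi_G(\overline\Gamma)$. We need a strictly CND $\Gamma^\ast$ with $\pi_G(\Gamma^\ast)=\mathring\Gamma$. The plan is to note that $\mathcal{P}_{d-1}\cap\mathcal{C}^d$ is open, and that $\mathcal{P}_r$ lies in its closure, since matrices of dimensionality $r$ are limits of matrices of dimensionality $d-1$. Because $\pi_G$ is linear and has full rank on $\mathcal{P}_r$ near $\overline\Gamma$, the map from the full-dimensional set $\mathcal{C}^d$ near $\overline\Gamma$ onto $\RR^E$ is open. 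Hence we can choose $\Gamma^\ast\in\mathcal{C}^d$ with $\pi_G(\Gamma^\ast)=\mathring\Gamma$. Concretely, we perturb $\overline\Gamma$ by $\epsilon(\Gamma_0-\overline\Gamma)$ for some $\Gamma_0\in\mathcal{C}^d$. We then correct the edge entries back to $\mathring\Gamma$ using the local surjectivity of $\pi_G$ along $\mathcal{P}_r$ from Step 2. The correction is $O(\epsilon)$, while the perturbation added a definite $-\epsilon$ amount of negativity on $\ones^\perp$.

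\emph{Step 4: conclude.} Once a strictly CND completion of the partial matrix $\mathring\Gamma$ exists, \citet[Proposition~4.6]{HES2022} gives existence of the surrogate MLE. The exceptional set we discarded is contained in a proper algebraic subset of the parameter space, so the conclusion holds with probability 1.

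The main obstacle is Step 3, because it is not true that a negligible correction automatically stays strictly CND. The correction lives in the tangent space of $\mathcal{P}_r$, and that tangent direction can destroy the negativity gained on directions in $\ker\sigma(\overline\Gamma)\cap\ones^\perp$. To handle this, we first try to choose the correction tangent to $\mathcal{P}_r$ of the form $\gamma(B^\top\dot B+\dot B^\top B)$. Such a correction vanishes as a quadratic form on $\ker B\cap\ones^\perp$ to first order, so the strict negativity there is preserved for small $\epsilon$. On the complement, $\sigma(\overline\Gamma)\succ0$ already, and that is an open condition. If this fails, the fallback is the argument of \citet{uhler2011}, which uses the fact that the non-edge fibres are positive-dimensional.
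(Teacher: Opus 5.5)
Your proof is correct, but its decisive step uses a different mechanism from the paper's. The paper argues measure-theoretically. Since $J_{G,r}=\langle 0\rangle$, the variety $\mathcal{V}(J_{G,r})$ is the whole ambient space, and the boundary of the projected cone $\mathcal{C}_G=\pi_G(\bar{\mathcal{C}}^d)$ is a Lebesgue null set in $\RR^E$. So the edge data of a generic sample lies almost surely in the interior of $\mathcal{C}_G$, hence admits a strictly CND completion, and \citet[Proposition~4.6]{HES2022} finishes. You instead turn dominance into surjectivity of the differential of $B\mapsto\pi_G(\gamma(B^\top B))$ at generic real $B$. This makes $\mathring\Gamma$ an interior point of $\pi_G(\bar{\mathcal{C}}^d)$ deterministically, off an explicit proper algebraic set, and you then build the strictly CND completion by hand. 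The paper's version is shorter and mirrors Uhler's elimination criterion directly. Yours supplies two facts the paper leaves implicit: why the pushforward of the sampling distribution does not charge the null set $\partial\mathcal{C}_G$ (this is exactly your generic submersion), and why interior points of the projected cone lift to strictly CND matrices.

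In Step 3, the sentence about $\pi_G$ being open on $\mathcal{C}^d$ does not by itself show $\mathring\Gamma\in\pi_G(\mathcal{C}^d)$. However, your tangent-vector fix does work, and no fallback is needed. For $w\in\ker B$ one has $w^\top(B^\top\dot B+\dot B^\top B)w=2(Bw)^\top\dot B w=0$ exactly, not just to first order. Write $v=u+w$ with $u$ in the row space of $B$. The only remaining danger is the cross term of size $O(\epsilon)\|u\|\|w\|$. It is dominated by $c\|u\|^2+\epsilon c_0\|w\|^2$ for small $\epsilon$, since $\epsilon^2\ll\epsilon$; here $c>0$ comes from $B^\top B$ on its row space and $c_0>0$ from $\sigma(\Gamma_0)$ on $\ones^\perp$. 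Your phrase ``on the complement'' does not control these mixed terms, so spell this out.

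Cleaner still is to avoid the additive correction entirely. By local surjectivity, choose $X\in\mathcal{P}_r$ near $\overline\Gamma$ with $\pi_G(X)=(\mathring\Gamma-\epsilon\,\pi_G(\Gamma_0))/(1-\epsilon)$. Then $\Gamma^\ast=(1-\epsilon)X+\epsilon\Gamma_0$ is strictly CND, being a convex combination of a CND and a strictly CND matrix, and its edge entries are exactly $\mathring\Gamma$.
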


\begin{proof}
    Let $\mathcal{V}(J_{G,r-2})$ denote the variety of the ideal $J_{G,r}$. It is the Zariski closure of the projection of $\mathcal{V}(J_{r})$ onto the cone of sufficient statistics $\mathcal{C}_G$, that is, the cone of CND matrices projected on to the entries corresponding to edges of $G$. Let $\dim(\mathcal{V}(J_{G,r})) = k$ and let $\mu$ be a $k$-dimensional Lebesgue measure. We claim that if $\mu(\mathcal{V}(J_{G,r}) \cap \partial \mathcal{C}_G) = 0$, then the MLE exists with probability 1 for a sample variogram matrix of dimensionality $r$. Indeed, let $\overline\Gamma$ be a generic dimensionality $r$ sample variogram matrix. Then  $\sigma(\overline\Gamma)$ has rank $r$ and belongs to $\mathcal{V}(J_{r})$. Since $\mu(\mathcal{V}(J_{G,r}) \cap \partial \mathcal{C}_G) = 0$, its projection onto $\mathcal{C}_G$ is almost surely in the interior of $\mathcal{C}_G$, so it has a strictly CND completion. By \citet[Proposition~4.6]{HES2022}, its MLE exists.

    Now suppose that $J_{G,r}$ is the zero ideal so that its variety is the entire ambient space. Then $V(J_{G,r}) \cap \partial \mathcal{C}_G = \partial \mathcal{C}_G$, which is not full-dimensional. In particular, $\mu(\mathcal{V}(J_{G,r}) \cap \partial \mathcal{C}_G) = 0$ in this case so the MLE for a generic dimensionality $r$ sample variogram matrix exists.
\end{proof}

 Proposition \ref{prop:mltsquare} and Corollary \ref{cor:square} establish the extremal weak maximum likelihood threshold and the extremal maximum likelihood threshold of the 4-cycle (see Example~\ref{ex:4cycle}); in particular, we show that these are different.

\begin{prop}\label{prop:mltsquare}
    The extremal weak maximum likelihood threshold of the 4-cycle is $1$. 
\end{prop}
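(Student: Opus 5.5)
The plan is to prove two things: dimensionality $0$ never admits a strictly CND completion, and some variogram of dimensionality $1$ does. Throughout, label the $4$-cycle $1-2-3-4-1$, so its edges are $(1,2),(2,3),(3,4),(1,4)$.

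\emph{Lower bound.} A variogram of dimensionality $0$ is the Euclidean distance matrix of four coincident points, so it is the zero matrix. Any completion $\hat\Gamma$ would then satisfy $\hat\Gamma_{12}=0$. Take $v=\bfe_1-\bfe_2\perp\ones$. Then $v^\top\hat\Gamma v=-2\hat\Gamma_{12}=0$, so $\hat\Gamma$ is not strictly CND. Equivalently, the $\{1,2\}$ principal submatrix fails to be strictly CND, and Proposition~\ref{prop:CNDsubmatrix} excludes a strictly CND completion. Hence the weak threshold is at least $1$.

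\emph{Upper bound.} I will exhibit an explicit point configuration on a line whose edge distances are realizable by a nondegenerate simplex. Place points $1,2,3,4$ at the real coordinates $0,1,2,1$. The resulting variogram $\Gamma_0$ has dimensionality $1$: the points are not all equal, and $\sigma(\Gamma_0)$ is the Gram matrix of centered collinear points, hence of rank $1$.

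On the edges of the cycle, every squared distance equals $1$:
\[
\Gamma_{12}=\Gamma_{23}=\Gamma_{34}=\Gamma_{14}=1.
\]
(The non-edge entries are $\Gamma_{13}=4$ and $\Gamma_{24}=0$, but they are forgotten in the completion problem.) Now complete using the regular tetrahedron with unit edges, i.e.\ set every off-diagonal entry to $1$. This is the Euclidean distance matrix of a nondegenerate $3$-simplex. Its Gram matrix $\sigma(\cdot)$ has rank $3=d-1$, so by the Schoenberg criterion, with kernel exactly spanned by $\ones$, the completion is strictly CND and agrees with $\Gamma_0$ on all edges of $C_4$.

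By \citet[Proposition~4.6]{HES2022}, the existence of this strictly CND completion implies that the MLE (the solution of Definition~\ref{def:matrixcompl}) exists for $\Gamma_0$. Hence the weak threshold is at most $1$, and combining with the lower bound it equals $1$.

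\emph{Main obstacle.} The only delicate point is checking that the chosen collinear configuration really has dimensionality $1$ while its edge data still fit a full-dimensional simplex. The trick is that in the cycle, vertices $2$ and $4$ may coincide on the line, since $(2,4)$ is a non-edge, so the edge data can be all equal. This is exactly the freedom a generic rank-$1$ configuration lacks, which is why the weak threshold and $\emlt$ differ (Corollary~\ref{cor:square}).
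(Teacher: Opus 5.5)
Your proof is correct, but it takes a different and much more elementary route than the paper.

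\textbf{What the paper does.} It normalizes a one-point sample to $(1,x_2,x_3,-(1+x_2+x_3))^\top$ and imposes the two determinantal constraints coming from the non-edges. It then saturates by $\det\CM(\Gamma)$ to get a lexicographic Gr\"obner basis $\{g_1,g_2\}$. Finally it checks, at the specific sample $(1,0,2,-3)^\top$, that one of the candidate completions satisfies the positivity conditions $h_1,h_2,h_3$. The lower bound is only asserted (``must be positive'').

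\textbf{What you do.} You pick a non-generic collinear configuration in which the two endpoints of a non-edge coincide. Then every edge entry equals $1$. The all-ones off-diagonal matrix satisfies $v^\top(\ones\ones^\top-I)v=-\|v\|^2$ on $\ones^\perp$, so it is an obvious strictly CND completion. You also actually prove the lower bound.

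\textbf{Comparison.} Your route needs no computer algebra and generalizes immediately. Placing the two colour classes of any bipartite graph with at least one edge at $0$ and $1$ shows that its weak threshold is $1$; this covers all even cycles and all $K_{2,n}$. By the definition of the weak threshold, you do not even need \citet[Proposition~4.6]{HES2022}: exhibiting a strictly CND completion already suffices. The paper's computation, in contrast, tracks the actual solutions of the completion problem in Definition~\ref{def:matrixcompl}. The same system $g_1,g_2,h_1,h_2,h_3$ is then reused in Corollary~\ref{cor:square} to find an open set of dimensionality-one samples with no completion.

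\textbf{A correction to your closing remark.} $\mathcal{C}^4$ is open, and coordinate projection is an open map. Hence the set of configurations whose edge data admit a strictly CND completion is open, and generic small perturbations of $(0,1,2,1)$ remain completable. So the weak threshold and $\emlt(C_4)$ do not differ because generic configurations lack your coincidence. They differ because there is also a nonempty open set of dimensionality-one configurations with no completion.
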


\begin{proof}
    We show that the probability that the matrix completion problem for the square graph has a solution for a sample with a single element is strictly between $0$ and $1$. 
    For any given graph, 
    scaling all elements in a sample by any nonzero real number
    does not change the property of the empirical variogram matrix to have a conditionally negative definite completion.
    Indeed,
    scaling by a scalar $c$ multiplies the original empirical variogram by the positive number $c^2$.
    Then the original completion multiplied by $c^2$ is a conditionally negative definite completion of the scaled variogram.
    Without loss of generality we therefore assume that we have a sample consisting of a single element $\{(1,x_2,x_3,-(1+x_2+x_3))^\top\}$.

    The variogram matrix arising from this sample is
    \[
    \Gamma = \begin{pmatrix}
        0& \gamma_{12} & (1-x_3)^2 & (2+x_2+x_3)^2 \\
        \gamma_{12} &  0 & (x_2-x_3)^2 & (1+x_3)^2 \\
        (1-x_3)^2 & (x_2-x_3)^2 & 0 & \gamma_{34}\\
        (2+x_2+x_3)^2 & (1+x_3)^2& \gamma_{34} & 0 \end{pmatrix}.
    \]
    From the missing diagonals of the square we obtain two equations 
    arising from the determinants of the corresponding submatrices. 
    Saturating with respect to the determinant of $\CM(\Gamma)$ gives rise to an ideal
    whose Gr\"obner basis for a lexicographic order is then given by the two polynomials
    \begin{small}
    \begin{align*}
        g_1 &= \gamma_{34}^{3}+\frac{-21\,x_2^{2}-4\,x_2\,x_3-4\,x_3^{2}-26\,x_2-4\,x_3-21}{3}\gamma_{34}^{2}+\frac{9\,x_2^{4}+60\,x_2^{3}+118\,\mathit{x_2
      }^{2}+60\,x_2+9}{3}\gamma_{34} \\
      &+\frac{9\,x_2^{6}+36\,x_2^{5}x_3+36\,x_2^{4}x_3^{2}+78\,x_2^{5}+276\,x_2^{4}x_3+240\,x_2^{3
      }x_3^{2}+247\,x_2^{4}+712\,x_2^{3}x_3}{3} \\
      &+\frac{472\,x_2^{2}x_3^{2}+356\,x_2^{3}+712\,x_2^{2}x_3+240\,x_2\,x_3^{2}+247
      \,x_2^{2}+276\,x_2\,x_3+36\,x_3^{2}+78\,x_2+36\,x_3+9}{3} \\
    g_2 &= \gamma_{12}+\frac{-3}{6\,x_2^{2}+20\,x_2+6}\gamma_{34}^{2}+\frac{12\,x_2^{2}+2\,x_2\,x_3+2\,x_3^{2}+18\,x_2+2\,x_3+12}{3\,x_2^{2}+10
      \,x_2+3}\gamma_{34} \\
      &+\frac{-9\,x_2^{2}-4\,x_2\,x_3-4\,x_3^{2}-18\,x_2-4\,x_3-9}{2}.
    \end{align*}
    \end{small}
    The three solutions to this system of equations are the candidates for the completion. 
    A completion is conditionally negative semidefinite exactly when
    the upper left entry, the determinant of the upper $2 \times 2$ block and the whole determinant of the matrix $\Sigma^{(4)}$ 
    are all positive.
    The polynomials in $\gamma_{12}, \gamma_{34}$ 
    corresponding to these conditions are
    \begin{small}
    \begin{align*}
        h_1 &= (x_2+x_3+2)^2, \\
        h_2 &= \frac{-1}{4}(\gamma_{12}-9x_2^2-12x_2x_3-4x_3^2-18x_2-12x_3-9)(\gamma_{12}-x_2^2+2x_2-1),\\
        h_3 &= \frac{-1}{4}(\gamma_{12}^2\gamma_{34}+\gamma_{12}\gamma_{34}^2+(-6x_2^2-4x_2x_3-4x_3^2-8x_2-4x_3-6)\gamma_{12}\gamma_{34}+(3x_2^4+12x_2^3x_3+12x_2^2x_3^2 \\ &+16x_2^3+52x_2^2x_3+40x_2x_3^2+26x_2^2+52x_2x_3+12x_3^2+16x_2+12x_3+3)\gamma_{12}+(3x_2^4-4x_2^3x_3\\    &-4x_2^2x_3^2+4x_2^2x_3+8x_2x_3^2-6x_2^2+4x_2x_3-4x_3^2-4x_3+3)\gamma_{34}-2x_2^6-4x_2^5x_3+12x_2^4x_3^2+32x_2^3x_3^3 \\ &+16x_2^2x_3^4-8x_2^5-20x_2^4x_3-16x_2^3x_3^2-32x_2^2x_3^3-32x_2x_3^4+2x_2^4+24x_2^3x_3+8x_2^2x_3^2-32x_2x_3^3+16x_3^4 \\     &+16x_2^3+24x_2^2x_3-16x_2x_3^2+32x_3^3+2x_2^2-20x_2x_3+12x_3^2-8x_2-4x_3-2).
    \end{align*}
    \end{small}
    
    For the sample consisting of the single element $\{(1,0,2,-3)^\top\}$ we find a completion that satisfies the equations $g_1 = g_2 =0$ and the three inequalities are positive. Since the extremal weak maximum likelihood threshold must be positive, we conclude that it is equal to $1$.
\end{proof}

Now we can apply the elimination criterion in Theorem \ref{thm:zeroidealcriterion} to compute the extremal maximum likelihood threshold of the 4-cycle.

\begin{cor}\label{cor:square}
    The extremal maximum likelihood threshold of the 4-cycle is 2, i.e.~$\emlt(C_4)=2$.
\end{cor}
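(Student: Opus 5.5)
The claim has two halves. We must show $\emlt(C_4)\le 2$, meaning a generic variogram of dimensionality $2$ admits a strictly CND completion. We must also show $\emlt(C_4)>1$, meaning a generic variogram of dimensionality $1$ does not always admit one.

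For the upper bound, Theorem~\ref{thm:ineqHuesler} already gives $\emlt(C_4)\le \tw(C_4)=2$. Indeed, adding one diagonal chord to the 4-cycle gives a chordal cover with clique number $3$, and no chordal cover can do better because $C_4$ itself is not chordal. As an independent check, we could use the elimination criterion of Theorem~\ref{thm:zeroidealcriterion} with $r=2<4$:
\begin{itemize}
\item $J_{2}$ is the ideal of $3\times 3$ minors of $\sigma(\Gamma)$.
\item Its variety has dimension $\dim\{\text{rank}\le 2 \text{ symmetric } 4\times 4 \text{ with } \ones \text{ in kernel}\}=\dim\{\text{rank}\le 2 \text{ symmetric } 3\times 3\}=5$.
\item We need its projection onto the four edge coordinates $\gamma_{12},\gamma_{23},\gamma_{34},\gamma_{14}$ to be dominant.
\item Dominance follows from a rank computation of the Jacobian of the parametrization at one explicit random point configuration in $\RR^2$, i.e.\ four points in the plane with four prescribed side lengths. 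The side lengths of a planar quadrilateral are locally free, so the image is full-dimensional and $J_{C_4,2}=0$.
\end{itemize}

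For the lower bound, it suffices to show that a generic dimensionality-$1$ sample variogram fails to admit a strictly CND completion with positive probability. Here I would reuse the setup of Proposition~\ref{prop:mltsquare}. A single sample $(1,x_2,x_3,-(1+x_2+x_3))$ gives the edge entries, and after scaling this is the general dimensionality-$1$ variogram. The completions are governed by the Gr\"obner basis $\{g_1,g_2\}$: $g_1$ is a cubic in $\gamma_{34}$ and $g_2$ expresses $\gamma_{12}$ in terms of $\gamma_{34}$. A strictly CND completion exists exactly when some real root of $g_1$, together with the corresponding $\gamma_{12}$ from $g_2$, makes $h_1,h_2,h_3$ all positive.

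The plan is to exhibit one explicit point $(x_2,x_3)$ where none of the (at most three) real roots satisfies all of $h_1,h_2,h_3>0$, with every root simple and every inequality strict at it. By continuity of the roots and the strictness of the inequalities, failure then persists on an open neighbourhood of $(x_2,x_3)$, which has positive Lebesgue measure. So the MLE does not exist generically with dimensionality $1$, giving $\emlt(C_4)\ge 2$. Since Proposition~\ref{prop:mltsquare} shows existence on a nonempty open set, this also confirms that the weak threshold and the threshold differ.

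The main obstacle is finding and certifying such a failing point. Roots can coalesce or leave the real axis, so the failure region must be located away from the discriminant of $g_1$ and away from the vanishing loci of the denominators and of $h_2,h_3$. I would first try a sample with a strongly unbalanced pattern, for example $x_2$ near $0$ and $x_3$ large, where the edge lengths violate a polygon-type inequality. I would then evaluate the signs numerically, and if needed certify them with exact rational root isolation of $g_1$.
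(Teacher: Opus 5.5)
Your upper bound is correct but takes a different route from the paper. The paper computes $J_{C_4,2}=0$ in \texttt{Macaulay2} and applies Theorem~\ref{thm:zeroidealcriterion}. Your dominance argument proves that same vanishing by hand: four generic points in the plane have locally independent side lengths, so $\mathcal{V}(J_2)$ projects dominantly onto the four edge coordinates.

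Rely on that argument rather than on Theorem~\ref{thm:ineqHuesler}. $C_4$ falls in the branch $\tw(G)+2\ge d$ of that theorem's proof. That branch claims a generic dimensionality-$\tw(G)$ variogram is strictly CND, which contradicts Proposition~\ref{prop:LowerRankSigma}. The bound itself is still true for $C_4$, for instance by folding a generic planar quadrilateral along a diagonal.

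The gap is in the lower bound. Your strategy matches the paper's: find one sample at which every solution of $g_1=g_2=0$ strictly violates some $h_i>0$, with simple roots, then conclude by continuity. But the witness is the entire content of this step, and you do not produce it. Worse, the region you propose to search first is one where completions \emph{exist}.

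In Proposition~\ref{prop:mltsquare} the non-edges are $\{1,2\}$ and $\{3,4\}$. So the cycle is $1$--$3$--$2$--$4$--$1$, and the collinear points are visited in the order $1,\,x_3,\,x_2,\,-(1+x_2+x_3)$. For $x_2\approx 0$ and $x_3>1$ this goes up, down, down, up. The four side lengths then satisfy the polygon inequalities strictly. Two non-coplanar triangles hinged on a suitable length of the diagonal $\{1,2\}$ give a strictly CND completion. The paper's positive example $(1,0,2,-3)^\top$ is exactly of this type.

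Failure occurs when the cycle is traversed monotonically, e.g.\ $1>x_3>x_2>-(1+x_2+x_3)$. Then $\sqrt{\Gamma_{14}}=\sqrt{\Gamma_{13}}+\sqrt{\Gamma_{23}}+\sqrt{\Gamma_{24}}$. Equality in the triangle inequality forces any Euclidean realization of a completion to be collinear, hence not strictly CND. This condition is open in $(x_2,x_3)$, so it proves $\emlt(C_4)\ge 2$ with no Gr\"obner computation at all.

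The paper's witness $(1,0,\tfrac12,-\tfrac32)^\top$ lies in this monotone region. There $g_1=(\gamma_{34}+1)(\gamma_{34}^2-9\gamma_{34}+12)$ has three simple real roots. Via $g_2$ the solutions are $(\gamma_{12},\gamma_{34})=(11,-1)$ and $(0,\tfrac{9\pm\sqrt{33}}{2})$, and all three give $h_3<0$.
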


\begin{proof}
First we show that the extremal maximum likelihood threshold is not 1. We find an open set of dimensionality $1$ partially-specified variogram matrices that do not have a strictly CND completion. 
    Referring back to the equations and inequalities in the proof of Proposition \ref{prop:mltsquare}, we find $\{(1,0,\frac{1}{2},-\frac{3}{2})^\top\}$,
    for which all solutions to $g_1 = g_2 =0$ give $h_3<0$,
    which means that there is a (Euclidean) open region around this sample that has no completion. 
    Thus the extremal maximum likelihood threshold is not equal to $1$.

    Using \texttt{Macaulay2}, we compute that if $G=C_4$, then $J_{G,2}$ is the $0$ ideal. So by Theorem \ref{thm:zeroidealcriterion}, the extremal maximum likelihood threshold of $G$ is at most $2$. Since the extremal maximum likelihood threshold of the 4-cycle is not equal to $1$, we conclude that it is $2$.
\end{proof}

Based on Theorem \ref{thm:ineqHuesler} and Corollary \ref{cor:square}, we conjecture the following relation between the extremal maximum likelihood threshold and the maximum likelihood threshold of a graph.

\begin{conj}
For any undirected graph $G$, it holds that
\[ \emlt(G) = \mathrm{MLT}(G)-1.  \]
\end{conj}

Matroid theory and rigidity theory have proven useful for finding tighter bounds on the maximum likelihood threshold of a Gaussian graphical model. Several papers in the last decade have used rigidity and coloring properties of the graph $G$ to bound, or compute exactly, the Gaussian maximum likelihood threshold of a graph \citep{bernstein2023computing,bernstein2024maximum,gross2018maximum, hojsgaard2024}.  It is a natural direction for future research to approach the extremal maximum likelihood threshold problem from this angle as well.

\section*{Acknowledgements}
This research was supported through the program \emph{Oberwolfach Research Fellows} by the Mathematisches Forschungsinstitut Oberwolfach in 2024.

\bibliographystyle{chicago}
\bibliography{bibliography}

\appendix
\section{Properties of the Variogram Matrix}

Let $\Sigma$ be a positive semidefinite $d\times d$ matrix and let $d_\Sigma$ be the $d$-dimensional vector whose entries are the diagonal of $\Sigma$. We consider the transformation \eqref{eq:gammaofsigma} given by 
$\gamma(\Sigma) = d_\Sigma  \ones^\top  + \ones d_\Sigma - 2\Sigma$.

\begin{prop}\label{prop:RankBoundOnGamma}
    Let $\Sigma \in \S^d_\geq$ be a generic positive semidefinite matrix of rank $r > 0$. Then
    \[
    \rank(\gamma(\Sigma)) = \min(r+2, d). 
    \]
\end{prop}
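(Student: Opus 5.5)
Write $\Sigma = B^\top B$ with $B$ an $r\times d$ matrix, so $d_\Sigma$ has entries $\|b_i\|^2$ for the columns $b_i$ of $B$. The upper bound comes directly from Equation \eqref{eq:gammaofsigma}. The matrix $\gamma(\Sigma) = d_\Sigma\ones^\top + \ones d_\Sigma^\top - 2\Sigma$ is a sum of two rank-one matrices and a rank-$r$ matrix, so its column space lies in $\mathrm{span}(d_\Sigma,\ones) + \mathrm{col}(\Sigma)$. Hence $\rank(\gamma(\Sigma)) \le \min(r+2,d)$.

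For the lower bound, I use genericity together with lower semicontinuity of rank. Let $\mathcal{S}_r$ be the set of PSD matrices of rank $r$, parametrized by $B\in\RR^{r\times d}$. The map $B\mapsto \gamma(B^\top B)$ is polynomial and the parameter space is irreducible. So the condition ``$\rank(\gamma(B^\top B)) \ge \min(r+2,d)$'' is a Zariski-open condition, because it is the nonvanishing of some minor. It holds generically as soon as it holds for a single explicit $B$. The proof therefore reduces to constructing one example.

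\textbf{Construction.} First suppose $r+2\le d$. I will try to show that $\ones$, $d_\Sigma$ and $\mathrm{col}(\Sigma)$ are in direct sum and that the column space of $\gamma(\Sigma)$ is all of $\mathrm{span}(\ones, d_\Sigma)\oplus\mathrm{col}(\Sigma)$. Since $\gamma(\Sigma)$ is symmetric, I take a vector $v$ with $\gamma(\Sigma)v=0$ and need to show that $v$ lies in a subspace of dimension $d-r-2$. Expanding gives
\[
\gamma(\Sigma)v=(\ones^\top v)\,d_\Sigma + (d_\Sigma^\top v)\,\ones - 2\Sigma v.
\]
When $\ones$, $d_\Sigma$ and $\mathrm{col}(\Sigma)=\mathrm{row}(B)^\top$ are linearly independent, which I would arrange with the example, the three terms must vanish separately. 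This forces $\ones^\top v=0$, $d_\Sigma^\top v=0$ and $Bv=0$. Those are $r+2$ independent linear conditions, so the kernel has dimension $d-r-2$ and the rank is $r+2$.

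It remains to choose $B$ so that the $(r+2)\times d$ matrix with rows $B$, $\ones^\top$, $d_\Sigma^\top$ has full rank $r+2$. I would take $b_i=e_i$ for $i\le r$, $b_{r+1}=0$, and $b_{r+2}=2e_1$, with the remaining columns arbitrary, say $0$. On the first $r+2$ columns the stacked matrix is then easily checked to be nonsingular. The key point is that the $d_\Sigma$ row contains the value $4$ in column $r+2$, which breaks the dependence that would otherwise make $d_\Sigma$ a combination of $\ones$ and the rows of $B$.

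\textbf{Case $r+2>d$.} Here $r\ge d-1$. For $r=d$, a generic full-rank $\Sigma$ gives full rank by the same argument, since the kernel conditions include $Bv=0$ and $B$ is invertible. For $r=d-1$, the conditions $Bv=0$ and $\ones^\top v=0$ already force $v=0$ whenever $\ones\notin\mathrm{row}(B)$, and this holds for a generic $B$.

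\textbf{Main obstacle.} The delicate step is the claim that the three terms in the expansion of $\gamma(\Sigma)v$ vanish separately. That step depends on the linear independence of $\ones$, $d_\Sigma$ and $\mathrm{col}(\Sigma)$, so checking this independence for the explicit $B$, and handling the boundary cases $r\ge d-1$, is where the care is needed. It is also where the hypothesis $r>0$ enters.
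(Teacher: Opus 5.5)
Your argument for the main case $r+2\le d$ is correct. The upper bound is right. So is the reduction to a single witness via the irreducible parametrization $B\mapsto B^\top B$, and so is the criterion that $\rank\gamma(\Sigma)=r+2$ once the rows of $B$, $\ones^\top$, $d_\Sigma^\top$ are independent. Your witness also checks out: the Schur complement of $I_r$ in the stacked $(r+2)\times(r+2)$ block is $\left(\begin{smallmatrix}1&-1\\0&2\end{smallmatrix}\right)$, which is nonsingular. This route is tidier than the paper's. The paper pads with zero rows and columns to reduce to $d=r+2$, then builds witnesses from permutation matrices, with separate cases for the parity of $r$; your criterion handles all $d\ge r+2$ at once. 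There is one difference in scope. The paper takes its witnesses inside the variety of rank-$r$ matrices with $\ones$ in the kernel, which is the setting of $\sigma(\Gamma)$ where the proposition is used, whereas your $B$ has $B\ones\neq 0$. This is harmless: $\gamma(B^\top B)_{ij}=\|b_i-b_j\|^2$ does not change when you center the points, and your centered points still span an $r$-dimensional affine space.

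The gap is in the cases $r\in\{d-1,d\}$. Your separation step needs $\dim\bigl(\mathrm{span}(\ones,d_\Sigma)+\mathrm{col}(\Sigma)\bigr)=r+2$, and this is impossible when $r+2>d$.

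For $r=d$ you have $\mathrm{col}(\Sigma)=\RR^d\ni\ones,d_\Sigma$. So $\gamma(\Sigma)v=0$ only gives $2\Sigma v=(\ones^\top v)\,d_\Sigma+(d_\Sigma^\top v)\,\ones$, not $Bv=0$, and ``the same argument'' proves nothing.

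For $r=d-1$, $\mathrm{col}(\Sigma)$ is a hyperplane, so again you cannot split off $Bv=0$ and $\ones^\top v=0$. In both cases, whether the kernel is trivial comes down to a small linear system that you never check.

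The conclusions are true, but they need their own witnesses, which is what the paper supplies:
\begin{itemize}
\item $\gamma(I_d)=2(\ones\ones^\top-I_d)$, which is invertible for $d\ge 2$;
\item $\Sigma=\mathrm{diag}(1,\dots,1,0)$, which gives $\gamma(\Sigma)=\left(\begin{smallmatrix}2(\ones\ones^\top-I_{d-1})&\ones\\ \ones^\top&0\end{smallmatrix}\right)$, also invertible.
\end{itemize}
Alternatively, use the lower semicontinuity you announced but never applied. For $d\ge 3$, your rank-$(d-2)$ witness $\Sigma_0$ has $\det\gamma(\Sigma_0)\neq 0$. Take $P$ positive semidefinite of rank $1$ or $2$ with $\mathrm{col}(P)\cap\mathrm{col}(\Sigma_0)=0$. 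Then for small $\varepsilon$ the nonvanishing persists for $\Sigma_0+\varepsilon P$, which gives witnesses of rank $d-1$ and $d$.
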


\begin{proof}
    First note that $\rank(\gamma(\Sigma)) \leq r+2$. Indeed, the expression for $\gamma(\Sigma)$ writes $\gamma(\Sigma)$ as the sum of $\Sigma$, which has rank $r$, and two rank $1$ matrices. So its rank is at most $r+2$.

    For each $r$, we will exhibit an example of a $\Sigma$ of rank $r$ with the vector of all 1's in its kernel such that $\rank(\gamma(\Sigma)) = r+2.$ This will prove the result since satisfying $\rank(\gamma(\Sigma)) = r+2$ is a generic condition on the entries of $\Sigma$. The variety of rank at most $r$ symmetric matrices with $\ones$ in their kernel is irreducible since it is parameterized by the sum of $r$ rank-one matrices of the form $\mathbf{v}\mathbf{v}^\top $ with $\mathbf{v} \cdot \ones = 0$. So it suffices to find any such example of a $\Sigma$ (not necessarily a positive-definite one). Without loss of generality, we may assume $d \leq r+2$. Indeed, if $d > r+2$, appending rows and columns of zeros to the given examples yields the desired examples of the appropriate dimension.

    We start by considering the case where $d = r+2$. We split the proof into two cases based on the parity of $r$. First let $r$ be odd. Let $\tau$ be the permutation of $[r+1]$, 
    
    \[\tau = \prod_{\substack{i = 1 \\ i \text{ odd}}}^r (i, i+1), \]
    
    and let $P_\tau$ be its permutation matrix. Let $\Sigma_r'$ be the $(r+1)\times(r+1)$ matrix,
    \begin{equation}\label{eq:genericrankexample}
    \Sigma_r' := -(r+1)P_\tau + \ones \ones^\top .
    \end{equation}
    Finally let $\Sigma_r$ be the $(r+2)\times (r+2)$ matrix obtained from $\Sigma'_r$ by adding a row and columns of zeros in the $(r+2)$nd row and column.

    Every row of $\Sigma_r$ has one entry equal to $-r$, $r$ entries equal to $1$ and one entry equal to $0$, so $\ones$ is in the kernel of $\Sigma_r$. So $\Sigma_r$ has rank at most $r$. It suffices to show that $\gamma(\Sigma_r)$ has rank $r+2$. The diagonal of $\Sigma$ is the length $r+2$ vector $(1,\dots,1,0)$. So
    \[
    \gamma(\Sigma_r) = \CM(-4(r+1)P_\tau).
    \]
    This is the Cayley-Menger matrix of an invertible $(r+1)\times (r+1)$ matrix. So it has rank $r+2$. 

    Now let $r$ be even. Let $\nu$ be the permutation of $[r+1]$,
    \[\nu = \prod_{\substack{i = 1 \\ i \text{ odd}}}^{r-1} (i, i+1), \]
    which sends $r+1$ to itself. Let $P_\nu$ be its permutation matrix. Let $\Sigma_r'$ be defined analogously to Equation \eqref{eq:genericrankexample} using $P_\nu$ and let $\Sigma_r$ be obtained from $\Sigma_r'$ be appending a row and column of zeros. Then $\Sigma_r$ again is an $(r+2)\times(r+2)$ matrix with $\ones$ and $\bfe_{r+2}$ in its kernel, so it has rank at most $r$. Moreover, $\gamma(\Sigma'_r)$ is again the Cayley-Menger matrix of $2rP_\nu$, an invertible $(r+1) \times (r+1)$ matrix. So $\gamma(\Sigma_r)$ has rank $r+2$, as needed.

    Now we consider the cases where $d = r+1$ or $d=r$. For any $k$, let $M_k$ denote the $k \times k$ matrix whose diagonal entries are $0$ and whose non-diagonal entries are $1$. Let $I_k$ denote the $k \times k$ identity matrix and let $I'_k = \mathrm{diag}(1,\dots, 1,0)$ denote the matrix obtained from $I_k$ be setting the $(k,k)$ entry to $0$.

    Suppose $d = r+1$. Then $\gamma(I'_d) = \CM(-4 M_{d-1})$. Since $-4M_{d-1}$ is an invertible $(d-1)\times(d-1)$ matrix, its Cayley-Menger matrix $\gamma(I'_d)$ is also invertible. Finally if $d = r$, then $\gamma(I_d) = 2M_d$ which is invertible. So for each $r \leq d$, we can find an example of a rank $r$ sample covariance matrix $\Sigma$ such that $\gamma(\Sigma)$ has rank $r+2$, as needed.
\end{proof}

\begin{prop}\label{prop:LowerRankSigma}
    Let $\Sigma$ be an $d \times d$ positive semidefinite matrix of rank $d-2$ with $\ones \in \ker(\Sigma)$. 
    Then $\gamma(\Sigma)$ is not strictly conditionally negative definite. 
    That is, there exists a nonzero $\bfx$ such that $\bfx^\top  \ones = 0$ and $\bfx^\top  \gamma(\Sigma) \bfx = 0.$
\end{prop}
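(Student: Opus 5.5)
Since $\Sigma$ has rank $d-2$ and $\ones\in\ker(\Sigma)$, the kernel of $\Sigma$ is two-dimensional. The plan is to take a kernel vector orthogonal to $\ones$ and check directly that it is a null direction for the quadratic form $\bfx^\top\gamma(\Sigma)\bfx$.

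First, because $\dim\ker(\Sigma)=2$ and $\ones\in\ker(\Sigma)$, we can choose a nonzero $\bfx\in\ker(\Sigma)$ with $\bfx^\top\ones=0$. Concretely, take any $\mathbf{w}\in\ker(\Sigma)\setminus\operatorname{span}(\ones)$ and set $\bfx=\mathbf{w}-\frac{\ones^\top\mathbf{w}}{d}\ones$. This vector still lies in $\ker(\Sigma)$. It is nonzero because $\mathbf{w}$ is not a multiple of $\ones$, and by construction it is orthogonal to $\ones$.

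Next, expand the quadratic form using Equation~\eqref{eq:gammaofsigma}:
\[
\bfx^\top\gamma(\Sigma)\bfx=(\bfx^\top d_\Sigma)(\ones^\top\bfx)+(\bfx^\top\ones)(d_\Sigma^\top\bfx)-2\,\bfx^\top\Sigma\bfx .
\]
The first two terms vanish because $\ones^\top\bfx=0$. The last term vanishes because $\Sigma\bfx=\zeros$. Hence $\bfx^\top\gamma(\Sigma)\bfx=0$ with $\bfx\neq\zeros$ and $\bfx\perp\ones$, so $\gamma(\Sigma)\notin\mathcal{C}^d$.

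There is no real obstacle. The only point needing care is the dimension count guaranteeing a second kernel direction independent of $\ones$, which follows from rank $d-2$ by rank--nullity. Positive semidefiniteness is not actually needed for the conclusion, only for the interpretation of $\gamma(\Sigma)$ as a (non-strictly) CND variogram. The same argument works verbatim for any rank at most $d-2$.
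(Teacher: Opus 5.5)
Your proof is correct and is essentially the paper's argument: pick a vector in the two-dimensional kernel of $\Sigma$ that is orthogonal to $\ones$, then expand $\bfx^\top\gamma(\Sigma)\bfx$ so that the rank-one terms die by $\bfx^\top\ones=0$ and the $-2\bfx^\top\Sigma\bfx$ term dies by $\Sigma\bfx=\zeros$. Your explicit projection $\bfx=\mathbf{w}-\frac{\ones^\top\mathbf{w}}{d}\ones$ and your remarks that positive semidefiniteness is unnecessary and that any rank at most $d-2$ suffices are accurate refinements.
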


\begin{proof}
    Let $\bfx \in \ker\Sigma$ be a vector which is not a multiple of $\ones$. Since $\ones \in \ker\Sigma$, we may take $\bfx$ to satisfy $\bfx^\top  \ones = 0$. Then
    \begin{align*}
        \bfx^\top  \gamma(\Sigma) \bfx &= \bfx^\top  (-2 \Sigma) \bfx + (\bfx^\top  \ones )(d_\Sigma^\top  \bfx) + (\bfx^\top  d_\Sigma) (\ones^\top  \bfx) =0,
    \end{align*}
    as needed.
\end{proof}

\begin{prop}\label{prop:CNDsubmatrix}
    Let $\Gamma$ be an $d\times d$ strictly conditionally negative definite matrix. Then every principal submatrix of $\Gamma$ is strictly conditionally negative definite.
\end{prop}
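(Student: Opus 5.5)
The plan is to prove this directly from the definition of $\mathcal{C}^d$ by extending test vectors by zero. Let $\Gamma \in \mathcal{C}^d$ and let $S \subseteq [d]$ be any index set with $\#S \geq 1$. We must show that $\Gamma_{S,S} \in \mathcal{C}^{\#S}$.

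First, $\Gamma_{S,S}$ is symmetric with zero diagonal, because it is a principal submatrix of a matrix in $\S^d_0$. So $\Gamma_{S,S} \in \S^{\#S}_0$.

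Next, take any nonzero $v \in \RR^{S}$ with $v^\top \ones_S = 0$. Define $w \in \RR^d$ by $w_S = v$ and $w_{[d]\setminus S} = \zeros$. Then $w \neq \zeros$, and $w^\top \ones = v^\top \ones_S = 0$, so $w \perp \ones$. Since all entries of $w$ outside $S$ vanish, only the $S\times S$ block contributes to the quadratic form, so
\[
v^\top \Gamma_{S,S} v = w^\top \Gamma w < 0
\]
by strict conditional negative definiteness of $\Gamma$. Hence $\Gamma_{S,S} \in \mathcal{C}^{\#S}$.

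The only point needing care is the degenerate case $\#S = 1$. Here the orthogonal complement of $\ones_S$ is zero, so the condition holds vacuously and the $1\times 1$ zero matrix lies in $\mathcal{C}^1$.

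An alternative route is geometric: via the Schoenberg criterion, $\Gamma$ is the distance matrix of the vertices of a full $(d-1)$-simplex, and any subset of those vertices spans a face of that simplex. The zero-extension argument above avoids this and is self-contained.
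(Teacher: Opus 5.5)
Your proof is correct and follows essentially the same route as the paper: the zero diagonal is inherited, and a test vector $v \perp \ones_S$ is extended by zeros to $w \perp \ones$, giving $v^\top \Gamma_{S,S} v = w^\top \Gamma w < 0$. Your remark that the case $\#S=1$ holds vacuously is a harmless extra detail that the paper leaves implicit.
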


\begin{proof}
Let $A \subseteq [d]$. We first note that since $\Gamma \in \mathcal{C}^d$ has zero diagonal, then $\Gamma_{A,A} \in \S_0^d$ too. Now let $y \in \RR^A$, $y\neq \zeros_A$ such that $y \perp \ones_A$. Add 0's to $y$ to obtain $\bar{y} \in \RR^d$. Note that we still have $\bar{y} \perp \ones$, $\bar{y}\neq \zeros$. Therefore
\[ y^\top \Gamma_{A,A} y = \bar{y}^\top \Gamma \bar{y} < 0,
\]
the last inequality holding since $\Gamma$ is strictly CND. Hence $\Gamma_{A,A}$ is also strictly CND.
\end{proof}

\end{document}